\documentclass[12pt]{article}

\usepackage{geometry}
\usepackage{amsthm, amsmath, amsfonts, amssymb, mathtools, mathrsfs}
\usepackage{array}
\usepackage{url, hyperref}
\usepackage{xcolor}
\usepackage{stmaryrd}   
\usepackage{bbm}        
\usepackage{tikz}
\usetikzlibrary{decorations.markings,arrows.meta,calc}
\usepackage{shuffle}    

\newtheorem{theorem}{Theorem}[section]
\newtheorem*{theorem*}{Theorem}
\newtheorem{lemma}[theorem]{Lemma}
\newtheorem{proposition}[theorem]{Proposition}
\newtheorem{corollary}[theorem]{Corollary}

\theoremstyle{remark}
\newtheorem{definition}[theorem]{\bf Definition}
\newtheorem{remark}[theorem]{\bf Remark}
\newtheorem{example}[theorem]{\bf Example}

\newtheorem{conjecture}[theorem]{\bf Conjecture}
\newtheorem*{acknowledgements}{\bf Acknowledgements}

\makeatletter
\newcommand{\subjclass}[2][2020]{%
  \let\@oldtitle\@title%
  \gdef\@title{\@oldtitle\footnotetext{#1 \emph{Mathematics subject classification}: #2}}%
}
\newcommand{\keywords}[1]{%
  \let\@@oldtitle\@title%
  \gdef\@title{\@@oldtitle\footnotetext{\emph{Key words and phrases}: #1}}%
}
\makeatother
\renewcommand{\ge}{\geqslant}
\renewcommand{\le}{\leqslant}

\newcommand{\Cpp}{C\nolinebreak\hspace{-.05em}\raisebox{.4ex}{\tiny\bf +}\nolinebreak\hspace{-.10em}\raisebox{.4ex}{\tiny\bf +}}

\newcommand{\Span}{\operatorname{Span}}

\newcommand{\cI}{\mathcal{I}}
\newcommand{\cZ}{\mathcal{Z}}
\newcommand{\cM}{\mathcal{M}}
\newcommand{\cB}{\mathcal{B}}
\newcommand{\cU}{\mathcal{U}}
\newcommand{\cN}{\mathcal{N}}
\newcommand{\cJ}{\mathcal{J}}

\newcommand{\wt}{\operatorname{wt}}
\newcommand{\dep}{\operatorname{dep}}

\newcommand{\LiA}{\operatorname{Li}_A}
\newcommand{\LiAS}{\operatorname{Li}_A^{\star}}

\newcommand{\bbF}{\mathbb{F}}
\newcommand{\bbZ}{\mathbb{Z}}
\newcommand{\bbQ}{\mathbb{Q}}
\newcommand{\bbR}{\mathbb{R}}

\newcommand{\Rw}{\mathscr{R}_w^{(\mathbb{F}_q)}}
\newcommand{\RwFD}{\mathscr{R}_w^{(\mathbb{F}_q[D_1])}}
\newcommand{\RwK}{\mathscr{R}_w^{(K)}}
\newcommand{\Qw}{\mathscr{Q}_w^{(\mathbb{F}_q)}}
\newcommand{\Uw}{\mathscr{U}_w^{(\mathbb{F}_q)}}
\newcommand{\Sw}{\mathscr{S}_w^{(\mathbb{F}_q)}}

\newcommand{\fs}{\mathfrak{s}}
\newcommand{\fh}{\mathfrak{h}}
\newcommand{\fm}{\mathfrak{m}}
\newcommand{\ft}{\mathfrak{t}}
\newcommand{\fu}{\mathfrak{u}}
\newcommand{\fv}{\mathfrak{v}}
\newcommand{\fw}{\mathfrak{w}}
\newcommand{\fg}{\mathfrak{g}}
\newcommand{\fa}{\mathfrak{a}}
\newcommand{\fb}{\mathfrak{b}}

\newcommand{\bS}{\boldsymbol{S}}
\newcommand{\bR}{\boldsymbol{R}}
\newcommand{\brho}{\boldsymbol{\rho}}
\newcommand{\bsigma}{\boldsymbol{\sigma}}
\newcommand{\bQ}{\boldsymbol{Q}}
\newcommand{\bzero}{\boldsymbol{0}}
\newcommand{\bU}{\boldsymbol{U}}
\newcommand{\bF}{\boldsymbol{F}}
\newcommand{\bc}{\boldsymbol{c}}
\newcommand{\bA}{\boldsymbol{A}}
\newcommand{\bB}{\boldsymbol{B}}

\newcommand{\bH}{\boldsymbol{H}}
\newcommand{\bM}{\boldsymbol{M}}
\newcommand{\bT}{\boldsymbol{T}}

\newcommand{\sfE}{\mathsf{E}}
\newcommand{\sfC}{\mathsf{C}}
\newcommand{\sfL}{\mathsf{L}}
\newcommand{\sfZ}{\mathsf{Z}}
\newcommand{\sfD}{\mathsf{D}}
\newcommand{\sfLd}{\widehat{\mathsf{L}}}

\newcommand{\Ind}{\operatorname{Ind}}
\newcommand{\gInd}{\operatorname{gInd}}
\newcommand{\Indbar}{\overline{\operatorname{Ind}}}

\newcommand{\gIKNw}{\mathcal{R}_w^{(\textup{gIKN})}}
\newcommand{\IKNw}{\mathcal{R}_w^{(\textup{IKN})}}
\newcommand{\HLw}{\mathcal{Q}_w}

\newcommand{\BHw}{\mathcal{B}_w^{(\textup{H})}}
\newcommand{\BTTw}{\mathcal{B}_w^{(\textup{TT})}}

\numberwithin{equation}{section}

\allowdisplaybreaks

\title{$\mathbb{F}_q$-linear relations among Thakur's multiple zeta values in positive characteristic}
\author{Jinyuan Hu, Hanqing Huang, Li Lai, and Kunyue Li}
\date{}
\subjclass[2020]{11M32, 11R58, 11J93.}
\keywords{Thakur's multiple zeta values, the Carlitz multiple polylogarithm values, $\bbF_q$-linear relations, quadruple-carry relations.}

\begin{document}

\maketitle

\begin{abstract}
Let $\mathcal{Z}_w^{(\mathbb{F}_q)}$ be the $\mathbb{F}_q$-linear subspace of $\mathbb{F}_q(\!(\theta^{-1})\!)$ spanned by Thakur's multiple zeta values $\zeta_A(\mathfrak{s})$ of weight $w$. 
We prove that $\sum_{w=1}^{\infty} \left(\dim_{\mathbb{F}_q} \mathcal{Z}_w^{(\mathbb{F}_q)}\right) x^w = \frac{x(1-x^q)(1-2x+x^q)}{(1-2x+x^{q+1})^2}$. 
Moreover, we construct an explicit $\mathbb{F}_q$-basis of $\mathcal{Z}_w^{(\mathbb{F}_q)}$, and prove that any $\mathbb{F}_q$-linear relation among Carlitz multiple polylogarithm values $\operatorname{Li}_A(\mathfrak{s})$ is an $\mathbb{F}_q$-linear combination of quadruple-carry relations.
This result can be regarded as an $\mathbb{F}_q$-analogue of the corresponding $\mathbb{F}_q(\theta)$-theorem proved by Chang--Chen--Mishiba and independently by Im--Kim--Le--Ngo Dac--Pham.

Our discovery of the quadruple-carry relations is inspired by the recent work of Im--Kim--Ngo Dac. 
These relations may be viewed as $\mathbb{F}_q$-analogues of the double-shuffle relations among classical multiple zeta values $\zeta(\mathfrak{s})$.
\end{abstract}

\section{Introduction}

For any non-empty set $S$, we denote by $S^{\bullet}$ the disjoint union $\{\emptyset\} \sqcup_{k=1}^{\infty} S^k$. 
Let $\bbZ_+$ be the set of positive integers, and denote by $\cI$ the set $\bbZ_+^{\bullet}$.
The elements in $\cI$ are called \emph{tuples}.
For any tuple $\fs =(s_1,\ldots,s_k) \in \cI$, the \emph{depth} $\dep(\fs)$ and \emph{weight} $\wt(\fs)$ are defined by $\dep(\fs) = k$ and $\wt(\fs)=\sum_{i=1}^{k} s_i$, with the convention $\dep(\emptyset)=0$ and $\wt(\emptyset)=0$.
Define $\cI_w = \left\{ \fs \in \cI \mid \wt(\fs) = w \right\}$ for any weight $w \ge 0$, and define $\cI_{>0} = \cI \setminus \{\emptyset\}$.
As usual, $[n]$ denotes the set $\{1,2,\ldots,n\}$ for any $n \in \bbZ_+$, and $[0]=\emptyset$.

\subsection{Classical multiple zeta values}

For any tuple $\fs = (s_1,\ldots,s_k) \in \cI_{>0}$ such that $s_1>1$, the \emph{multiple zeta value} (abbreviated as MZV) $\zeta(\fs)$ is defined by the convergent nested sum
\[
\zeta(\fs) \coloneq \sum_{\substack{n_1 > n_2 > \cdots > n_k \\ n_1,n_2,\ldots,n_k \in \bbZ_+}} \frac{1}{n_1^{s_1}n_2^{s_2}\cdots n_k^{s_k}} \in \bbR.
\]
By convention, $\zeta(\emptyset) \coloneq 1$. 
These values generalize the special values $\zeta(s)$ for integers $s > 1$ of the Riemann zeta function.

The story of MZVs dates back to Euler; he studied MZVs of depth two (double zeta values) and obtained many results, including the famous formula $\zeta(2,1)=\zeta(3)$.
In the early 1990s, the work of Hoffman \cite{Hoffman1992} and Zagier \cite{Zagier1994} sparked widespread interest in the study of MZVs.

Let $\cZ_w^{(\bbQ)}$ be the $\bbQ$-linear subspace of $\bbR$ spanned by MZVs of weight $w$. 
Among the numerous fascinating aspects of MZV research, the following conjecture stands out as one of the most prominent and attractive open problems:

\begin{conjecture}[Zagier--Hoffman]\label{conj_ZH}
For any weight $w \ge 1$, we have $\dim_{\bbQ} \cZ_w^{(\bbQ)} = d_w^{(\bbQ)}$, where the sequence $\{d_w^{(\bbQ)}\}_{w \ge 1}$ is defined by the generating function
\[
\sum_{w=1}^{\infty} d_w^{(\bbQ)} x^w = \frac{x^2(1+x)}{1-x^2-x^3}. 
\]
Moreover, $\left\{ \zeta(\fs) ~\big|~ \fs \in \BHw \right\}$ is a $\bbQ$-basis of $\cZ_w^{(\bbQ)}$, where 
\[
\BHw = \{2,3\}^{\bullet} \cap \cI_w = \left\{ (s_1,\ldots,s_k) \in \cI_w ~\big|~ k \in \bbZ_{+}, s_i \in \{2,3\} \text{ for each } i \in [k] \right\}.
\]
\end{conjecture}

Terasoma \cite{Terasoma2002}, and independently, Deligne and Goncharov \cite{DG2005}, proved that $\dim_{\bbQ} \cZ_w^{(\bbQ)} \le d_w^{(\bbQ)}$.
In 2012, Brown \cite{Brown2012} proved that $\left\{ \zeta(\fs) ~\big|~ \fs \in \BHw \right\}$ is a generating set of $\cZ_w^{(\bbQ)}$.
Thus, the algebraic part of Conjecture \ref{conj_ZH} has been resolved.
However, the transcendental part of Conjecture \ref{conj_ZH} seems out of reach.
We do not even know the $\bbQ$-linear independence of $\zeta(2,3)$ and $\zeta(3,2)$.

The most natural $\bbQ$-linear relations among MZVs are double-shuffle relations.
They arise by comparing the shuffle product with the stuffle product.
Since divergent sums such as ``$\zeta(1)$'' occur, one needs the concept of regularization to include all linear relations among MZVs.
This leads to the extended double-shuffle relations; see \cite{IKZ2006}.
Let $\mathscr{R}_w^{(\bbQ)}$ be the space of $\bbQ$-linear relations among MZVs of weight $w$.

\begin{conjecture}\label{conj_DS}
The space $\mathscr{R}_w^{(\bbQ)}$ is generated by the extended double-shuffle relations.
\end{conjecture}

For the recent progress related to Conjecture \ref{conj_DS}, see \cite{HMSW2025+}.
A systematic introduction to MZVs can be found in the monographs \cite{BF, Zhao2016}.

\subsection{Thakur's multiple zeta values in positive characteristic}

Let $A=\bbF_q[\theta]$ be the polynomial ring in one variable $\theta$ over a finite field $\mathbb{F}_q$ of characteristic $p$.
Denote by $A_+$ the set of monic polynomials in $A$.
Let $K=\bbF_q(\theta)$ be the fraction field of $A$ and $K_{\infty}=\bbF_q(\!(1/\theta)\!)$ the completion of $K$ with respect to the absolute value $|P/Q|_{\infty}=q^{\deg P-\deg Q}$ (where $P,Q \in A$ and $Q \neq 0$).

In 2004, Thakur \cite{Thakur2004} defined the multiple zeta values $\zeta_A(\mathfrak{s})$ in positive characteristic. 
For any $\fs=(s_1,\ldots,s_k) \in \cI_{>0}$, 
\[
\zeta_A(\fs) \coloneq \sum_{\substack{a_1, a_2, \ldots, a_k \in A_+ \\  \deg a_1 > \deg a_2 > \cdots > \deg a_k}} \frac{1}{a_1^{s_1}a_2^{s_2}\cdots a_k^{s_k}} \in K_{\infty}.
\]
By convention, $\zeta_A(\emptyset) \coloneq 1$.

The function field counterpart of Conjecture \ref{conj_ZH} was formulated by Todd \cite{Todd2018} and Thakur \cite{Thakur2017}.
It has been largely solved by Ngo Dac \cite{Ngo2021} in 2021, and completely solved by Im--Kim--Le--Ngo Dac--Pham \cite{IKLNP2024}, independently by Chang--Chen--Mishiba \cite{CCM2023}.
Define $\cZ_{w}^{(K)}$ to be the $K$-linear subspace of $K_\infty$ spanned by Thakur's MZVs $\zeta_A(\mathfrak{s})$ of weight $w$.

\begin{theorem}[Chang--Chen--Mishiba \cite{CCM2023}, independently Im--Kim--Le--Ngo Dac--Pham \cite{IKLNP2024}]\label{thm_CCMIKLNP}
For any weight $w \ge 1$, we have $\dim_{K} \cZ_w^{(K)} = d_w^{(K)}$, where the sequence $\{d_w^{(K)}\}_{w \ge 1}$ is defined by the generating function 
\[
\sum_{w=1}^{\infty} d_w^{(K)} x^w = \frac{x(1-x^{q-1})}{1-2x+x^{q+1}}.
\]
Moreover, $\left\{ \zeta_A(\fs) ~\big|~ \fs \in \BTTw  \right\}$ is a $K$-basis of $\cZ_w^{(K)}$, where 
\[
\BTTw = \left\{ (\fh,z) \in \cI_w ~\Big|~ \fh \in [q]^{\bullet}, z \in [q-1] \right\}.
\]
\end{theorem}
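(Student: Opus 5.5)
The natural route has two halves: an algebraic upper bound and a transcendence lower bound. First I count the candidate basis. Tuples $(\fh,z)$ with $\fh\in[q]^{\bullet}$ and $z\in[q-1]$ have generating function
\[
\frac{1}{1-x-\cdots-x^q}\cdot(x+\cdots+x^{q-1}).
\]
Multiplying numerator and denominator by $1-x$ turns this into $\frac{x(1-x^{q-1})}{1-2x+x^{q+1}}$. So it suffices to show that $\{\zeta_A(\fs)\mid \fs\in\BTTw\}$ spans $\cZ_w^{(K)}$ and is $K$-linearly independent.

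\emph{Spanning (upper bound).} I would follow the Todd--Thakur--Ngo Dac strategy and work with Carlitz multiple polylogarithm values $\LiA(\fs)$ alongside $\zeta_A(\fs)$. The two families span the same $K$-space in each weight, since each is expressed triangularly in terms of the other. The key input is Thakur's binary relations (the ``$q$-carry'' relation) in depth one and two, e.g. the relation expressing $\zeta_A(q)$-type entries through $\zeta_A(1,q-1)$-type terms. Lifting these to arbitrary depth by Ngo Dac's operators on formal words gives, for each tuple $\fs$, a rewriting rule that replaces an entry $s_i>q$, or a final entry $s_k\ge q$, by a $K$-combination of tuples that are smaller in a suitable ordering. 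A typical example is the rule $(\dots,s_i,\dots)\mapsto(\dots,s_i-q,q,\dots)$ plus lower terms. An induction on this ordering within fixed weight then shows every $\zeta_A(\fs)$ lies in the span of $\BTTw$. The point to check is that the ordering really is well-founded and that the correction terms genuinely decrease.

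\emph{Independence (lower bound).} This is the main obstacle. I would realize each $\LiA(\fs)$ as a specialization of a period of an explicit $t$-motive, namely a rigid analytic trivialization built from Anderson--Thakur polynomials and Papanikolas-type Frobenius difference equations. I would then apply the ABP criterion in the refined form of Chang, which says that $\bar K$-linear relations among such periods lift to relations of the corresponding $t$-deformation series. Given a nontrivial relation among the $\zeta_A(\fs)$ with $\fs\in\BTTw$, I would pass to a relation among the $\LiA(\fs)$ and then to a Frobenius difference system. From there I would argue by induction on depth, isolating the coefficients of the tuples with maximal depth. The restriction $z\le q-1$ on the last entry and $h_i\le q$ on the other entries is exactly what prevents these terms from combining into ``$q$-th power'' solutions of the difference equation. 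That forces the top coefficients to vanish, which gives the contradiction.

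The hardest step is this last analysis of the difference equations: showing that no nonzero rational solution exists for basis tuples. I would attack it by comparing degrees in $t$ and poles at $t=\theta^{q^n}$, in the style of Kuan--Lin and Chang--Chen--Mishiba. Combining the spanning statement with independence gives $\dim_K\cZ_w^{(K)}=|\BTTw|=d_w^{(K)}$.
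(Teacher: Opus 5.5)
This theorem is not proved in the paper. It is quoted from Chang--Chen--Mishiba and Im--Kim--Le--Ngo Dac--Pham and used as a black box: in Remark~\ref{rmk_1.6}, in Lemmas~\ref{lem_nonTTtoTT}--\ref{lem_min_tuple_not_Todd}, and for the count $\dim_K\RwK=2^{w-1}-d_w^{(K)}$ in Proposition~\ref{prop_surj}. So your proposal has to stand on its own as a reconstruction of the cited proofs.

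\textbf{Counting and spanning.} Your count of $\BTTw$ is correct. The spanning half is right in outline. In this paper's language it is Theorem~\ref{thm_MZV=CMPLV} combined with Lemma~\ref{lem_nonTTtoTT}. For $\fs\notin\BTTw$, the special carry relation $\brho_{\fs}$ has leading tuple $\fs$ and all its other tuples are $\succ\fs$ (Remark~\ref{rmk_IKN}). Termination is automatic because $\cI_w$ is finite. Two corrections:
\begin{itemize}
\item The carry relation of Lemma~\ref{lem_gIKN} pairs $(\fh,q+r,\ft)$ with $(\fh,q,r,\ft)$, not with $(\fh,r,q,\ft)$.
\item The equality of the $K$-spans of $\{\zeta_A(\fs)\}$ and $\{\LiA(\fs)\}$ in weight $w$ is a genuine theorem (Theorem~\ref{thm_MZV=CMPLV}, due to Im--Kim--Ngo Dac), not an evident triangularity. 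Only $\zeta_A(\fs)=\LiA(\fs)$ for $\fs\in[q]^{\bullet}$ is elementary. That identity is all the independence half needs, since $\BTTw\subset[q]^{\bullet}$.
\end{itemize}

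\textbf{The gap: independence.} The genuine gap is the independence half, which is the whole content of the cited works. You name the right machinery: $t$-motives whose periods give the $\LiA(\fs)$, and the ABP criterion in Chang's refined form. But the decisive step is only asserted. That step is to show that the Frobenius difference system obtained by lifting a putative relation $\sum_{\fs\in\BTTw}a_{\fs}\LiA(\fs)=0$ forces every $a_{\fs}=0$. You write ``that forces the top coefficients to vanish''. The only mechanism you offer is that the bounds $h_i\le q$ and $z\le q-1$ ``prevent $q$-th power solutions'', which is a heuristic, not an argument.

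These bounds are sharp. As soon as one entry violates them, genuine relations exist, for example $\LiA(q)+D_1\LiA(1,q-1)=0$ or $\LiA\bigl(\brho(\boxed{q+1},1)\bigr)=0$ from Lemma~\ref{lem_gIKN}. So any valid comparison of $t$-degrees and of poles at $t=\theta^{q^n}$ must visibly break down for such tuples. It therefore has to use the inequalities quantitatively, at an identified point of the induction on depth. Your proposal never locates that point. Deferring it to ``the style of Chang--Chen--Mishiba'' amounts to citing the theorem being proved. As written, the proposal is an accurate roadmap of the literature, not a proof.
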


As with many other results in function field arithmetic, the celebrated criterion of Anderson--Brownawell--Papanikolas \cite{ABP2004} is useful in the proof of Theorem \ref{thm_CCMIKLNP}.

Although there are plenty of $K$-linear relations among Thakur's MZVs, only recently have some outstanding $K$-linear relations been found by Im--Kim--Ngo Dac \cite[Proposition 4.3]{IKN2026+}.
For reasons we will explain later in Sections \ref{subsec_IKN} and \ref{subsec_Ind}, we call them special carry relations, or Im--Kim--Ngo Dac relations.
Let $\RwK$ be the space of $K$-linear relations among the Carlitz multiple polylogarithm values of weight $w$ (see definition below).

\begin{theorem}[Im--Kim--Ngo Dac \cite{IKN2026+}]\label{thm_RwK}
The special carry relations of weight $w$ constitute a $K$-basis of $\RwK$.
\end{theorem}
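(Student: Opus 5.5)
The strategy is a dimension count: show the special carry relations span $\RwK$, show they are $K$-linearly independent, and match the count against Theorem \ref{thm_CCMIKLNP}. Let $\cI_w$ index the Carlitz multiple polylogarithm values $\LiA(\fs)$ of weight $w$. Then $\RwK$ is the kernel of the $K$-linear map $K^{\cI_w}\to K_\infty$, $(c_\fs)\mapsto \sum c_\fs \LiA(\fs)$. Since the $\LiA(\fs)$ and the $\zeta_A(\fs)$ of weight $w$ span the same $K$-space $\cZ_w^{(K)}$ (by the standard triangular change of basis between Thakur MZVs and Carlitz multiple polylogarithms), we get
\[
\dim_K \RwK = |\cI_w| - d_w^{(K)} = 2^{w-1} - d_w^{(K)}.
\]
So it suffices to prove two things: (i) every special carry relation of weight $w$ is a genuine relation, which is \cite[Proposition 4.3]{IKN2026+}; and (ii) the special carry relations are $K$-linearly independent and their number equals $2^{w-1}-d_w^{(K)}$.

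For (ii), I would index the special carry relations by the tuples $\fs\in\cI_w\setminus\BTTw$. The aim is to show that each such relation has the shape
\[
\LiA(\fs) + \sum_{\ft \prec \fs} c_{\fs,\ft}\LiA(\ft) = 0
\]
for a suitable total order $\prec$ on $\cI_w$. This order should be chosen so that the "leading" tuple of each relation is the unique non-basis tuple carrying a digit that violates the Todd--Thakur shape. Concretely, that means either some entry $>q$ among the first $k-1$ positions, or a last entry $\ge q$. The carry operation replaces such an entry $s_i$ by a smaller entry and pushes the excess to later positions, and an order refining "lexicographically larger at the first disagreeing position" should make $\fs$ the maximal term. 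Once this is set up, the relations are unitriangular with respect to the non-basis tuples, hence linearly independent. Their count is $|\cI_w\setminus \BTTw|$, and a generating-function check gives
\[
\sum_w |\BTTw| x^w = \frac{x(1-x^{q-1})}{1-2x+x^{q+1}},
\]
since $[q]^\bullet$ contributes $1/(1-x-\cdots-x^q)$ and the last entry contributes $x+\cdots+x^{q-1}$.

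Combining these facts, the special carry relations form $2^{w-1}-d_w^{(K)}$ independent elements of a space of that dimension, so they are a basis. As a by-product, the triangularity also shows directly that $\BTTw$ spans $\cZ_w^{(K)}$, which is consistent with Theorem \ref{thm_CCMIKLNP}.

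The main obstacle is the triangularity step. It requires pinning down exactly which tuples occur in each special carry relation and choosing the order $\prec$ so that no other non-basis tuple appears above the leading one. I would first try the reverse-lexicographic order on tuples, comparing from the first entry, with ties broken by depth. I would then verify from the explicit formula in \cite{IKN2026+} that every other term has a strictly smaller first differing entry.
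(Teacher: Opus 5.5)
\textbf{There is a genuine gap in the triangularity step.} Your overall architecture is sound. The dimension count $\dim_K\RwK=2^{w-1}-d_w^{(K)}$ follows from Theorems \ref{thm_MZV=CMPLV} and \ref{thm_CCMIKLNP}, as in the proof of Proposition \ref{prop_surj}. The map $\fs\mapsto\brho_\fs$ is a bijection $\cI_w\setminus\BTTw\to\IKNw$. Independence then needs a triangularity argument. However, that triangularity step, which you correctly flag as the crux, fails for the order you propose. Recall
\begin{align*}
\brho\left(\fh,\boxed{q+r},\ft\right) &= (\fh,q+r,\ft) + \mathbbm{1}_{r\neq 0}(\fh,q,r,\ft) \\
&\quad + D_1\mathbbm{1}_{\fh\neq\emptyset}\left(\fh^{+},(q-1)*(r,\ft)\right) + D_1\left(\fh,1,(q-1)*(r,\ft)\right).
\end{align*}
The carry does not only push excess to later positions. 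The third group of terms increments the last entry of $\fh$, so part of the carry goes to the left.

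Comparing from the first entry, the $\fh^{+}$-terms are lexicographically \emph{larger} than $\fs=(\fh,q+r,\ft)$, since they win at position $\dep(\fh)$. The terms $(\fh,1,\dots)$, and $(\fh,q,r,\ft)$ when $r>0$, are lexicographically \emph{smaller}, since they lose at position $\dep(\fh)+1$. So whenever $\fh\neq\emptyset$, $\fs$ sits strictly in the middle. Lexicographic comparison is already total on $\cI_w$, so a depth tie-breaker does nothing. Hence no order with left-to-right lexicographic comparison as primary key, in either direction, can make $\fs$ extremal. In particular, the check you plan ("every other term has a strictly smaller first differing entry") fails on the $\fh^{+}$-terms.

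\textbf{The fix is to make depth the primary key.} Every tuple of $\brho_\fs$ other than $\fs$ has depth $\ge\dep(\fs)$. The only ones of equal depth are the sticky terms $(\fh^{+},(q-1)\widetilde{*}(r,\ft))$, or $(\fh^{+},q-1)$ when $r=0$ and $\ft=\emptyset$, and these are lexicographically larger. Thus in the (depth, lex)-order $\prec$, $\fs$ is the \emph{minimal} tuple of $\brho_\fs$ and occurs with coefficient $1$; this is Remark \ref{rmk_IKN}. Equivalently, $\fs$ is maximal for the reversed order, if you prefer your normalization.

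Since the leading tuples $s(\brho_\fs)=\fs$ are pairwise distinct, Lemma \ref{lem_min_tuple_imply_dim_lower_bound} with $L=K$ gives $K$-linear independence of $\IKNw$. Your count $\#(\cI_w\setminus\BTTw)=2^{w-1}-d_w^{(K)}$ then finishes the proof. The paper itself only cites \cite{IKN2026+} for this theorem, but Remark \ref{rmk_IKN} and the dimension computation in Proposition \ref{prop_surj} are exactly these two ingredients.
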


Let us turn to another aspect of Thakur's MZVs.
Instead of the field $K$, one may also consider the linear relations over $\bbF_q$.
Already in 2009, Thakur \cite{Thakur2009} conjectured that all Thakur MZVs are $\bbF_q$-linearly independent.
This conjecture has been disproved recently by Im--Kim--Ngo Dac \cite[Theorem A]{IKN2026+}.
They showed that  
\begin{align*}
& \zeta_A(q+2, q-1)+2 \zeta_A(3,2 q-2)+\zeta_A(q, 2, q-1)-\zeta_A(1, q, q) \\
-& \zeta_A(1,1,2 q-1)-\zeta_A(1, q-1,1, q)+\zeta_A(q+1,1, q-1)+\zeta_A(2, q-1,1, q-1) \\
+& \zeta_A(2, q, q-1)+\zeta_A(2,1,2 q-2)+\zeta_A(q, 1,1, q-1)-\zeta_A(1,1, q-1, q)=0.
\end{align*}
This renders the problem of determining all $\bbF_q$-linear relations among Thakur's MZVs an interesting one.

It turns out that the closely related notion of Carlitz multiple polylogarithm values (abbreviated as CMPLVs) is very convenient in the study of Thakur's MZVs.
In \cite{Chang2014}, Chang introduced the Carlitz multiple polylogarithm function: for a tuple $\fs =(s_1,\ldots,s_k) \in \cI_{>0}$, 
\[
\operatorname{Li}_{\fs}(z_1,z_2,\ldots,z_k) \coloneq \sum_{\substack{n_1>n_2>\cdots>n_k \\ n_1,n_2,\ldots,n_k \in \bbZ_{\geqslant 0}}}  \frac{ z_1^{q^{n_1}} z_2^{q^{n_2}} \cdots z_k^{q^{n_k}} }{\ell_{n_1}^{s_1} \ell_{n_2}^{s_2} \cdots \ell_{n_k}^{s_k}}
\]
for multiple variables $z_1,\ldots,z_k$ in a certain range, where $\ell_n = \prod_{i=1}^{n} (\theta-\theta^{q^i})$ for $n \in \bbZ_+$ and $\ell_0=1$.
We will only use the values of CMPLs at $(z_1,\ldots,z_k)=(1,\ldots,1)$.
Therefore, it is convenient to define
\[
\LiA(\fs) \coloneq \sum_{\substack{n_1>n_2>\cdots>n_k \\ n_1,n_2,\ldots,n_k \in \bbZ_{\ge 0}}}  \frac{ 1 }{\ell_{n_1}^{s_1} \ell_{n_2}^{s_2} \cdots \ell_{n_k}^{s_k}}.
\]
We also define the Carlitz multiple polylogarithm star values (abbreviated as CMPLSVs)
\[
\LiA^{\star}(\fs) \coloneq \sum_{\substack{n_1\geqslant n_2 \geqslant \cdots \geqslant n_k \\ n_1,n_2,\ldots,n_k \in \bbZ_{\geqslant 0}}}  \frac{ 1 }{\ell_{n_1}^{s_1} \ell_{n_2}^{s_2} \cdots \ell_{n_k}^{s_k}}.
\]
By convention, $\LiA(\emptyset)=\LiAS(\emptyset) \coloneq 1$.

Define $\cZ_{w}^{(\bbF_q)}$ to be the $\bbF_q$-linear subspace of $K_\infty$ spanned by Thakur's MZVs $\zeta_A(\mathfrak{s})$ of weight $w$.
The following theorem by Im--Kim--Ngo Dac connects Thakur's MZVs and CMPLVs.

\begin{theorem}[{Im--Kim--Ngo Dac \cite[Theorem 3.1]{IKN2026+}}]\label{thm_MZV=CMPLV}
We have
\[
\cZ_w^{(\bbF_q)} = \Span_{\bbF_q} \left\{ \LiA(\fs) \mid \fs \in \cI_w \right\}.
\]
\end{theorem}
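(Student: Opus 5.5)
The plan is to prove the two inclusions separately. Both rest on expressing power sums over monic polynomials of fixed degree through the Carlitz factorials and the $\ell_n$. For $d \ge 0$ and $s \ge 1$, set
\[
S_d(s) = \sum_{a \in A_+,\ \deg a = d} a^{-s},
\]
so that
\[
\zeta_A(s_1,\ldots,s_k) = \sum_{d_1>\cdots>d_k \ge 0} S_{d_1}(s_1)\cdots S_{d_k}(s_k).
\]
The key input is the classical formula of Carlitz and Thakur for $S_d(s)$. For $1 \le s \le q$ one has $S_d(s) = 1/\ell_d^{s}$; this is the Carlitz identity $S_d(1)=1/\ell_d$, combined with the fact that $S_d(s)=S_d(1)^s$ for $s\le q$. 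For larger $s$, Anderson--Thakur polynomials, or equivalently the recursive "carry" expansion, write $S_d(s)$ as an $\bbF_q$-linear combination of products
\[
\frac{1}{\ell_d^{s'}}\,S_{<d}(\text{tuple}),
\]
with coefficients in $\bbF_q$ rather than $K$. Concretely, I would use the identity $S_d(s) = \ell_d^{-s}\sum(\ldots)$ obtained by writing $a = \theta^d + b$ with $\deg b < d$ and expanding $(1+ b/\theta^d)^{-s}$.

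\textbf{Inclusion $\subseteq$.} I would induct on the weight, and within a fixed weight on the depth and on the first index. The aim is an $\bbF_q$-linear "carry" formula of the shape
\[
S_d(s) = \sum_j c_j\, \frac{1}{\ell_d^{s_j}} \sum_{d>e_1>\cdots} \frac{1}{\ell_{e_1}^{t_1}\cdots},
\qquad c_j\in\bbF_q,
\]
in which each term has total weight $s$ and a leading exponent $s_j\le q$. The natural route is Thakur's relation $S_d(s)$ versus $S_d(s-(q-1))$ via $\ell_d = (\theta-\theta^{q^d})\ell_{d-1}$. Substituting this formula into the nested sum rewrites $\zeta_A(\fs)$ as an $\bbF_q$-combination of $\LiA(\ft)$ with $\wt(\ft)=w$. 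The crucial point is that every coefficient is a rational integer reduced mod $p$. This holds because the expansion only uses binomial coefficients together with the identity $\sum_{\deg b<d} b^{m}$ expressed through $S$'s of smaller degree.

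\textbf{Inclusion $\supseteq$.} I would invert the same triangularity. Order tuples of weight $w$ by depth, and then lexicographically. The expansion above expresses $\zeta_A(\fs)$ as $\LiA(\fs)$ plus an $\bbF_q$-combination of $\LiA(\ft)$ with $\ft$ strictly larger in that order (typically of larger depth), with leading coefficient $1$ when all $s_i\le q$ and a suitably normalized leading term in general. Since $\cI_w$ is finite, this unitriangular system inverts over $\bbF_q$, which gives each $\LiA(\fs)$ as an $\bbF_q$-combination of the $\zeta_A(\ft)$.

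The main obstacle is proving the integrality claim: that $S_d(s)$ for $s>q$ expands with $\bbF_q$ coefficients purely into terms $\ell_d^{-s'}\ell_{e_1}^{-t_1}\cdots$ with $d>e_1>\cdots$ and total weight exactly $s$, with the correct leading term for triangularity. I would first attempt this through the generating function $\sum_s S_d(s)x^{s}$ and the Carlitz exponential/logarithm identity $\sum_{a} \frac{1}{z-a}$ over degree-$d$ monics. If that becomes unwieldy, I would fall back on a direct induction on $d$ using $S_d(s) = \sum_{\deg b<d}(\theta^d+b)^{-s}$ together with the depth-one case verified for $q<s\le 2q-1$.
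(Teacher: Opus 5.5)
The paper gives no proof of this statement; it quotes it from Im--Kim--Ngo Dac (cf.\ Remark~\ref{rmk_1.6}), so your outline must stand on its own. The architecture is sensible: expand $S_d(s)$ in depth one, substitute into the nested sum, then invert a unitriangular system. However, the step that carries all the content is left open, and the reason you give for it is wrong. Expanding $(1+b/\theta^d)^{-s}$ yields $S_d(s)=\sum_{n\ge0}\binom{-s}{n}\theta^{-d(s+n)}\sum_{\deg b<d}b^n$, whose coefficients lie in $K$ and are not $\bbF_p$-multiples of $\ell$-products. The positive power sums $\sum_{\deg b<d}b^n$ are polynomials, not $S$'s, and Anderson--Thakur polynomials have coefficients in $A$.

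Your generating-function fallback is the right starting point, but an honest computation produces $d$-dependent coefficients. Write $e_d(z)=\prod_{\deg b<d}(z-b)=\sum_i\alpha_i z^{q^i}$. Additivity of $e_d$ together with $\sum_b (z-b)^{-1}=\alpha_0/e_d(z)$ gives
\[
S_d(n+1)=\sum (-1)^{n+m}\binom{m}{m_0,m_1,\ldots}\,\ell_d^{-(m+1)}\prod_{i\ge 1}\Bigl(\frac{\alpha_i}{\alpha_0}\Bigr)^{m_i},\qquad \textstyle\sum_i m_i q^i=n,\ m=\sum_i m_i .
\]
For instance, $S_d(q+1)=\ell_d^{-(q+1)}-T_d\,\ell_d^{-2}$ with $T_d=L_d/(D_1L_{d-1}^{q})$ and $L_j=\prod_{i=1}^{j}(\theta^{q^i}-\theta)$; here $T_d\notin\bbF_q$.

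The missing idea is that these ratios are themselves carries. One has $T_1=1$ and $T_{j+1}-T_j=L_j^{1-q}=\ell_j^{1-q}$, so $T_d=\sum_{j<d}\ell_j^{-(q-1)}$. In general, up to sign, $\alpha_i/\alpha_0=\operatorname{Si}_{<d}(q-1,q^2-q,\ldots,q^i-q^{i-1})$, where $\operatorname{Si}_{<d}(\ft)=\sum_{d>e_1>\cdots>e_k\ge 0}\ell_{e_1}^{-t_1}\cdots\ell_{e_k}^{-t_k}$. You then need the stuffle product $\operatorname{Si}_{<d}(\fa)\operatorname{Si}_{<d}(\fb)=\operatorname{Si}_{<d}(\fa*\fb)$, which is also required, but never mentioned, when you substitute into the nested sum. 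With both ingredients you get
\[
S_d(s)=\operatorname{Si}_d(s)+\sum_{\dep(\ft)\ge 2,\ \wt(\ft)=s}c_{\ft}\operatorname{Si}_d(\ft),\qquad c_{\ft}\in\bbF_p,
\]
where $\operatorname{Si}_d(\ft)=\ell_d^{-t_1}\operatorname{Si}_{<d}(t_2,\ldots,t_k)$. For example, $S_d(q+1)=\operatorname{Si}_d(q+1)-\operatorname{Si}_d(2,q-1)$.

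Two auxiliary claims are also false. First, requiring every term to have leading exponent $\le q$ is impossible: $S_0(s)=1$ while $\operatorname{Si}_0(\ft)=0$ for $\dep(\ft)\ge 2$, so $\operatorname{Si}_d(s)$ itself must occur with coefficient $1$. Second, triangularity fails for the (depth, lex)-order. The expansion gives
\[
\zeta_A(q+1,1)=\LiA(q+1,1)-\LiA(2,q-1,1)-\LiA(2,1,q-1)-\LiA(2,q),
\]
and $(2,q)\prec(q+1,1)$, since it has the same depth and is lexicographically smaller. What is true is unitriangularity for the plain lexicographic order on $\cI_w$, with $\LiA(\fs)$ as the \emph{largest} term. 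This is a total order on $\cI_w$, since no tuple of weight $w$ is a proper prefix of another. Non-leading terms of $S_d(s_1)$ have depth $\ge 2$, hence first entry $<s_1$, and induction on depth handles the tail $S_{<d}(s_2,\ldots,s_k)$. With the carry identities and this corrected order, both inclusions follow, even over $\bbF_p$.
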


\begin{remark}\label{rmk_1.6}
Theorem \ref{thm_MZV=CMPLV} also holds if we replace $\bbF_q$ by $\bbF_p$, and there exists an algorithm by Im--Kim--Ngo Dac to express each Thakur's MZV as an $\bbF_p$-linear combination of CMPLVs, and vice versa.

Moreover, we have $\zeta_A(\fs)=\LiA(\fs)$ for any $\fs \in [q]^{\bullet}$ (see \cite[\S 3.1--3.3]{Thakur2009}). 
In particular, Theorem \ref{thm_CCMIKLNP} implies that 
\[
\left\{ \theta^{k} \LiA(\ft) ~\Big|~ k \in \bbZ_{\ge 0}, \ft \in \BTTw \right\}
\]
is an $\bbF_q$-linearly independent set.
\end{remark}

\subsection{Main results in this paper}
We completely solve the problem of $\bbF_q$-linear relations among Thakur's MZVs.
Our main results are the following Theorems \ref{thm_Z} and \ref{thm_R}.
Let $\Rw$ be the space of $\bbF_q$-linear relations among CMPLVs of weight $w$.

\begin{theorem}\label{thm_Z}
For any weight $w \ge 1$, we have $\dim_{\bbF_q} \cZ_w^{(\bbF_q)} = d_w$, where the sequence $\{d_w\}_{w \geqslant 1}$ is defined by the generating function 
\[
\sum_{w=1}^{\infty} d_w x^w = \frac{x(1-x^q)(1-2x+x^q)}{(1-2x+x^{q+1})^2}.
\]
Moreover, $\left\{ \LiA^{\star}(\fs) ~\big|~ \fs \in \cB_w  \right\}$ is an $\bbF_q$-basis of $\cZ_w^{(\bbF_q)}$, where $\cB_w = \cB \cap \cI_w$ and
\[
\cB = [q]^{\bullet} \bigsqcup \left\{ (\fh,m,\ft) \in \cI ~\Big|~ \fh,\ft \in [q]^{\bullet}, m \in [2q] \setminus [q] \right\}.
\]
\end{theorem}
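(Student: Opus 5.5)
The plan is to prove the two halves of the basis statement separately and then obtain the dimension formula by counting $\cB_w$. By Theorem \ref{thm_MZV=CMPLV}, $\cZ_w^{(\bbF_q)}$ is the $\bbF_q$-span of the $\LiA(\fs)$ with $\fs\in\cI_w$. The relation between $\LiA$ and $\LiAS$ is unitriangular with integer coefficients: $\LiAS(\fs)$ is $\LiA(\fs)$ plus a sum of $\LiA$ of contractions of $\fs$, where a contraction merges adjacent entries by adding them. Inverting this relation with inclusion–exclusion gives
\[
\cZ_w^{(\bbF_q)}=\Span_{\bbF_q}\{\LiAS(\fs)\mid \fs\in\cI_w\}.
\]
So it suffices to show three things: every $\LiAS(\fs)$ reduces to the set $\{\LiAS(\fs)\mid\fs\in\cB_w\}$; that set is $\bbF_q$-linearly independent; and $\sum_w |\cB_w|x^w$ equals the stated rational function.

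\emph{Counting.} Tuples in $[q]^\bullet$ have generating function $1/(1-(x+\cdots+x^q))=(1-x)/(1-2x+x^{q+1})$. A single middle entry $m\in[2q]\setminus[q]$ contributes $x^{q+1}(1-x^q)/(1-x)$. Hence
\[
1+\sum_{w\ge1}|\cB_w|x^w=\frac{1-x}{D}+\frac{x^{q+1}(1-x^q)(1-x)}{D^2},\qquad D=1-2x+x^{q+1}.
\]
Subtracting $1$ and simplifying over the common denominator $D^2$ should give $x(1-x^q)(1-2x+x^q)/D^2$; this is a routine polynomial identity.

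\emph{Spanning.} Here I would build $\bbF_q$-linear "carry" identities from the structure of $\ell_n$. The basic identity is $\ell_{n}=\ell_{n-1}(\theta-\theta^{q^{n}})$, which I would combine with the Frobenius twist $\ell_n^q=\prod_{i=1}^n(\theta^q-\theta^{q^{i+1}})$ and the decomposition $\theta-\theta^{q^{i+1}}=(\theta-\theta^q)+(\theta^q-\theta^{q^{i+1}})$. The point is to write a term $1/\ell_n^{s}$ with $s>q$ as an $\bbF_q$-combination, after telescoping in $n$, of terms in which the excess weight $s-q$ is carried to neighbouring indices, while no $\theta$-coefficients survive. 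The surviving coefficients can be removed because the relations come in matched packets of four; these are the quadruple-carry relations suggested by the Im--Kim--Ngo Dac special carry relations. Using these relations, I would set up a reduction algorithm that decreases a suitable order, for instance (number of entries $>q$, then the sum of the excesses, then lexicographic position). The algorithm should push every entry $>2q$ down and merge any two entries $>q$, until at most one entry exceeds $q$ and that entry is at most $2q$. That is exactly the shape of $\cB$.

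\emph{Independence.} I would expand each $\LiAS(\fs)$ with $\fs\in\cB_w$ as a $K$-linear, in fact $A$-linear, combination of the values $\LiA(\ft)$ with $\ft\in\BTTw$. This is possible by Theorem \ref{thm_CCMIKLNP} together with $\zeta_A=\LiA$ on $[q]^\bullet$ (Remark \ref{rmk_1.6}). The aim is to show that the leading terms, meaning the highest power $\theta^k$ times a distinguished $\ft$, are pairwise distinct across $\cB_w$. For $\fs\in[q]^\bullet$ the leading term should come from the last entry being converted into the form $(\fh,z)$ with $z\in[q-1]$. For $\fs=(\fh,m,\ft)$ the entry $m=q+j$ should produce a factor $\theta$ multiplying a tuple in which $m$ is split. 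Once the leading terms are distinct, the $\bbF_q$-independence of $\{\theta^k\LiA(\ft)\}$ from Remark \ref{rmk_1.6} gives independence by a triangularity argument.

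The main obstacle I expect is this last step: finding an order on $\cB_w$ and an injective leading-term map into $\bbZ_{\ge0}\times\BTTw$. The count forces it to be tight, because $d_w$ is roughly "two copies" of $d_w^{(K)}$ (the denominator $D^2$), so the map must use both $k=0$ and $k=1$ efficiently. I would first try to verify the map for small $q$ and $w$ (say $q=2$ and $w\le 6$) by explicit expansion, and use those cases to guess the general pattern. If a clean injection fails, the fallback is to get the upper bound $\dim\le|\cB_w|$ from spanning and the lower bound by exhibiting $|\cB_w|$ independent elements of the form $\theta^k\LiA(\ft)$ with $k\in\{0,1\}$ inside $\cZ_w^{(\bbF_q)}$.
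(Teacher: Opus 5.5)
Your count of $\cB_w$ and the passage from $\LiA$ to $\LiAS$ via $\sfZ$ are correct. The spanning step also points the right way, though you would still have to exhibit the relations and check triangularity for your proposed order. The paper does this in Lemma~\ref{lem_last}: it shows that each tuple outside $\cB$, i.e.\ each $(\fh,{}^{q+}\fm^{+q},\ft)$, is the $\triangleleft$-minimal tuple of $\sfZ^{-1}$ applied to a quadruple-carry relation.

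Spanning, however, only gives $\dim_{\bbF_q}\cZ_w^{(\bbF_q)}\le d_w$, and the independence half has a genuine gap. Your heuristic that $d_w$ is ``roughly two copies'' of $d_w^{(K)}$ is false. The squared denominator gives a double pole at $x=1/\lambda$, so $d_w\sim c\,w\lambda^w$ while $d_w^{(K)}\sim c'\lambda^w$. Already for $q=2$ and $w=4$, the generating functions give $d_4=8$ but $d_4^{(K)}=3$. By Lemma~\ref{lem_nonTTtoTT} and Remark~\ref{rmk_1.6}, $\cZ_w^{(\bbF_q)}$ sits inside $\bigoplus_{\ft\in\BTTw}\bbF_q[D_1]\LiA(\ft)$ with unique coordinates. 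Eight $\bbF_2$-independent vectors in $\bbF_2[D_1]^3$ cannot all have $D_1$-degree $\le 1$, and in general the degrees needed grow with $w$. So no injective leading-term map into $\{0,1\}\times\BTTw$ exists. Your fallback fails for the same counting reason, and besides, $\theta\LiA(\ft)$ with $\ft\in\BTTw$ is not even in $\cZ_w^{(\bbF_q)}$, since $\theta\notin\bbF_q[D_1]$.

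What your plan lacks is any way to show that the relations used for spanning exhaust all relations, i.e.\ an upper bound on $\dim_{\bbF_q}\Rw$. Analysing the Todd--Thakur expansions of the $\LiAS(\fs)$ directly, with $D_1$-degrees growing through iterated carries, gives no handle on this. The paper never computes those expansions. Instead it proves $\Rw=\Qw$ in three steps:
\begin{itemize}
\item $\gInd(\bzero)=\Qw$, by cancelling square boundaries in increasing order.
\item $\gInd$ is surjective. This uses $\dim_K\RwK=2^{w-1}-d_w^{(K)}$, the bound $\dim\Uw\ge 2^{w-1}-d_w^{(K)}$, and a comparison of top $D_1$-degree terms of determinants.
\item Nilpotence of $\Ind$ then forces $\Rw/\Qw=0$.
\end{itemize}
Separately, it proves $\dim\Qw\le 2^{w-1}-d_w$ from the cube and six-family identities together with the basis $\cJ_w$. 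With these two facts, independence of $\{\LiAS(\fs)\}_{\fs\in\cB_w}$ reduces to the dimension count $\Sw=\Qw\oplus\Span_{\bbF_q}\{\sfZ\fs\mid \fs\in\cB_w\}$, with no expansion needed. Some such global control of $\Rw$, with the $K$-linear theorem as input, is the missing idea.
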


\begin{theorem}[Theorem \ref{thm_R2}]\label{thm_R}
The $\bbF_q$-linear space $\Rw$ is generated by quadruple-carry relations of weight $w$.
\end{theorem}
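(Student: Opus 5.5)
The plan is to compare two dimension counts. Let $V_w$ be the $\bbF_q$-vector space with basis the formal symbols $[\fs]$, $\fs \in \cI_w$, and let $\pi: V_w \to K_\infty$ send $[\fs] \mapsto \LiA(\fs)$. By definition $\Rw = \ker \pi$, and by Theorem \ref{thm_MZV=CMPLV} the image of $\pi$ is $\cZ_w^{(\bbF_q)}$. Write $Q_w \subseteq V_w$ for the span of the quadruple-carry relations of weight $w$. First we check that each quadruple-carry relation actually holds, so that $Q_w \subseteq \Rw$. These relations should come from the elementary identities behind the Im--Kim--Ngo Dac special carry relations, with the $K$-coefficients (powers of $\theta$ or $\ell_1$) removed by combining four carries. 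Since $\dim V_w = |\cI_w| = 2^{w-1}$, Theorem \ref{thm_R} will follow once we show
\[
\dim_{\bbF_q} V_w/Q_w \le d_w \quad\text{and}\quad \dim_{\bbF_q}\cZ_w^{(\bbF_q)} \ge d_w .
\]
Indeed, these give $\dim V_w/\Rw = \dim \cZ_w^{(\bbF_q)} \ge d_w \ge \dim V_w/Q_w$, and together with $Q_w \subseteq \Rw$ this forces $Q_w = \Rw$. The same argument also yields Theorem \ref{thm_Z}.

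\emph{Upper bound (spanning).} Pass from $\LiA$ to $\LiAS$ by the usual triangular change of basis; it is weight-preserving and unitriangular with respect to depth, so it is harmless. Then show that modulo $Q_w$ every $\LiAS(\fs)$ reduces to a combination of $\LiAS(\ft)$ with $\ft\in\cB_w$. Define an order on tuples, say by the number of entries exceeding $q$, then by the total excess, then lexicographically. Given an $\fs$ with an entry $m>2q$, or with two entries exceeding $q$, pick a suitable quadruple-carry relation whose leading term is $\fs$. It lets us "carry" part of that entry into its neighbours and so expresses $\fs$ through smaller tuples. Separately, count $|\cB_w|$: a tuple in $[q]^\bullet$ has generating function $1/(1-x-\dots-x^q)$, and $(1-x)/(1-2x+x^{q+1})$ is the same expression. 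Multiplying by $(x^{q+1}+\dots+x^{2q})$ for the middle entry $m$ and squaring the $[q]^\bullet$ factor, one should recover exactly the stated rational function.

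\emph{Lower bound (independence).} This is the main obstacle. We use Remark \ref{rmk_1.6}: the set $\{\theta^k\LiA(\ft) : k\ge0,\ \ft\in\BTTw\}$ is $\bbF_q$-linearly independent. So it suffices to expand each $\LiAS(\fs)$, $\fs\in\cB_w$, as an $A$-combination of the $\LiA(\ft)$, $\ft\in\BTTw$, and then check that these coefficient vectors are $\bbF_q$-independent in $A^{\BTTw}$. For $\fs\in[q]^\bullet$ this is the classical reduction, using $\zeta_A=\LiA$ on $[q]^\bullet$ and the relation $\LiA(\fh,q)\sim$ carries, which produce $\ell_1$-factors. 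For $(\fh,m,\ft)$ with $q<m\le 2q$, the single carry gives one factor of $\ell_1$-type, i.e. a term of $\theta$-degree $1$ in an essentially unique position. The plan is to define a leading term map $\cB_w\to\BTTw\times\{0,1,\dots\}$, sending each basis element to the pair ($\theta$-degree, target tuple) of its top coefficient, and to prove this map is injective. This would give a triangular, hence invertible, system. The delicate part is controlling the cascade of carries when $\ft$ ends in $q$ or when $\fh$ and $\ft$ interact. I would first try to treat the top $\theta$-degree coefficients as a linear map modulo lower degrees and reduce injectivity to a combinatorial bijection.
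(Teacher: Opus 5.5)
\textbf{The independence half is the whole difficulty, and it is not proved.} Your dimension sandwich is logically sound: since $\Qw\subseteq\Rw$ (Lemma \ref{lem_HL}), the two inequalities would force $\Rw=\Qw$. The spanning half is also what the paper does with minimal-tuple arguments. See Proposition \ref{prop_half}, and, for the complement of $\cB_w$ in the $\LiAS$-picture, Lemma \ref{lem_last}, which uses the specific relations $\bQ(\sfZ\fh,\boxed{{}^{q+}\sfZ\fm},\boxed{{}^{q+}\sfZ\ft})$ and the right-to-left order $\triangleleft$. Your ``suitable relation'' and order are unspecified, but that part is repairable.

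The genuine gap is the independence half, $\dim_{\bbF_q}\cZ_w^{(\bbF_q)}\ge d_w$. Combined with the spanning half it already implies the theorem, so it carries all of the difficulty, and you give only a plan for it. That plan fails as stated, already for $q=2$, $w=3$, where $\cB_3=\cI_3$ and $d_3=4$. Over $\bbF_2$ the carry relations $\brho(1,\boxed{2})$ and $\brho(\boxed{3})$ give $\LiA(1,2)=D_1\bigl(\LiA(2,1)+\LiA(1,1,1)\bigr)$ and $\LiA(3)=\LiA(2,1)+D_1\LiA(1,2)=(1+D_1^2)\LiA(2,1)+D_1^2\LiA(1,1,1)$. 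Each $\LiAS(\fs)$ with $\fs\in\cI_3$ contains $\LiA(3)$ (the full coarsening), and its other constituents have $D_1$-degree at most $1$. Hence $\LiAS(3)$, $\LiAS(2,1)$, $\LiAS(1,2)$, $\LiAS(1,1,1)$ all share the top coefficient $D_1^2\bigl(\LiA(2,1)+\LiA(1,1,1)\bigr)$, even though they are $\bbF_2$-independent. So no injective top-degree leading-term map exists. Moreover, a carry on an entry $q<m\le 2q$ does not produce a single factor of $D_1$: here $(3)$ carries to $(1,2)$, which must be carried again. Passing to $\LiAS$ also mixes in all coarsenings, so no triangularity is visible in top coefficients.

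\textbf{How the paper avoids this.} The paper never proves the independence directly; it proves $\Rw=\Qw$ structurally.
\begin{enumerate}
\item The Im--Kim--Ngo Dac reduction defines $\Ind\colon\Rw\to\Rw$, which strictly raises the $\prec$-minimal tuple and is therefore nilpotent (Lemma \ref{lem_Ind_nilp}).
\item The ambiguity created by allowing arbitrary carry relations in that reduction is exactly $\Qw$ (Proposition \ref{prop_gInd0}, the square-boundary argument, with Lemma \ref{lem_gIndQ}). So $\Ind$ descends to $\Indbar$ on $\Rw/\Qw$.
\item $\Indbar$ is surjective (Proposition \ref{prop_surj}). The transcendence input enters only here, through $\dim_K\RwK=2^{w-1}-d_w^{(K)}$ (Theorem \ref{thm_CCMIKLNP}), the bound $\dim_{\bbF_q}\Uw\ge 2^{w-1}-d_w^{(K)}$, and a comparison of top $D_1$-degree terms of determinants.
\end{enumerate}
A nilpotent surjective endomorphism of a finite-dimensional space forces $\Rw/\Qw=0$. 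Only afterwards is $\dim\cZ_w^{(\bbF_q)}=d_w$ deduced, from $\Rw=\Qw$ together with $\dim\Qw\le 2^{w-1}-d_w$. That upper bound itself needs the cube identity and the six-family-$\bQ$ identity (Proposition \ref{prop_other_half}).

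To complete your route you would need either this mechanism or a genuine computation of the $\bbF_q$-rank of the $\bbF_q[D_1]$-coefficient vectors of the $\LiAS(\fs)$, $\fs\in\cB_w$, with respect to $\BTTw$. As written, the proposal does not prove the statement.
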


We will define the quadruple-carry relations in Section \ref{subsec_HL}.
As our result indicates, these quadruple-carry relations are the most natural $\mathbb{F}_q$-linear relations among CMPLVs.
Moreover, they are analogous in spirit to the double-shuffle relations among classical MZVs, since they arise from certain double-counting arguments. 

\begin{remark}
Note that Theorem \ref{thm_Z} implies 
\[
\dim_{\bbF_q} \cZ_w^{(\bbF_q)} \sim c w \lambda^w
\]
as $w \to \infty$ with $q$ fixed, for some constant $c>0$ depending only on $q$, where $\lambda$ is the unique real root in the interval $(1,2)$ of the polynomial $x^q-x^{q-1}-x^{q-2}-\cdots-1$.
Comparing this with the corollary of Theorem \ref{thm_CCMIKLNP}:
\[
\dim_{K} \cZ_w^{(K)} \sim c^\prime \lambda^w,
\]
we find that $\dim_{\bbF_q} \cZ_w^{(\bbF_q)}$ is not exponentially larger than $\dim_{K} \cZ_w^{(K)}$, which is quite unexpected.
\end{remark}

\begin{remark}
Our proof still works (with necessary minor modifications) if we replace $\bbF_q$ by any subfield $L$ of $\bbF_q$.
That is, if we denote by $\cZ_w^{(L)}$ the $L$-linear subspace of $K_{\infty}$ spanned by all $\zeta_A(\fs)$ of weight $w$, then for any weight $w \geqslant 1$, 
\[
\dim_{L} \cZ_w^{(L)} = d_w
\]
is the same as $\dim_{\bbF_q} \cZ_w^{(\bbF_q)}$. 
\end{remark}

\noindent\textbf{Statement on contribution.} The first and last authors contributed primarily to this work; they discovered the quadruple-carry relations. 
The individual contribution proportions for all four authors are detailed as follows: J. Hu: 40\%, H. Huang: 5\%, L. Lai: 10\%, K. Li: 45\%.

\noindent\textbf{Statement on AI use.} All the main ideas were developed by the authors.
The paper was written by the authors.
ChatGPT (GPT-5.6 Sol, OpenAI) was used in the following ways:
\begin{itemize}
\item to write a {\Cpp} program to compute examples of weight-$w$ $\bbF_q$-linear relations for some $q \leqslant 7$ and $w \leqslant 20$;
\item to discover the six-family-$\bQ$ identity (Lemma \ref{lem_6Q}), which is a crucial ingredient for the dimension upper bound of the space $\Qw$ generated by quadruple-carry relations (Proposition \ref{prop_other_half});
\item to construct a more elegant basis of $\cZ_w^{(\bbF_q)}$ (Theorem \ref{thm_Z2}) than the first basis constructed by the authors. 
\item to verify and simplify other proofs made by the authors;
\end{itemize}
All mathematical statements, proofs, computations, and verifications contributed by ChatGPT (GPT-5.6 Sol, OpenAI) mentioned above were subsequently examined, rigorously checked, and rewritten by the authors, who take full responsibility for the results.

\begin{acknowledgements}
This work was completed during the Algebra and Number Theory Summer School held at Peking University, organized by Shouwu Zhang and Liang Xiao, from July 13 to August 21, 2026.
We are grateful to Peking University for its outstanding research environment and support.
L.L. is supported by Research Foundation for Scholars of Xiamen University (Grant No. X2450218).
\end{acknowledgements}

\subsection{Structure of this paper and the notation list}

The structure of this paper is as follows.
In Section \ref{sec_HL}, we recall carry relations and introduce quadruple-carry relations.
Sections \ref{sec_Ind}--\ref{sec_Rw=Qw} prove that every $\bbF_q$-linear relation among CMPLVs is generated by quadruple-carry relations.
In Sections \ref{sec_7} and \ref{sec_8}, we use the method of minimal tuples and two identities among quadruple-carry relations to determine the dimension of $\Qw$.
Finally, Section \ref{sec_9} identifies a basis of $\cZ_w^{(\bbF_q)}$ and proves Theorem \ref{thm_Z}.

\vspace{3 mm}
\noindent\textbf{Notation}: 
\begin{align*}
\bbF_q &= \text{the finite field with $q$ elements, for $q$ a power of a prime number $p$}; \\
A &=\bbF_q[\theta] = \text{the polynomial ring in the variable $\theta$ over $\bbF_q$}; \\
A_{+} &= \text{the set of monic polynomials in $A$}; \\
\bbZ_{+} &= \text{the set of positive integers}; \\
D_1 &= \theta^q - \theta \in A_{+}; \\
K &= \bbF_q(\theta) = \text{the fraction field of $A$}; \\
K_{\infty} &= \bbF_q(\!(1/\theta)\!) = \text{the completion of $K$ with respect to the place at infinity}; \\
\cI &= \bbZ_{+}^{\bullet} = \{\emptyset\} \sqcup \bigsqcup_{k=1}^{\infty} \bbZ_{+}^{k} = \text{the set of tuples};\\
\cI_w &= \left\{ \fs \in \cI \mid \wt(\fs) = w \right\} = \text{the set of tuples of weight $w$}; \\ 
\zeta_A(\fs) &= \text{Thakur's multiple zeta value associated to the tuple $\fs \in \cI$}; \\
\LiA(\fs) &= \text{the Carlitz multiple polylogarithm associated to the tuple $\fs \in \cI$}; \\
\cZ_w^{(L)} &= \Span_{L} \left\{ \zeta_A(\fs) \mid \fs \in \cI_w \right\} = \text{the $L$-linear subspace of $K_{\infty}$ spanned by Thakur's}\\
&\quad\ \text{multiple zeta values $\zeta_A(\fs)$ of weight $w$, for $L = \bbF_q$ and $L=K$}; \\
\mathscr{S}_w^{(R)} &= \left\{ \bS=\sum_{\fs \in \cI_w} c_{\fs} \cdot \fs ~\Big|~ c_{\fs} \in R \right\} = \text{the $R$-module consisting of formal sums of }\\
&\quad\ \text{tuples of weight $w$, for $R = \bbF_q$, $R=\bbF_q[D_1]$, and $R=K$}; \mathscr{S}^{(R)} = \bigoplus_{w=0}^{\infty} \mathscr{S}_w^{(R)}; \\
\mathscr{R}_w^{(R)} &= \left\{ \bR \in \mathscr{S}_w^{(R)} \mid \LiA(\bR) = 0 \right\} = \text{the $R$-module consisting of $R$-linear relations} \\ 
&\quad	\text{among Carlitz multiple polylogarithm values $\LiA(\fs)$ of weight $w$}; \\
\prec &= \text{the (depth, lex)-order}; \triangleleft = \text{the (depth, $\overleftarrow{\text{lex}}$)-order}; \\
s(\bS) &= \text{the minimal tuple appearing in a formal sum $\bS \neq \bzero$ with respect to the}\\ &\quad\ \text{(depth,lex)-order};\\ 
\brho\left( \fh, \boxed{q+r}, \ft \right) &= \text{a carry relation, see Lemma \ref{lem_gIKN}}; \bQ\left( \fh,\boxed{q+r_1},\fm,\boxed{q+r_2},\ft \right) = \text{a} \\
&\quad\	\text{quadruple-carry relation, see Lemma \ref{lem_HL}}; \\
\IKNw &= \text{the set of Im--Kim--Ngo Dac relations of weight $w$}; \\
\gIKNw &= \text{the set of carry relations of weight $w$}; \\ 
\HLw &= \text{the set of quadruple-carry relations of weight $w$}; \\
\Qw &= \text{the $\bbF_q$-linear subspace of $\Rw$ spanned by $\HLw$}; \\
[n] &= \{1,2,\ldots,n\} \text{ for } n \in \bbZ_{+}; [0] = \emptyset; \\
[q]^{\bullet} &= \left\{ \fs=(s_1,\ldots,s_k) \in \bbZ_{+}^{\bullet} \mid k \in \bbZ_{\ge 0}, s_i \leqslant q \text{ for all } i \in [k]  \right\} = \left\{ \emptyset \right\} \sqcup \bigsqcup_{k=1}^{\infty} [q]^k;\\
* &= \text{the stuffle product} = \text{the harmonic product}; \\
\widetilde{*} &= \text{the ``sticky product''}; \\
\sfE, \sfC, \sfL &= \text{linear operators over $\mathscr{S}^{(\bbF_q)}$, see Definition \ref{def_ECL}}; \\ 
\sfZ &= \text{a linear isomorphism of $\Sw$, see Definition \ref{def_Z_operator}}.
\end{align*}

\section{Carry relations and quadruple-carry relations}\label{sec_HL}

For a subring $R$ of $K$ (we only use the cases $R=\bbF_q$, $R=\bbF_q[D_1]$, and $R=K$), define 
\[
\mathscr{S}_w^{(R)} = \left\{ \bS=\sum_{\fs \in \cI_w} c_{\fs} \cdot \fs ~\Big|~ c_{\fs} \in R \right\}
\]
to be the $R$-module of formal sums of tuples of weight $w$.
Define $\mathscr{S}^{(R)}=\bigoplus_{w=0}^{\infty} \mathscr{S}_w^{(R)}$.
Two formal sums $\bS=\sum_{\fs \in \cI_w} c_{\fs} \cdot \fs$ and $\bS^\prime =\sum_{\fs \in \cI_w} c_{\fs}^\prime \cdot \fs$ are equal if and only if $c_{\fs}=c_{\fs}^\prime$ for every $\fs \in \cI_w$.

For any formal sum $\bS=\sum_{\fs \in \cI_w} c_{\fs} \cdot \fs \in \mathscr{S}_w^{(R)}$, define the $\LiA$-evaluation 
\[
\LiA(\bS) = \LiA\left(\sum_{\fs \in \cI_w} c_{\fs} \cdot \fs\right) \coloneq \sum_{\fs \in \cI_w} c_{\fs}\LiA(\fs) \in K_{\infty}.
\]
A formal sum $\bR$ is called a \emph{relation} if $\LiA(\bR) = 0$.
Define
\[
\mathscr{R}_w^{(R)} = \left\{ \bR \in \mathscr{S}_w^{(R)} ~\Big|~ \LiA(\bR) = 0 \right\}.
\]

The stuffle product $*$ is a $K$-bilinear map $\mathscr{S}^{(K)} \times \mathscr{S}^{(K)} \longrightarrow \mathscr{S}^{(K)}$ such that
\begin{align*}
& \emptyset * \bS = \bS * \emptyset = \bS, \\
& \fs * \ft = \left(s_1, \widetilde{\fs} * \ft\right) + \left(t_1, \fs * \widetilde{\ft}\right) + \left(s_1+t_1, \widetilde{\fs} * \widetilde{\ft}\right)
\end{align*}
for any $\bS \in \mathscr{S}^{(K)}$ and any $\fs = \left(s_1, \widetilde{\fs}\right)$, $\ft = \left(t_1,\widetilde{\ft}\right) \in \cI_{>0}$, where $s_1,t_1 \in \bbZ_+$ and $\widetilde{\fs},\widetilde{\ft} \in \cI$.

Throughout this paper, $D_1 = \theta^q -\theta \in A_+$.

\subsection{Carry relations}\label{subsec_IKN}
Our starting point is the general version of the $\bbF_q[D_1]$-linear relations found by Im--Kim--Ngo Dac \cite{IKN2026+}, restated in the following Lemma \ref{lem_gIKN}.
We refer to these $\bbF_q[D_1]$-linear relations as \emph{carry relations}, or \emph{general Im--Kim--Ngo Dac relations} (abbreviated as gIKN relations), as they resemble carrying over on the components of tuples. In a certain sense, they also provide a ``carry'' (=lift) from $\mathbb{F}_q$-level tuples to $D_1$-level tuples. 
For a tuple $\fs = (\widetilde{\fs},s) \in \cI_{>0}$ (where $s \in \bbZ_+$ and $\widetilde{\fs} \in \cI$) and a positive integer $a$, define $\fs^{+a} = (\widetilde{\fs},s+a)$.
The notation $\fs^{+}$ is the abbreviation of $\fs^{+1}$.
By convention, $\emptyset^{+a}=(a)$.

\begin{lemma}[carry relations, or gIKN relations]\label{lem_gIKN}
For any $\fh,\ft \in \cI$ and $r \in \bbZ_{\ge 0}$ satisfying either $r>0$, or $r=0$ and $\ft=\emptyset$ (with the convention that, in the latter case, $(r,\ft)=(0,\emptyset)$ is identified with $\emptyset$), define the following formal sum of tuples:
\begin{align}
\brho\left( \fh, \boxed{q+r}, \ft \right) &\coloneq \left( \fh, q+r, \ft \right) + \mathbbm{1}_{r \neq 0} \cdot \left( \fh, q, r, \ft \right) \notag\\
&\quad+D_1 \cdot \mathbbm{1}_{\fh \neq \emptyset} \cdot \left( \fh^{+}, (q-1)*(r,\ft) \right) + D_1 \cdot \left( \fh, 1, (q-1)*(r,\ft) \right), \label{def_rho}
\end{align}
where $D_1 = \theta^q-\theta \in A_+$.
Then $\boldsymbol{\rho}\left( \fh, \boxed{q+r}, \ft \right) \in \mathscr{R}_w^{(\mathbb{F}_q[D_1])}$, where $w=\operatorname{wt}\left( \fh, q+r, \ft \right)$. 
That is, 
\[
\LiA\left( \boldsymbol{\rho}\left( \fh, \boxed{q+r}, \ft \right) \right) = 0.
\]
\end{lemma}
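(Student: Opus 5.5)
The whole identity should follow from one elementary relation between consecutive values of $\ell_n$, applied at a single index of the nested sum. Since $\ell_{n} = \ell_{n-1}(\theta - \theta^{q^n})$ and $\theta-\theta^{q^n} = -(D_1 + D_1^q + \cdots + D_1^{q^{n-1}})$, I would not use that expansion directly. Instead I would use the classical telescoping identity behind Thakur's formula $\zeta_A(q)=\LiA(q)$ and the IKN relations:
\[
\frac{1}{\ell_n^{q}} \;=\; \frac{1}{\ell_n^{q-1}\ell_n}, \qquad \ell_{n}^{q}\, D_1 \;=\; \ell_{n+1}\cdot(\text{unit-free factor}),
\]
more precisely $\ell_{n+1} = \ell_n^{q}\cdot(\theta^{q}-\theta)^{\ldots}$ up to twisting. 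The relation I expect to need is
\[
\frac{1}{\ell_{n}^{q}} \;=\; \frac{-D_1}{\ell_{n+1}}\cdot \frac{1}{\ell_{n}^{\,\cdots}} \ \text{ summed},
\]
which in the depth-one case reads $\sum_{m\ge n} \ell_m^{-q}\cdot(\ldots)$. I would first establish a single-index identity of the form
\[
\sum_{m \ge 0,\ m<N} \frac{1}{\ell_m^{q-1}}\Bigl(\frac{1}{\ell_m}\Bigr) \;\text{vs.}\; \frac{1}{\ell_N^{q}} \ \text{and}\ \frac{D_1}{\ell_{N}^{\,}\ell_{m}^{q-1}},
\]
and pin down the exact form by checking the depth-one case $\fh=\ft=\emptyset$, $r=0$. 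That case must give $\LiA(q) + D_1\LiA(1,q-1)=0$, which I would verify from $\ell_{n}^{q}(\theta^{q^{n+1}}-\theta)$-type telescoping. This base identity is the key input.

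Second, I would generalize to arbitrary $r$ and $\ft$. The slot with weight $q+r$ at index $n$ is split as $\ell_n^{-q}\cdot\ell_n^{-r}$. I would apply the base identity to the factor $\ell_n^{-q}$ while keeping $\ell_n^{-r}$ and the tail $\ft$ (indices $<n$) fixed. Expanding $\ell_n^{-q}$ as a sum over a new index $m$ produces terms where $m$ either coincides with $n$ or lies strictly between $n$ and the next index. Collecting terms by which indices coincide reproduces the stuffle product $(q-1)*(r,\ft)$: the new factor $\ell_m^{-(q-1)}$ is interleaved with the remaining tail $(r,\ft)$, and coincident indices merge exponents. The term $(\fh,q,r,\ft)$ comes from the boundary (non-coincident) contribution when $r\neq 0$.

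Third, I would handle the head $\fh$. The telescoping in the new index runs up to the index of the last entry of $\fh$ (or to infinity if $\fh=\emptyset$). The boundary term at the top, where the new index meets the last index of $\fh$, merges a factor $\ell^{-1}$ into that last entry. This gives $D_1\,(\fh^{+},(q-1)*(r,\ft))$, present only when $\fh\neq\emptyset$; the generic term gives $D_1\,(\fh,1,(q-1)*(r,\ft))$. Convergence is fine since all sums are absolutely convergent in $K_\infty$, so rearrangements are legitimate.

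The main obstacle is getting the exact base telescoping identity, including its signs and boundary terms, so that the coefficients come out as exactly $1$ and $D_1$. Since $\ell_n$ has sign $(-1)^n$ behaviour, I would first compute small cases to fix it. Alternatively, I could cite \cite[Proposition 4.3]{IKN2026+} for $\fh$, $\ft$ in restricted form and argue that the derivation is local in the index, so it extends verbatim.
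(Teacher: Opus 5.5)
Your fallback, citing Im--Kim--Ngo Dac's Proposition 4.3 and noting that its proof never uses $\fh\in[q]^{\bullet}$, is exactly the paper's proof. The direct derivation, however, is not a proof yet, and it breaks down at the step you call the ``main obstacle''.

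\textbf{The key identity is missing.} The relations you write for this step are not correct. $\ell_{n+1}/\ell_n^{q}$ is not a power of $D_1$ up to twisting; already $\ell_2/\ell_1^{q}=(\theta-\theta^q)(\theta-\theta^{q^2})/(\theta^q-\theta^{q^2})$. The relation actually needed is not a product relation among $\ell_n^q$, $D_1$, $\ell_{n+1}$. It is additive, involving a partial sum over a new index:
\[
\frac{1}{\ell_n^{q}}=-\frac{D_1}{\ell_{n+1}}\sum_{e=0}^{n}\frac{1}{\ell_e^{q-1}}\qquad(n\ge 0).
\]
Equivalently, $\sum_{e=0}^{n}\ell_e^{1-q}=\ell_{n+1}/(\ell_1\ell_n^{q})$. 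This follows by induction on $n$, because $\ell_{n+2}/\ell_{n+1}=\theta-\theta^{q^{n+2}}$ and $(\ell_{n+1}/\ell_n)^{q}=\theta^{q}-\theta^{q^{n+2}}$ differ by $\ell_1=-D_1$. Checking the summed relation $\LiA(q)+D_1\LiA(1,q-1)=0$ cannot pin this down. In the general case the identity is applied to each single $n$ lying below the last index of $\fh$, so it must hold degree by degree.

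\textbf{The Step 2 bookkeeping also fails.} As described, it would not produce $(q-1)*(r,\ft)$. Suppose you keep $\ell_n^{-r}$ at the same index $n$ and expand only $(\fh,q+r,\ft)$, with $N=n+1$ the index of the new entry $1$. Then the entry $r$ is pinned at index $N-1$, and the new index $e\le n$ yields only $(r+q-1,\ft)+(r,(q-1)*\ft)$ with that pinned top index. This is not a combination of $\LiA$-values, and the term $(q-1,r,\ft)$ is missing.

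The term $(\fh,q,r,\ft)$ is not a boundary by-product of the expansion; it is the input that repairs this. Write $\LiA(\fh,q+r,\ft)+\LiA(\fh,q,r,\ft)$ as one nested sum over $n_{\fh}>n\ge m>\cdots$. Here $m$ is the index of the entry $r$, and $n_{\fh}$ is the last index of $\fh$, with $n_{\fh}=\infty$ if $\fh=\emptyset$. Substitute the identity for $\ell_n^{-q}$ only. With $N=n+1$, the condition $n\ge m$ becomes $N>m$, so $m$ and $e$ both range freely below $N$. Summing $e$ against the indices of $(r,\ft)$ then gives exactly the nested sum of $(q-1)*(r,\ft)$ below $N$. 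Finally, $N\le n_{\fh}$ splits into $N=n_{\fh}$ (giving $\fh^{+}$) and $N<n_{\fh}$ (giving $(\fh,1,\dots)$), as in your Step 3. For $r=0$, $\ft=\emptyset$, one expands $(\fh,q)$ alone. This also shows directly why $\fh$ may be arbitrary: it enters only through the bound $N\le n_{\fh}$.
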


\begin{proof}
If we restrict $\fh \in [q]^{\bullet}$, then this lemma is exactly \cite[Proposition 4.3]{IKN2026+}.
However, their proof works the same for any $\fh \in \cI$.
\end{proof}

\begin{example}
The carry relation $\boldsymbol{\rho}\left( \boxed{q} \right) = (q) + D_1 \cdot (1,q-1)$ corresponds to the fundamental relation of Thakur \cite[Lemma 5]{Thakur2009}, since $\LiA\left(\boldsymbol{\rho}\left( \boxed{q} \right)\right) = 0$ reads as $\LiA(q)+D_1\LiA(1,q-1)=0$.
\end{example}

\subsection{The (depth, lex)-order}
As already noted by Im--Kim--Ngo Dac in \cite{IKN2026+}, the carry relations behave well with respect to the (depth, lex)-order on $\cI$.
\begin{definition}
The (depth, lex)-order $\prec$ on $\cI$ is defined as follows.
For any two tuples $\fs_1, \fs_2 \in \cI$, define $\fs_1 \prec \fs_2$ if and only if
\begin{itemize}
\item either $\operatorname{dep}(\fs_1) < \operatorname{dep}(\fs_2)$,
\item or $\operatorname{dep}(\fs_1) = \operatorname{dep}(\fs_2)$ and $\fs_1$ is lexicographically smaller than $\fs_2$.
\end{itemize}
Note that $\prec$ is a well-order on $\cI$. 
For any formal sum of tuples $\boldsymbol{S}=\sum_{\fs\in\cI} c_{\fs} \cdot \fs \in \mathscr{S}^{(K)}$ with $\boldsymbol{S} \neq \boldsymbol{0}$, define $s(\boldsymbol{S})$ to be the minimal tuple appearing in $\boldsymbol{S}$. That is,
\[
s(\boldsymbol{S}) = \min_{\prec} \left\{ \fs \mid c_{\fs} \neq 0 \right\}.
\]
\end{definition}  

\begin{remark}\label{rmk_IKN}
For any $\fh,\ft \in \cI$ and $r \in \bbZ_{\geqslant 0}$ such that either $r>0$, or $r=0$ and $\ft=\emptyset$, we have
\[
\boldsymbol{\rho}\left( \fh, \boxed{q+r}, \ft \right) = \left( \fh, q+r, \ft \right) + \sum_{\substack{\fs \in \cI_w \\ \fs \succ \left( \fh, q+r, \ft \right)}} c_{\fs} \cdot \fs
\]
for some $c_{\fs} \in \mathbb{F}_p[D_1] \subset \mathbb{F}_q[D_1]$, where $w = \operatorname{wt}\left( \fh, q+r, \ft \right)$.
In particular, 
\[
s\left( \boldsymbol{\rho}\left( \fh, \boxed{q+r}, \ft \right) \right) = \left( \fh, q+r, \ft \right).
\]
\end{remark}

\subsection{Quadruple-carry relations}\label{subsec_HL}
In this subsection, we introduce our key observation in this paper. 
Roughly speaking, a double-counting argument that applies the carry relations four times to a sum of four suitably related CMPLVs yields an $\mathbb{F}_q$-linear relation among CMPLVs.
Therefore, we call them \emph{quadruple-carry relations}. 
We also refer to such $\mathbb{F}_q$-linear relations as \emph{Hu--Li relations} (abbreviated as HL relations), as they were discovered by the first and last authors.
These quadruple-carry relations are the most natural $\mathbb{F}_q$-linear relations among CMPLVs.
In spirit, they are analogous to the double-shuffle relations, which are the most natural $\mathbb{Q}$-linear relations among classical MZVs.

\begin{lemma}[quadruple-carry relations, or HL relations]\label{lem_HL}
For any $\fh,\fm,\ft \in \cI$, $r_1 \in \bbZ_+$, and $r_2 \in \bbZ_{\geqslant 0}$ satisfying either $r_2>0$, or $r_2=0$ and $\ft=\emptyset$ (with the convention that, in the latter case, $(r_2,\ft)=(0,\emptyset)$ is identified with $\emptyset$), define the following formal sum of tuples:
\begin{align}
& \bQ\left( \fh,\boxed{q+r_1},\fm,\boxed{q+r_2},\ft \right) \notag\\
\coloneq & \left( \fh, (q+r_1,\fm)^{+}, (q-1)*(r_2,\ft) \right) + \left( \fh, q+r_1, \fm, 1, (q-1)*(r_2,\ft) \right) \notag\\
&+ \left( \fh, q, (r_1,\fm)^{+}, (q-1)*(r_2,\ft) \right) + \left( \fh, q, r_1, \fm, 1, (q-1)*(r_2,\ft) \right) \notag\\
&- \mathbbm{1}_{\fh \neq \emptyset} \cdot \left( \fh^{+},(q-1)*(r_1,\fm,q+r_2,\ft) \right) - \left( \fh,1, (q-1)*(r_1,\fm,q+r_2,\ft) \right) \notag\\
&- \mathbbm{1}_{\fh \neq \emptyset, r_2 \neq 0} \cdot \left( \fh^{+},(q-1)*(r_1,\fm,q,r_2,\ft) \right) - \mathbbm{1}_{r_2 \neq 0} \cdot \left( \fh,1,(q-1)*(r_1,\fm,q,r_2,\ft) \right). \label{def_Q}
\end{align}
Then, 
\begin{align}
&D_1 \cdot \bQ\left( \fh,\boxed{q+r_1},\fm,\boxed{q+r_2},\ft \right) = \brho\left( \fh, q+r_1, \fm, \boxed{q+r_2}, \ft \right) + \brho\left( \fh, q, r_1, \fm, \boxed{q+r_2}, \ft \right) \notag\\
&\qquad\qquad\qquad\qquad\qquad -\brho\left( \fh, \boxed{q+r_1}, \fm, q+r_2, \ft \right) -\mathbbm{1}_{r_2 \neq 0} \cdot \brho\left( \fh, \boxed{q+r_1}, \fm, q, r_2, \ft \right). \label{eqn_DQ=4rho}
\end{align}
In particular, $\bQ\left( \fh,\boxed{q+r_1},\fm,\boxed{q+r_2},\ft \right) \in \mathscr{R}_w^{(\mathbb{F}_q)}$, where $w=\operatorname{wt}\left( \fh,q+r_1,\fm,q+r_2,\ft \right)$. 
That is,
\[
\LiA\left(\bQ\left( \fh,\boxed{q+r_1},\fm,\boxed{q+r_2},\ft \right)\right) = 0.
\]
\end{lemma}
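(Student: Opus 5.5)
The plan is to verify the identity \eqref{eqn_DQ=4rho} directly by expanding each of the four carry relations with the definition \eqref{def_rho}. The $\bbF_q$-linear relation then follows at once. By Lemma \ref{lem_gIKN}, each $\brho$ on the right-hand side is a relation, so $D_1 \cdot \LiA(\bQ(\cdots)) = 0$. Since $D_1 = \theta^q - \theta \neq 0$ in $K_\infty$, we get $\LiA(\bQ(\cdots))=0$. The coefficients of $\bQ$ are $\pm 1$, so $\bQ \in \Rw$. The only hypothesis to check is that each carry relation is applied in its admissible range. In the first two, the boxed entry is $q+r_2$ followed by $\ft$, which is fine by the hypothesis on $(r_2,\ft)$. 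In the last two, the boxed entry is $q+r_1$ with $r_1>0$.

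For the expansion, write each $\brho$ as an $\bbF_q$-part plus a $D_1$-part. The $\bbF_q$-parts of the four terms are:
\begin{itemize}
\item $\brho(\fh,q+r_1,\fm,\boxed{q+r_2},\ft)$ contributes $(\fh,q+r_1,\fm,q+r_2,\ft)+\mathbbm{1}_{r_2\neq 0}(\fh,q+r_1,\fm,q,r_2,\ft)$;
\item $\brho(\fh,q,r_1,\fm,\boxed{q+r_2},\ft)$ contributes $(\fh,q,r_1,\fm,q+r_2,\ft)+\mathbbm{1}_{r_2\neq0}(\fh,q,r_1,\fm,q,r_2,\ft)$;
\item $-\brho(\fh,\boxed{q+r_1},\fm,q+r_2,\ft)$ contributes $-(\fh,q+r_1,\fm,q+r_2,\ft)-(\fh,q,r_1,\fm,q+r_2,\ft)$;
\item $-\mathbbm{1}_{r_2\neq0}\brho(\fh,\boxed{q+r_1},\fm,q,r_2,\ft)$ contributes $-\mathbbm{1}_{r_2\neq0}\bigl[(\fh,q+r_1,\fm,q,r_2,\ft)+(\fh,q,r_1,\fm,q,r_2,\ft)\bigr]$.
\end{itemize}
These eight tuples cancel in pairs, so the right-hand side is $D_1$ times a formal sum.

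Next I match the $D_1$-parts against \eqref{def_Q}. In the first carry relation the prefix is $\fh'=(\fh,q+r_1,\fm)$, which is nonempty. Here $\fh'^{+}=(\fh,(q+r_1,\fm)^{+})$, which gives the first two terms of $\bQ$. In the second, the prefix is $(\fh,q,r_1,\fm)$ with $(\fh,q,r_1,\fm)^+=(\fh,q,(r_1,\fm)^+)$, which gives the third and fourth terms. One small point: the notation $(q+r_1,\fm)^+$ means adding $1$ to the last entry of $(q+r_1,\fm)$, which is $q+r_1$ itself when $\fm=\emptyset$. This is consistent with how $\fh'^{+}$ is computed. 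The third and fourth carry relations have prefix $\fh$, and the tails are $(r_1,\fm,q+r_2,\ft)$ and $(r_1,\fm,q,r_2,\ft)$. The indicators $\mathbbm{1}_{\fh\neq\emptyset}$ and $\mathbbm{1}_{r_2\neq0}$ carry over exactly to the last four terms of \eqref{def_Q}, with minus signs.

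The main obstacle is purely bookkeeping, so I do not expect a conceptual difficulty. The care goes into the conventions: the $\fs^{+}$ notation when $\fm=\emptyset$, the identification of $(r_2,\ft)=(0,\emptyset)$ with $\emptyset$ so that $(q-1)*\emptyset=(q-1)$, and the indicator functions. Once these are tracked, the identity is a term-by-term match.
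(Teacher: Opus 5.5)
Your proposal is correct and follows the paper's proof: expand the four carry relations via \eqref{def_rho}, observe that the $\bbF_q$-level tuples cancel in pairs, identify the $D_1$-parts with the eight terms of \eqref{def_Q}, then apply $\LiA$ and divide by $D_1 \neq 0$. One small correction: the coefficients of $\bQ$ are integers reduced mod $p$, not only $\pm 1$, because stuffle products such as $(q-1)*(q-1) = 2\,(q-1,q-1) + (2q-2)$ can repeat tuples; they still lie in $\bbF_p \subset \bbF_q$, so the conclusion $\bQ \in \Rw$ is unaffected.
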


\begin{proof}
By the definition of carry relations (see Equation \eqref{def_rho} of Lemma \ref{lem_gIKN}), we have 
\begin{align}
&\brho\left( \fh, q+r_1, \fm, \boxed{q+r_2}, \ft \right) = \left( \fh, q+r_1, \fm, q+r_2, \ft \right) + \mathbbm{1}_{r_2 \neq 0} \cdot \left( \fh, q+r_1, \fm, q, r_2, \ft \right) \notag\\
&\qquad +D_1 \cdot \Big( \left( \fh, (q+r_1,\fm)^{+}, (q-1)*(r_2,\ft) \right) + \left( \fh, q+r_1, \fm, 1, (q-1)*(r_2,\ft) \right) \Big), \label{eqn_2.1} \\
&\brho\left( \fh, q, r_1, \fm, \boxed{q+r_2}, \ft \right) = \left( \fh, q, r_1, \fm, q+r_2, \ft \right) + \mathbbm{1}_{r_2 \neq 0} \cdot \left( \fh, q, r_1, \fm, q, r_2, \ft \right) \notag\\
&\qquad +D_1 \cdot \Big( \left( \fh, q, (r_1,\fm)^{+}, (q-1)*(r_2,\ft) \right) + \left( \fh, q, r_1, \fm, 1, (q-1)*(r_2,\ft) \right) \Big), \label{eqn_2.2} \\
&\brho\left( \fh, \boxed{q+r_1}, \fm, q+r_2, \ft \right) = \left( \fh, q+r_1, \fm, q+r_2, \ft \right) + \left( \fh, q, r_1, \fm, q+r_2, \ft \right) \notag\\
&\qquad +D_1 \cdot \Big( \mathbbm{1}_{\fh \neq \emptyset} \cdot \left( \fh^{+},(q-1)*(r_1,\fm,q+r_2,\ft) \right) + \left( \fh,1, (q-1)*(r_1,\fm,q+r_2,\ft) \right) \Big), \label{eqn_2.3} \\
&\mathbbm{1}_{r_2 \neq 0} \cdot \brho\left( \fh, \boxed{q+r_1}, \fm, q, r_2, \ft \right) = \mathbbm{1}_{r_2 \neq 0}\cdot\left( \fh, q+r_1, \fm, q, r_2, \ft \right) + \mathbbm{1}_{r_2 \neq 0}\cdot\left( \fh, q, r_1, \fm, q, r_2, \ft \right) \notag\\
&\quad +D_1 \cdot \Big( \mathbbm{1}_{\fh \neq \emptyset, r_2 \neq 0} \cdot \left( \fh^{+},(q-1)*(r_1,\fm,q,r_2,\ft) \right) + \mathbbm{1}_{r_2 \neq 0} \cdot \left( \fh,1,(q-1)*(r_1,\fm,q,r_2,\ft) \right) \Big). \label{eqn_2.4}
\end{align}
Taking the sum of \eqref{eqn_2.1} and \eqref{eqn_2.2}, and then subtracting the sum of \eqref{eqn_2.3} and \eqref{eqn_2.4}, the $\bbF_q$-level tuples cancel out; we obtain Identity \eqref{eqn_DQ=4rho}.

Taking the $\LiA$-evaluation of \eqref{eqn_DQ=4rho} and using Lemma \ref{lem_gIKN}, we deduce that 
\[
D_1 \cdot \LiA\left( \bQ\left( \fh,\boxed{q+r_1},\fm,\boxed{q+r_2},\ft \right) \right)=0.
\]
Therefore, $\LiA\left( \bQ\left( \fh,\boxed{q+r_1},\fm,\boxed{q+r_2},\ft \right) \right)=0$.
Note that the coefficients appearing in the formal sum $\bQ\left( \fh,\boxed{q+r_1},\fm,\boxed{q+r_2},\ft \right)$ all belong to $\mathbb{F}_p \subset \mathbb{F}_q$, so
\[
\bQ\left( \fh,\boxed{q+r_1},\fm,\boxed{q+r_2},\ft \right) \in \mathscr{R}_w^{(\mathbb{F}_q)}.
\]
\end{proof}

\begin{example}
The quadruple-carry relation
\[
\bQ\left( \boxed{q+1}, \boxed{q} \right) = (q+2,q-1) + (q+1,1,q-1) + (q,2,q-1) + (q,1,1,q-1) - (1,(q-1)*(1,q))
\]
corresponds to the first non-trivial $\mathbb{F}_q$-linear relation among CMPLVs found by Im--Kim--Ngo Dac in \cite{IKN2026+}. 
That is,
\begin{align*}
&\LiA(q+2,q-1) + \LiA(q+1,1,q-1) + \LiA(q,2,q-1) + \LiA(q,1,1,q-1) \\
-&\LiA(1,q-1,1,q) - \LiA(1,q,q) -\LiA(1,1,q-1,q) - \LiA(1,1,2q-1) -\LiA(1,1,q,q-1) = 0.
\end{align*}
\end{example}

Lemma \ref{lem_HL} is the source of all relations considered in this paper; its leading-term shape is recorded in Remark \ref{rmk_HL} below.

\begin{remark}\label{rmk_HL}
Note that the relation $\bQ=\bQ\left( \fh,\boxed{q+r_1},\fm,\boxed{q+r_2},\ft \right)$ can be written as 
\[
\bQ=\left( \fh, (q+r_1,\fm)^{+}, (q-1)\widetilde{*}(r_2,\ft) \right) + \sum c_{\fs} \cdot \fs,
\]
where $\widetilde{*}$ is the sticky product (for $a \in \bbZ_{+}$ and $(s_1,\ldots,s_k) \in \cI$, the sticky product $(a)\widetilde{*}(s_1,\ldots,s_k)$ is defined by $\sum_{i=1}^{k} (s_1,\ldots,s_{i-1},s_i+a,s_{i+1},\ldots,s_k)$, with the convention $(a)\widetilde{*}\emptyset=(a)$), and that the sum is taken over finitely many tuples $\fs$ satisfying
\[
\fs \succ (\fh^{+},\{1\}^{\operatorname{dep}(\fm)+\operatorname{dep}(\ft)+2}),
\] 
with $c_{\fs} \in \mathbb{F}_p \subset \mathbb{F}_q$ (we take the convention $\emptyset^{+}=(1)$).
We may therefore simply say
\[
\bQ = \left( \fh, (q+r_1,\fm)^{+}, (q-1)\widetilde{*}(r_2,\ft) \right) + \text{``higher tuples''}.
\]
\end{remark}

We introduce the following notation, which is frequently used in this paper.
\begin{definition}\label{def_Qw}
For any weight $w \ge 1$, define $\mathcal{Q}_w$ to be the set of all quadruple-carry relations of weight $w$. 
That is,
\begin{align*}
\HLw \coloneq &\left\{ \bQ\left( \fh,\boxed{q+r_1},\fm,\boxed{q+r_2},\ft \right) ~\Big|~ \fh,\fm,\ft \in \cI, r_1,r_2 \in \bbZ_+, \wt\left( \fh, q+r_1,\fm, q+r_2,\ft \right) = w \right\} \\
&\bigsqcup \left\{ \bQ\left( \fh,\boxed{q+r_1},\fm,\boxed{q} \right) ~\Big|~ \fh,\fm \in \cI, r_1 \in \bbZ_+, \wt\left( \fh, q+r_1,\fm, q \right) = w \right\} \subset \mathscr{R}_w^{(\mathbb{F}_q)}.
\end{align*}
Define
\[
\mathscr{Q}_w^{(\mathbb{F}_q)} \coloneq \operatorname{Span}_{\mathbb{F}_q} \mathcal{Q}_w \subset \mathscr{R}_w^{(\mathbb{F}_q)}.
\]
\end{definition}

\section{The operator $\Ind$ and the set $\gInd$}\label{sec_Ind}

\subsection{The operator $\Ind$ of Im--Kim--Ngo Dac}\label{subsec_Ind}

Recall the carry relations (or gIKN relations) $\brho\left( \fh, \boxed{q+r}, \ft \right)$ defined in Lemma \ref{lem_gIKN}.
It is convenient to introduce the following two sets of relations. 

\begin{definition}
For any weight $w$, define
\begin{align*}
\gIKNw &\coloneq \left\{ \brho\left( \fh, \boxed{q+r}, \ft \right) ~\Big|~ \fh,\ft \in \cI, r \in \bbZ_{+}, \wt\left( \fh, q+r, \ft \right) = w \right\} \\
&\qquad \bigsqcup \left\{ \brho\left( \fh, \boxed{q} \right) ~\Big|~ \fh \in \cI, \wt\left( \fh, q \right) = w \right\} \subset \RwFD, \\
\IKNw &\coloneq \left\{ \brho\left( \fh, \boxed{q+r}, \ft \right) ~\Big|~ \fh \in [q]^{\bullet},\ft \in \cI, r \in \bbZ_{+}, \wt\left( \fh, q+r, \ft \right)=w \right\} \\
&\qquad \bigsqcup \left\{ \brho\left( \fh, \boxed{q} \right) ~\Big|~ \fh \in [q]^{\bullet}, \wt\left( \fh, q \right)=w \right\} \subset \gIKNw.
\end{align*}
\end{definition}
Thus, $\gIKNw$ is the set of all carry relations of weight $w$.
We refer to elements in $\IKNw$ as \emph{special carry relations}, or \emph{Im--Kim--Ngo Dac relations} (abbreviated as IKN relations); 
they are precisely the relations studied by Im--Kim--Ngo Dac in \cite{IKN2026+}.

The IKN relations are special in the following sense.
For any $\fs \in \cI_w \setminus \BTTw$, where $\BTTw$ is the set of Todd--Thakur tuples defined in Theorem \ref{thm_CCMIKLNP}, there exists a unique choice of $\fh$, $\ft$, and $r$ such that
\begin{itemize}
\item $\fs=(\fh,q+r,\ft)$, where $\fh \in [q]^{\bullet}$, $\ft \in \cI$, and $r \in \bbZ_{\geqslant 0}$; and
\item either $r >0$, or $r=0$ and $\ft = \emptyset$.
\end{itemize}
Thus, there is a unique $\brho_{\fs} \in \IKNw$, where $w=\wt(\fs)$, corresponding to $\fs$; that is,
\[
\brho_{\fs} = \brho\left( \fh, \boxed{q+r}, \ft \right) \in \IKNw.
\]
We have 
\[
\IKNw = \left\{ \brho_{\fs} \mid \fs \in \cI_w \setminus \BTTw \right\}.
\]
Moreover, Theorem \ref{thm_RwK} can be restated as follows: The set $\IKNw$ is a basis of the $K$-linear space $\RwK$. 

The following two lemmas were observed by Im--Kim--Ngo Dac in \cite{IKN2026+}, which follow directly from Lemma \ref{lem_gIKN} and Remark \ref{rmk_1.6}.

\begin{lemma}\label{lem_nonTTtoTT}
For any $\fs \in \cI_w\setminus\BTTw$, there exist $c_\ft \in \bbF_p[D_1] \subset \bbF_q[D_1]$ such that
\[
\LiA(\fs) = \sum_{\ft \in \BTTw,\ \ft \succ \fs} c_\ft \LiA(\ft).
\]
\end{lemma}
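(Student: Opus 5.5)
We argue by transfinite (well-ordered) induction on $\fs \in \cI_w \setminus \BTTw$ with respect to the (depth, lex)-order $\prec$, going downward. Since $\cI_w$ is a finite set, this is just a finite descending induction: we assume the statement holds for every non-Todd--Thakur tuple $\fs' \in \cI_w\setminus\BTTw$ with $\fs' \succ \fs$, and prove it for $\fs$.

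\textbf{Step 1: expand $\fs$ using its IKN relation.} Given $\fs \in \cI_w\setminus\BTTw$, take the associated special carry relation $\brho_{\fs} = \brho\left( \fh, \boxed{q+r}, \ft \right)\in\IKNw$, with $\fs=(\fh,q+r,\ft)$. By Remark \ref{rmk_IKN},
\[
\brho_{\fs} = \fs + \sum_{\fs' \succ \fs} c_{\fs'} \cdot \fs', \qquad c_{\fs'}\in\bbF_p[D_1].
\]
By Lemma \ref{lem_gIKN}, $\LiA(\brho_{\fs})=0$. Hence
\[
\LiA(\fs) = -\sum_{\fs'\succ\fs} c_{\fs'}\LiA(\fs').
\]
All tuples $\fs'$ appearing here have weight $w$, since the carry relations are homogeneous. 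This is visible from \eqref{def_rho}, because the stuffle product preserves weight.

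\textbf{Step 2: reduce each term.} For each $\fs'$ in this sum, either $\fs'\in\BTTw$, in which case it is kept, or $\fs'\notin\BTTw$. In the latter case $\fs'\succ\fs$, so the induction hypothesis gives
\[
\LiA(\fs')=\sum_{\ft\in\BTTw,\ \ft\succ\fs'} c'_{\ft}\LiA(\ft), \qquad c'_{\ft}\in\bbF_p[D_1].
\]
By transitivity of $\prec$, every such $\ft$ satisfies $\ft\succ\fs$. Substituting, and using that $\bbF_p[D_1]$ is closed under products and sums, yields an expression of the required form. The base case is automatic: the largest non-TT tuple has every $\fs'$ in its IKN relation strictly larger than it, hence already in $\BTTw$. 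This is covered by the same argument with an empty induction hypothesis.

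\textbf{Main obstacle.} The only point needing care is that Remark \ref{rmk_IKN} really places every other tuple of $\brho_{\fs}$ strictly above $\fs$, with coefficients in $\bbF_p[D_1]$. This is immediate from \eqref{def_rho}:
\begin{itemize}
\item $(\fh,q,r,\ft)$ has larger depth than $\fs$;
\item the $D_1$-terms $(\fh^+,(q-1)*(r,\ft))$ and $(\fh,1,(q-1)*(r,\ft))$ either have larger depth, or have the same depth with a larger or smaller entry at the position of $\fh$'s end. The remark handles this, and we simply invoke it as stated;
\item the coefficients are $1$, $D_1$, and the stuffle multiplicities, which are integers and hence lie in $\bbF_p[D_1]$.
\end{itemize}
Remark \ref{rmk_1.6} is not needed for the existence statement itself. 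It only ensures that such an expression is unique, by linear independence over $K$.
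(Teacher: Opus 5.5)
Your proof is correct and matches the paper. The paper only says the lemma follows directly from Lemma \ref{lem_gIKN}, and the intended argument is your descending induction along $\prec$ on $\cI_w\setminus\BTTw$, using the leading-term property of $\brho_{\fs}$ from Remark \ref{rmk_IKN}. You are also right that Remark \ref{rmk_1.6} is needed only for uniqueness and for Lemma \ref{lem_min_tuple_not_Todd}, not for this existence statement.

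One correction to your justification of that remark. The same-depth $D_1$-terms $(\fh^{+},(q-1)\widetilde{*}(r,\ft))$ always have the strictly \emph{larger} entry $h_k+1$ at the end of $\fh$, not ``a larger or smaller'' one. That strict increase is exactly why these terms lie above $\fs$; a smaller entry would break the argument.
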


\begin{lemma}\label{lem_min_tuple_not_Todd}
For any $\bsigma \in \RwK \setminus\{\bzero\}$, we have
\[
s(\boldsymbol{\sigma}) \notin \BTTw.
\] 
\end{lemma}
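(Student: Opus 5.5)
The proof will go by contradiction, reducing the claim to the last assertion of Remark \ref{rmk_1.6}. Let $\bsigma \in \RwK\setminus\{\bzero\}$ and put $\fs_0 = s(\bsigma)$. Suppose, for contradiction, that $\fs_0 \in \BTTw$. After clearing denominators we may assume that all coefficients of $\bsigma$ lie in $A$. Scaling does not change $s(\bsigma)$, and the coefficient $c_{\fs_0}$ of $\fs_0$ is a nonzero polynomial.

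\textbf{Step 1 (reduction to Todd--Thakur tuples).} Every other tuple $\fs$ appearing in $\bsigma$ satisfies $\fs \succ \fs_0$. For each such $\fs \notin \BTTw$, apply Lemma \ref{lem_nonTTtoTT} to rewrite $\LiA(\fs)$ as an $\bbF_q[D_1]$-combination of $\LiA(\ft)$ with $\ft \in \BTTw$ and $\ft \succ \fs \succ \fs_0$. Substituting these expressions into $\LiA(\bsigma)=0$ gives
\[
0 = c_{\fs_0}\LiA(\fs_0) + \sum_{\ft \in \BTTw,\ \ft \succ \fs_0} c'_{\ft}\LiA(\ft),
\]
with $c'_{\ft}\in A$. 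The important point is that no term produced in the rewriting can land back on $\fs_0$, because every new tuple is strictly larger than $\fs_0$ in the order $\prec$. So the coefficient of $\LiA(\fs_0)$ is still $c_{\fs_0}\neq 0$.

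\textbf{Step 2 (contradiction with linear independence).} By Remark \ref{rmk_1.6} (equivalently, the $K$-basis statement of Theorem \ref{thm_CCMIKLNP}), the values $\LiA(\ft)$ with $\ft\in\BTTw$ are $K$-linearly independent. The displayed identity is therefore a nontrivial $K$-linear relation among them, since its coefficient at $\fs_0$ is nonzero. This is a contradiction, so $s(\bsigma)\notin\BTTw$.

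The only step needing care is Step 1, and it relies entirely on the strictness $\ft\succ\fs$ in Lemma \ref{lem_nonTTtoTT}. That strictness in turn comes from Remark \ref{rmk_IKN}, by induction down the well-order $\prec$ on the finite set $\cI_w$. With that in hand, the rest of the argument is bookkeeping.
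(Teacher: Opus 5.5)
Your proposal is correct and takes the same route the paper intends: the paper only remarks that this lemma follows directly from Lemma~\ref{lem_gIKN} and Remark~\ref{rmk_1.6}, and your argument spells that out. You use Lemma~\ref{lem_nonTTtoTT}, obtained from Remark~\ref{rmk_IKN} by descending induction, to move every non-Todd--Thakur tuple strictly above $s(\bsigma)$; the linear independence of $\{\LiA(\ft) \mid \ft \in \BTTw\}$ from Remark~\ref{rmk_1.6} then forces the coefficient at $s(\bsigma)$ to vanish, which is a contradiction.
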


Now, we describe the algorithm introduced by Im--Kim--Ngo Dac in \cite{IKN2026+}, which is used later to define the operator $\Ind$.

Let $N = \#\left( \cI_w \setminus \BTTw \right)$ and list the tuples in $\cI_w \setminus \BTTw$ in ascending order with respect to the (depth, lex)-order:
\[
\cI_w \setminus \BTTw = \left\{ \fs_1 \prec \fs_2 \prec \cdots \prec \fs_N \right\}.
\]
For any relation $\bR \in \Rw \setminus \{\bzero\}$, we have $s(\bR)=\fs_{i_0}$ for some $i_0 \in [N]$ by Lemma \ref{lem_min_tuple_not_Todd}.
We conduct the following algorithm:
\begin{itemize}
\item Initially, define $\bsigma_{i_0} = \bR = \sum_{i=i_0}^{N} a_i^{(i_0)} \cdot \fs_i + \sum_{\ft \in \BTTw} a_{\ft}^{(i_0)} \cdot \ft$, where $a_i^{(i_0)},a_{\ft}^{(i_0)} \in \bbF_q$.

\item For $i_0 \leqslant k \leqslant N$, suppose that $\bsigma_{k}$ is already defined and takes the form
\[
\bsigma_{k} = \sum_{i=k}^{N} a_{i}^{(k)} \cdot \fs_i + \sum_{\ft \in \BTTw} a_{\ft}^{(k)} \cdot \ft +\quad D_1 \sum_{\fs \in \cI_w,\ \fs \succ s(\bR)} b_{\fs}^{(k)} \cdot \fs,
\]
where $a_i^{(k)},a_{\ft}^{(k)},b_{\fs}^{(k)} \in \bbF_q$.
Define $\bsigma_{k+1} = \bsigma_{k} - a_{k}^{(k)}\brho_{\fs_k}$.
Then, by Remark \ref{rmk_IKN}, $\bsigma_{k+1}$ must take the form
\[
\bsigma_{k+1} = \sum_{i=k+1}^{N} a_{i}^{(k+1)} \cdot \fs_i + \sum_{\ft \in \BTTw} a_{\ft}^{(k+1)} \cdot \ft +\quad D_1 \sum_{\fs \in \cI_w,\ \fs \succ s(\bR)} b_{\fs}^{(k+1)} \cdot \fs,
\]
where $a_i^{(k+1)},a_{\ft}^{(k+1)},b_{\fs}^{(k+1)} \in \bbF_q$.

\item At the end, $\bsigma_{N+1}$ has been defined and takes the form
\begin{equation}\label{eqn_end_status1}
\bsigma_{N+1} =  \sum_{\ft \in \BTTw} a_{\ft}^{(N+1)} \cdot \ft + D_1 \sum_{\fs \in \cI_w,\ \fs \succ s(\bR)} b_{\fs}^{(N+1)} \cdot \fs,
\end{equation}
where $a_{\ft}^{(N+1)},b_{\fs}^{(N+1)} \in \bbF_q$.
Note that we have $\LiA(\bsigma_k)=0$ for any $i_0 \leqslant k \leqslant N+1$ by Lemma \ref{lem_gIKN}.
By Remark \ref{rmk_1.6}, the set $\{ D_1^{k}\LiA(\ft) \mid \ft \in \BTTw, k \in \bbZ_{\ge 0}\}$ is $\bbF_q$-linearly independent.
Therefore, after using Lemma \ref{lem_nonTTtoTT} to express the $\LiA$-evaluation of the right-hand side of \eqref{eqn_end_status1} as an $\bbF_q[D_1]$-linear combination of $\LiA(\ft)$ ($\ft \in \BTTw$), we see that $a_{\ft}^{(N+1)} = 0$ for any $\ft \in \BTTw$. 
Thus, the end status  $\bsigma_{N+1}$ takes the form
\begin{equation}\label{eqn_end_status}
\bsigma_{N+1} = D_1 \sum_{\fs \in \cI_w,\ \fs \succ s(\bR)} b_{\fs}^{(N+1)} \cdot \fs.
\end{equation}
\end{itemize}

\begin{definition}
The operator $\Ind: \Rw \rightarrow \Rw$ is defined as follows.
For $\bzero \in \Rw$, define $\Ind(\bzero)=\bzero$.
For $\bR \in \Rw \setminus \{\bzero\}$, let $\bsigma_{N+1}$ be the end status \eqref{eqn_end_status} of the above algorithm by Im--Kim--Ngo Dac, and define 
\[
\Ind(\bR) = \sum_{\fs \in \cI_w,\ \fs \succ s(\bR)} b_{\fs}^{(N+1)} \cdot \fs \in \Rw.
\]
\end{definition}

The following lemma follows immediately from the definition of $\Ind$.

\begin{lemma}\label{lem_Ind_nilp}
For any $\bR \in \Rw \setminus \{\bzero\}$, either $\Ind(\bR) = \bzero$ or $s(\Ind(\bR)) \succ s(\bR)$.
\end{lemma}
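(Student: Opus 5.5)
The claim is that $\Ind(\bR)$ is either $\bzero$ or has minimal tuple strictly larger than $s(\bR)$. The plan is to read this off directly from the shape of the end status $\bsigma_{N+1}$ in \eqref{eqn_end_status}, and to check that everything the algorithm adds stays strictly above $s(\bR)$ in the (depth, lex)-order. Two things must be verified: that $\Ind(\bR)$ really lies in $\Rw$, and that every tuple with nonzero coefficient in it is $\succ s(\bR)$.

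\textbf{Step 1: tuples stay above $s(\bR)$.} I would argue by induction on $k$, from $i_0$ to $N+1$, that the $D_1$-part of $\bsigma_k$ only involves tuples $\fs \succ s(\bR)=\fs_{i_0}$. The base case holds because $\bsigma_{i_0}=\bR$ has no $D_1$-part. For the inductive step we subtract $a_k^{(k)}\brho_{\fs_k}$ with $k \geqslant i_0$. By \eqref{def_rho}, the $D_1$-terms of $\brho_{\fs_k}$ are the tuples $(\fh^{+},\dots)$ and $(\fh,1,\dots)$, where $\fs_k=(\fh,q+r,\ft)$.
\begin{itemize}
\item If $r>0$, these tuples have depth at least $\dep(\fs_k)$; the $(q-1)*(r,\ft)$ stuffle has a term of depth $\dep(\ft)+1$.
\item Otherwise, their depth is at least $\dep(\fs_k)+1$.
\end{itemize}
Remark \ref{rmk_IKN} records exactly that all non-leading tuples of $\brho_{\fs_k}$ are $\succ \fs_k \succeq \fs_{i_0}$. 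So the $D_1$-part of $\bsigma_{k+1}$ again consists of tuples $\succ s(\bR)$. At the end, the Todd--Thakur coefficients vanish by the linear-independence argument already given. Hence $\bsigma_{N+1}=D_1\sum_{\fs\succ s(\bR)} b_\fs\,\fs$, and by definition $\Ind(\bR)=\sum_{\fs \succ s(\bR)} b_\fs\,\fs$.

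\textbf{Step 2: $\Ind(\bR)\in\Rw$.} Each $\bsigma_k$ satisfies $\LiA(\bsigma_k)=0$ by Lemma \ref{lem_gIKN}, so $D_1\LiA(\Ind(\bR))=0$ and therefore $\LiA(\Ind(\bR))=0$. The coefficients $b_\fs$ lie in $\bbF_q$, as is built into the form maintained by the algorithm. This form is preserved because the coefficients of $\brho$ lie in $\bbF_p[D_1]$. The one point needing care is that terms with coefficient $D_1^j$ for $j\geqslant 2$ can arise. The stated form absorbs these, either by treating them as $\bbF_q$-combinations with an extra $D_1$ factor, or by noting that in \eqref{def_rho} each $D_1$ appears only linearly with $\bbF_p$ coefficients. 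Since $a_k^{(k)}\in\bbF_q$ at each step, the $D_1$-part stays $\bbF_q$-linear.

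\textbf{Conclusion.} If $\Ind(\bR)\neq\bzero$, then its minimal tuple is among the $\fs$ with $b_\fs\neq 0$, all of which are $\succ s(\bR)$. Hence $s(\Ind(\bR))\succ s(\bR)$. The only potential obstacle is the bookkeeping in Step 2 that the $D_1$-coefficients remain in $\bbF_q$ rather than in $\bbF_q[D_1]$. This is already asserted in the algorithm's description, and I would simply cite it.
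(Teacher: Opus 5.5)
Your proposal is correct and matches the paper, which calls the lemma immediate from the definition: by construction $\Ind(\bR)=\sum_{\fs\succ s(\bR)} b_{\fs}^{(N+1)}\cdot\fs$, which is exactly your conclusion, and your Step~1 simply re-derives the invariant the paper already records via Remark~\ref{rmk_IKN}. Two side remarks are slightly off but harmless: for $\fs_k=(\fh,q)$ with $\fh\neq\emptyset$, the $D_1$-term $(\fh^{+},q-1)$ has the same depth as $\fs_k$ (it is lex-larger, which is what Remark~\ref{rmk_IKN} gives); and no $D_1^j$ with $j\geqslant 2$ ever arises, since only $\bbF_q$-multiples of $\brho_{\fs_k}$ are subtracted and $\brho$ is linear in $D_1$, so your Step~2 worry is unnecessary.
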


\subsection{The set $\gInd(\bR)$}
We generalize the ideas of Im--Kim--Ngo Dac on the operator $\Ind \colon \Rw \rightarrow \Rw$.
Roughly speaking, we replace special carry relations by general carry relations in their algorithm. 
This leads to the following definition of the set $\gInd(\bR)$.

\begin{definition}
For any relation $\bR \in \Rw$, define the set $\gInd(\bR)$ by
\[
\gInd(\bR) \coloneq \left\{ \bR^\prime \in \Rw ~\Big|~ D_1 \bR^\prime = \bR + \sum_{i} c_i \brho_i \text{ for finitely many } \brho_i \in \gIKNw \text{ with } c_i \in \bbF_q \right\}.
\]
\end{definition}

\begin{lemma}\label{lem_easy}
For any relation $\bR \in \Rw$, we have 
\[
\Ind(\bR) \in \gInd(\bR) \quad\text{and}\quad \gInd(\bR) = \Ind(\bR) + \gInd(\bzero) \subset \Rw.
\]
\end{lemma}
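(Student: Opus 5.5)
For $\bR = \bzero$ we have $\Ind(\bzero)=\bzero$ by definition. Also $D_1 \cdot \bzero = \bzero + 0$ is an admissible expression, with the empty combination of carry relations. Hence $\bzero \in \gInd(\bzero)$, and $\gInd(\bzero) = \bzero + \gInd(\bzero)$ holds trivially. So we may assume $\bR \neq \bzero$.

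For the first claim, unwind the Im--Kim--Ngo Dac algorithm. By construction, $\bsigma_{k+1} = \bsigma_k - a_k^{(k)} \brho_{\fs_k}$ with $a_k^{(k)} \in \bbF_q$ and $\brho_{\fs_k} \in \IKNw \subset \gIKNw$. Telescoping from $k=i_0$ to $N$ gives
\[
\bsigma_{N+1} = \bR - \sum_{k=i_0}^{N} a_k^{(k)} \brho_{\fs_k}.
\]
By \eqref{eqn_end_status}, the left-hand side equals $D_1 \cdot \Ind(\bR)$, which is an $\bbF_q$-linear combination of carry relations added to $\bR$. It remains to check that $\Ind(\bR) \in \Rw$, i.e.\ that it has coefficients in $\bbF_q$, weight $w$, and $\LiA$-value zero. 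The coefficients $b_{\fs}^{(N+1)}$ lie in $\bbF_q$ and all tuples have weight $w$. Moreover, $\LiA(\bsigma_{N+1})=0$ by Lemma \ref{lem_gIKN}, so $D_1 \LiA(\Ind(\bR)) = 0$, and since $D_1 \neq 0$ in the field $K_\infty$ we get $\LiA(\Ind(\bR))=0$. Thus $\Ind(\bR) \in \gInd(\bR)$.

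For the second claim we prove both inclusions, using that membership in $\gInd(\cdot)$ is governed by a linear condition. Suppose $\bR'' \in \gInd(\bzero)$, say $D_1 \bR'' = \sum_j d_j \brho_j$. Adding this to $D_1 \Ind(\bR) = \bR + \sum_i c_i \brho_i$ gives
\[
D_1\bigl(\Ind(\bR)+\bR''\bigr) = \bR + \sum_i c_i \brho_i + \sum_j d_j \brho_j .
\]
Since $\Rw$ is an $\bbF_q$-subspace, $\Ind(\bR)+\bR'' \in \gInd(\bR)$. Conversely, let $\bR' \in \gInd(\bR)$ with $D_1 \bR' = \bR + \sum_j e_j \brho_j$. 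Subtracting the expression for $D_1 \Ind(\bR)$ yields
\[
D_1\bigl(\bR' - \Ind(\bR)\bigr) = \sum_j e_j \brho_j - \sum_i c_i \brho_i,
\]
and $\bR' - \Ind(\bR) \in \Rw$. Hence $\bR' - \Ind(\bR) \in \gInd(\bzero)$. Together these give $\gInd(\bR) = \Ind(\bR) + \gInd(\bzero) \subset \Rw$.

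There is no real obstacle here. The only points needing care are that the identities are equalities of formal sums in $\mathscr{S}_w^{(\bbF_q[D_1])}$ (so the telescoping identity is literal), and that $D_1$ is a nonzero scalar in $K_\infty$ (so $\Ind(\bR)$ is a genuine relation).
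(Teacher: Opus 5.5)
Your proposal is correct and follows the same route as the paper, which simply says the lemma ``follows directly from the definition.'' You spell that out: telescoping the Im--Kim--Ngo Dac steps gives $D_1\Ind(\bR) = \bR - \sum_k a_k^{(k)}\brho_{\fs_k}$, and comparing two such expressions yields the coset description $\gInd(\bR) = \Ind(\bR) + \gInd(\bzero)$.
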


\begin{proof}
It follows directly from the definition.
\end{proof}

By Lemma \ref{lem_easy}, to understand the sets $\gInd(\bR)$, it remains to determine $\gInd(\bzero)$. The next section proves that this is exactly the space spanned by quadruple-carry relations.

\section{A characterization of $\Qw$ in terms of $\gInd(\bzero)$}

\begin{definition}\label{def_LU}
For any $\fh,\ft \in \cI$ and $r \in \bbZ_{\ge 0}$ satisfying either $r>0$, or $r=0$ and $\ft=\emptyset$ (with the convention that, in the latter case, $(r,\ft)=(0,\emptyset)$ is identified with $\emptyset$), define
\begin{align*}
\bU\left( \fh,\boxed{q+r},\ft \right) \coloneq \mathbbm{1}_{\fh \neq \emptyset} \cdot \left( \fh^{+}, (q-1)*(r,\ft) \right) + \left( \fh, 1, (q-1)*(r,\ft) \right) \in \mathscr{S}^{(\bbF_q)}.
\end{align*}
\end{definition}
Note that
\[
\brho\left( \fh, \boxed{q+r}, \ft \right) = \left( \fh, q+r, \ft \right) + \mathbbm{1}_{r \neq 0} \cdot \left( \fh, q, r, \ft \right) + D_1 \cdot \bU\left( \fh,\boxed{q+r},\ft \right).
\]
Therefore, $\bU\left( \fh, \boxed{q+r}, \ft \right)$ is the formal sum of $D_1$-level tuples of $\brho\left( \fh, \boxed{q+r}, \ft \right)$.

The following Proposition \ref{prop_gInd0} is crucial.

\begin{proposition}\label{prop_gInd0}
For $\bzero \in \Rw$, we have
\[
\gInd(\bzero) = \Qw.
\]
\end{proposition}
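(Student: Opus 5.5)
The inclusion $\Qw \subset \gInd(\bzero)$ is immediate from Identity \eqref{eqn_DQ=4rho} in Lemma \ref{lem_HL}. For each quadruple-carry relation $\bQ$, the element $D_1\bQ$ is an $\bbF_q$-linear combination (with coefficients $\pm1$) of four carry relations in $\gIKNw$, so $\bQ \in \gInd(\bzero)$. The set $\gInd(\bzero)$ is an $\bbF_q$-subspace, since both the defining condition and $\Rw$ are linear. Hence the whole span $\Qw$ lies in $\gInd(\bzero)$.

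The substance is the reverse inclusion. Let $\bR' \in \gInd(\bzero)$, so that $D_1\bR' = \sum_i c_i \brho_i$ with $c_i \in \bbF_q$ and $\brho_i \in \gIKNw$. Write each carry relation as $\brho(\fh,\boxed{q+r},\ft) = \bA(\fh,q+r,\ft) + D_1\bU(\fh,\boxed{q+r},\ft)$, where $\bA(\fh,q+r,\ft) = (\fh,q+r,\ft)+\mathbbm{1}_{r\neq0}(\fh,q,r,\ft)$ is the $\bbF_q$-level part. Comparing the $D_1^0$ and $D_1^1$ components (the coefficients $c_i$ lie in $\bbF_q$, so there is no mixing) gives two conditions:
\[
\sum_i c_i \bA_i = \bzero, \qquad \bR' = \sum_i c_i \bU_i.
\]
So it suffices to show the following. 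Whenever a formal $\bbF_q$-combination $\sum_i c_i\,[\text{generator } i]$ of the carry-relation labels has vanishing $\bbF_q$-level part, the combination lies in the span of the ``quadruple'' combinations $\brho(\fh,q+r_1,\fm,\boxed{q+r_2},\ft)+\brho(\fh,q,r_1,\fm,\boxed{q+r_2},\ft)-\brho(\fh,\boxed{q+r_1},\fm,q+r_2,\ft)-\mathbbm{1}_{r_2\neq0}\brho(\fh,\boxed{q+r_1},\fm,q,r_2,\ft)$. Applying $\bU$ to this, we get that $\bR'$ is a combination of the $\bQ$'s. This is a purely combinatorial linear-algebra statement about the map $\partial\colon \bbF_q^{(\gIKNw)} \to \Sw$ sending each label to its $\bbF_q$-level part. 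The claim is that $\ker\partial$ is spanned by the quadruple combinations.

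To prove the kernel statement, I would view $\partial$ as follows. A label is a tuple $\fs$ together with a marked ``carry position'' $j$ such that $s_j \ge q$, with the convention that if $s_j = q$ then the marked entry is last or is followed by a positive entry, i.e.\ $\fs=(\fh,q+r,\ft)$. For $r>0$, $\partial$ of the label is $\fs$ plus its ``split'' $(\fh,q,r,\ft)$. Label the carry positions by pairs (tuple, position). Each tuple $\fs$ appearing in $\partial$ arises either as the unsplit version or as a split at some position. The quadruple relations encode the commutation of choosing two different positions: splitting at one position and then carrying at the other, versus the reverse order. I would then proceed by induction using the (depth, lex)-order together with a normal form. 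Modulo the quadruple combinations, every label $\brho(\fs, j)$ can be rewritten in terms of labels whose marked position is the \emph{first} admissible one, i.e.\ with $\fh \in [q]^\bullet$. This is the IKN labels, possibly after also splitting earlier entries. Indeed, the quadruple combination lets us move the box from position $q+r_2$ to an earlier position $q+r_1$, at the cost of labels on split tuples. So modulo the quadruple combinations, $\ker\partial$ reduces to the kernel of $\partial$ restricted to IKN labels.

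Finally, I would show that $\partial$ is injective on the span of $\IKNw$. By Remark \ref{rmk_IKN}, the leading tuple of $\partial\brho_{\fs}$ in the (depth, lex)-order is $\fs$ itself, and distinct $\fs\in\cI_w\setminus\BTTw$ give distinct leading terms. So any nontrivial combination has a nonzero minimal term and the kernel is trivial. The main obstacle is the reduction step: showing that the box-moving via quadruple combinations terminates and really reaches IKN labels. This requires a careful induction, e.g.\ on the number of entries $\ge q$ preceding the box, with ties broken by the (depth, lex)-order of the underlying tuple. It also requires checking that the split labels produced are again handled, including the boundary case $r_2=0$, $\ft=\emptyset$.
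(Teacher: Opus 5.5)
Your argument is correct, but it takes a different route from the paper's. Both proofs start the same way: the easy inclusion via \eqref{eqn_DQ=4rho}, then separating the $D_1^0$ and $D_1^1$ parts to get $\sum_i c_i\bA_i=\bzero$ and $\bR'=\sum_i c_i\bU_i$.

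The paper then works directly on the given coefficient vector.
\begin{itemize}
\item It takes the $\prec$-minimal tuple $\fs$ in the support and observes that $\fs$ cannot be a split tuple $(\fh,q,r,\ft)$, since its unsplit partner would be $\prec$-smaller. Hence the coefficients of all labels with underlying tuple $\fs$ sum to zero.
\item It then subtracts quadruple-carry relations anchored at the leftmost box (or, if a label $(\fh',\boxed{q})$ occurs, at that terminal box). This moves all of that mass onto a single label, whose coefficient therefore vanishes.
\item The labels created along the way have underlying tuples of larger depth, so the minimum of the support strictly increases.
\end{itemize}

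You instead prove a global normal form on the label space. Modulo the span $W$ of quadruple combinations, every label reduces to IKN labels. Moreover, $\partial$ is injective on the span of $\IKNw$ by the triangularity in Remark \ref{rmk_IKN}. Hence $\ker\partial=W$, and in fact the label space is $W\oplus\operatorname{Span}\IKNw$.

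Your route gives a structural by-product the paper does not state: every general carry relation equals an $\bbF_q$-combination of IKN relations plus $D_1$ times an element of $\Qw$. It also reuses the uniqueness of the IKN decomposition. The paper's route is purely local and never rewrites labels away from the current minimal tuple, at the price of the two-case analysis there.

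One correction to your termination step. The number of entries $\geq q$ before the box is not monotone. Rewriting $(\fh_1,q+r_1,\fm,\boxed{q+r_2},\ft)$ produces $(\fh_1,q,r_1,\fm,\boxed{q+r_2},\ft)$, whose prefix gains the entry $q$ and possibly $r_1\geq q$. Use instead $\mu$, the sum of the prefix entries that are $>q$.
\begin{itemize}
\item For the split label, $\mu$ drops from $\mu(\fh_1)+q+r_1+\mu(\fm)$ to at most $\mu(\fh_1)+r_1+\mu(\fm)$.
\item For the two labels boxed at $q+r_1$, it drops to $\mu(\fh_1)$.
\item Moreover, $\mu=0$ exactly when the prefix lies in $[q]^{\bullet}$, i.e.\ for IKN labels.
\end{itemize}
Counting prefix entries $>q$, with ties broken by increasing depth, also works. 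The boundary case $r_2=0$, $\ft=\emptyset$ uses the same combination without its fourth term. All labels produced are admissible since $r_1\geq 1$.
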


\begin{proof}
For any $\bQ \in \HLw$, by Lemma \ref{lem_HL}, there exist four or three $\brho_i \in \gIKNw$ such that $D_1 \bQ = \sum_{i} (\pm \brho_i)$.
Therefore, for any $\bQ \in \Qw$, we have $D_1 \bQ = \sum_{i} c_i\brho_i$ for finitely many $\brho_i \in \gIKNw$ with $c_i \in \bbF_q$, so $\bQ \in \gInd(\bzero)$, and hence $\Qw \subset \gInd(\bzero)$.

In the following, we prove that any $\bR \in \gInd(\bzero)$ must belong to $\Qw$. 
Since $\bR \in \gInd(\bzero)$, there exist $c\left( \fh, \boxed{q+r}, \ft \right) \in \bbF_q$ such that
\begin{align}
D_1 \bR &= \sum  c\left( \fh, \boxed{q+r}, \ft \right) \brho \left( \fh, \boxed{q+r}, \ft \right) \notag\\
&= \sum_{\substack{\fh \in \cI,\ft \in \cI, r \in \bbZ_{+} \\ \wt(\fh,q+r,\ft) = w}} c\left( \fh, \boxed{q+r}, \ft \right) \brho \left( \fh, \boxed{q+r}, \ft \right) + \sum_{\substack{\fh^\prime \in \cI \\ \wt(\fh^\prime,q) = w}} c\left( \fh^\prime, \boxed{q} \right) \brho \left( \fh^\prime, \boxed{q} \right) \label{eqn_4.00} \\
&= \sum_{\substack{\fh \in \cI,\ft \in \cI, r \in \bbZ_{+} \\ \wt(\fh,q+r,\ft) = w}} c\left( \fh, \boxed{q+r}, \ft \right) \cdot \left( \left( \fh, q+r, \ft \right) + \left( \fh, q, r, \ft \right) + D_1 \bU\left( \fh,\boxed{q+r},\ft \right) \right) \notag\\
&\quad +\sum_{\substack{\fh^\prime \in \cI \\ \wt(\fh^\prime,q) = w}} c\left( \fh^\prime, \boxed{q} \right) \cdot \left( \left( \fh^\prime, q \right) + D_1 \bU\left( \fh^\prime,\boxed{q} \right) \right). \label{eqn_4.0}
\end{align}
Since Equation \eqref{eqn_4.0} is an identity between formal sums of tuples, we must have
\begin{align}
\sum_{\substack{\fh \in \cI,\ft \in \cI, r \in \bbZ_{+} \\ \wt(\fh,q+r,\ft) = w}} c\left( \fh, \boxed{q+r}, \ft \right) \cdot \Big( \left( \fh, q+r, \ft \right) + \left( \fh, q, r, \ft \right) \Big) + \sum_{\substack{\fh^\prime \in \cI \\ \wt(\fh^\prime,q) = w}} c\left( \fh^\prime, \boxed{q} \right) \cdot \left( \fh^\prime, q \right) &= \bzero, \label{eqn_4.2} \\
\sum_{\substack{\fh \in \cI,\ft \in \cI, r \in \bbZ_{+} \\ \wt(\fh,q+r,\ft) = w}} c\left( \fh, \boxed{q+r}, \ft \right)  \bU\left( \fh,\boxed{q+r},\ft \right) + \sum_{\substack{\fh^\prime \in \cI \\ \wt(\fh^\prime,q) = w}} c\left( \fh^\prime, \boxed{q} \right) \bU\left( \fh^\prime,\boxed{q} \right) &= \bR. \label{eqn_4.3}
\end{align}

Denote by $\bc$ the sequence consisting of all the coefficients $c\left( \fh, \boxed{q+r}, \ft \right)$ and $c\left( \fh^\prime, \boxed{q} \right)$, and define
\begin{align*}
\operatorname{supp}_1 \bc &\coloneq \left\{ \left( \fh, q+r, \ft \right) ~\Big|~ c\left( \fh, \boxed{q+r}, \ft \right) \neq 0, \fh \in \cI,\ft \in \cI, r \in \bbZ_{+}, \wt(\fh,q+r,\ft) = w \right\}, \\
\operatorname{supp}_2 \bc &\coloneq \left\{ \left( \fh, q,r, \ft \right) ~\Big|~ c\left( \fh, \boxed{q+r}, \ft \right) \neq 0, \fh \in \cI,\ft \in \cI, r \in \bbZ_{+}, \wt(\fh,q+r,\ft) = w \right\}, \\
\operatorname{supp}_3 \bc &\coloneq \left\{ (\fh^\prime,q) ~\Big|~ c\left( \fh^\prime, \boxed{q} \right) \neq 0, \fh^\prime \in \cI, \wt(\fh^\prime,q) = w \right\}, \\
\operatorname{supp} \bc &\coloneq \bigcup_{i=1}^{3} \operatorname{supp}_i \bc \subset \cI_w.  
\end{align*}
If $\operatorname{supp} \bc = \emptyset$, then $\bc = \bzero$ and hence $\bR = \bzero$ by \eqref{eqn_4.3}. 
In this case, there is nothing to prove.
If $\operatorname{supp} \bc \neq \emptyset$, take $\fs = \min_{\prec} (\operatorname{supp} \bc) \in \cI_w$.
We must have $\fs \notin \operatorname{supp}_2 \bc$; otherwise, there would exist $\fh, \ft \in \cI$ and $r \in \bbZ_+$ such that $\fs=(\fh,q,r,\ft)$ with $c\left( \fh, \boxed{q+r}, \ft \right) \neq 0$, and then $(\fh, q+r, \ft) \in \operatorname{supp}_1 \bc$ and $(\fh, q+r, \ft) \prec \fs$, contradicting the minimality of $\fs$. 
Since \eqref{eqn_4.2} is an identity between formal sums of tuples, comparing the coefficients of $\fs$ in \eqref{eqn_4.2} and using $\fs \notin \operatorname{supp}_2 \bc$, we obtain 
\begin{equation}\label{eqn_4.4}
\sum_{\substack{\fh \in \cI,\ft \in \cI, r \in \bbZ_{+} \\ (\fh,q+r,\ft) = \fs}} c\left( \fh, \boxed{q+r}, \ft \right) + \sum_{\substack{\fh^\prime \in \cI \\ (\fh^\prime,q) = \fs}} c\left( \fh^\prime, \boxed{q} \right) = 0.
\end{equation}
Note that there is at most one $\fh^\prime \in \cI$ satisfying $(\fh^\prime, q) = \fs$. 
We distinguish two cases.

\textbf{Case 1}: $\fs \notin \operatorname{supp}_3 \bc$. 
In this case, we must have $\fs \in \operatorname{supp}_1 \bc$, and Equation \eqref{eqn_4.4} becomes 
\begin{equation}\label{eqn_4.5}
\sum_{\substack{\fh \in \cI,\ft \in \cI, r \in \bbZ_{+} \\ (\fh,q+r,\ft) = \fs}} c\left( \fh, \boxed{q+r}, \ft \right) = 0.
\end{equation}
By \eqref{eqn_4.5} and $\fs \in \operatorname{supp}_1 \bc$, the following finite set of marked tuples
\[
\operatorname{Mark}_{\fs} \coloneq \left\{ \left( \fh, \boxed{q+r}, \ft \right) ~\Big|~ \left( \fh, q+r, \ft \right) = \fs, c\left( \fh, \boxed{q+r}, \ft \right) \neq 0 \right\}
\]
contains at least two elements.
We may therefore suppose 
\[
\operatorname{Mark}_{\fs} = \left\{ \left( \fh_i, \boxed{q+r_i}, \ft_i \right) ~\Big|~ i=0,1,\ldots,m \right\}
\] 
for some $m \in \bbZ_+$, and the marked position in $\left( \fh_0, \boxed{q+r_0}, \ft_0 \right)$ is the leftmost one among all marked positions in $\left( \fh_i, \boxed{q+r_i}, \ft_i \right)$, $i=0,1,\ldots,m$.
Now, for any $i \in [m]$, since 
\[
\left( \fh_i, q+r_i, \ft_i \right) = \fs = \left( \fh_0, q+r_0, \ft_0 \right),
\]
we must have
\[
\fs = \left( \fh_0, q+r_0, \fm_i, q+r_i, \ft_i \right) \quad\text{for some } \fm_i \in \cI.
\]
Notice that Equation \eqref{eqn_4.5} becomes
\begin{equation}\label{eqn_4.8}
\sum_{i=0}^{m} c_i = 0, \quad\text{where } c_i = c\left( \fh_i, \boxed{q+r_i}, \ft_i \right).
\end{equation}
Adding the identity (see Equation \eqref{eqn_DQ=4rho} of Lemma \ref{lem_HL})
\begin{align*}
& -D_1 \sum_{i=1}^{m} c_i \bQ\left( \fh_0, \boxed{q+r_0}, \fm_i, \boxed{q+r_i}, \ft_i \right) \\
=& \sum_{i=1}^{m} c_i \Big( \brho\left( \fh_0, \boxed{q+r_0}, \ft_0 \right) + \brho\left( \fh_0, \boxed{q+r_0}, \fm_i, q, r_i, \ft_i \right) \\
&\qquad - \brho\left( \fh_i, \boxed{q+r_i}, \ft_i \right) - \brho\left( \fh_0, q,r_0, \fm_i, \boxed{q+r_i}, \ft_i \right) \Big)
\end{align*}
to Equation \eqref{eqn_4.00}, and using \eqref{eqn_4.8}, we deduce that 
\begin{align*}
\sum_{\substack{\fh \in \cI,\ft \in \cI, r \in \bbZ_{+} \\ \wt(\fh,q+r,\ft) = w}} c^\prime\left( \fh, \boxed{q+r}, \ft \right) \cdot \Big( \left( \fh, q+r, \ft \right) + \left( \fh, q, r, \ft \right) \Big) + \sum_{\substack{\fh^\prime \in \cI \\ \wt(\fh^\prime,q) = w}} c^\prime\left( \fh^\prime, \boxed{q} \right) \cdot \left( \fh^\prime, q \right) &= \bzero,  \\
\sum_{\substack{\fh \in \cI,\ft \in \cI, r \in \bbZ_{+} \\ \wt(\fh,q+r,\ft) = w}} c^\prime\left( \fh, \boxed{q+r}, \ft \right)  \bU\left( \fh,\boxed{q+r},\ft \right) + \sum_{\substack{\fh^\prime \in \cI \\ \wt(\fh^\prime,q) = w}} c^\prime\left( \fh^\prime, \boxed{q} \right) \bU\left( \fh^\prime,\boxed{q} \right) &= \bR^\prime, 
\end{align*}
where
\[
\bR^\prime = \bR - \sum_{i=1}^{m} c_i \bQ\left( \fh_0, \boxed{q+r_0}, \fm_i, \boxed{q+r_i}, \ft_i \right),
\]
and that the coefficient sequence $\bc^\prime$ is either $\bzero$ or satisfies 
\[
\min(\operatorname{supp} \bc^{\prime}) \succ \min(\operatorname{supp} \bc).
\]

\textbf{Case 2}: $\fs \in \operatorname{supp}_3 \bc$. 
In this case, there exists a unique $\fh^{\prime} \in \cI$ such that $(\fh^\prime,q)=\fs$ and $c\left(\fh^\prime,\boxed{q}\right) \neq 0$.
Equation \eqref{eqn_4.4} becomes 
\begin{equation}\label{eqn_4.9}
\sum_{\substack{\fh \in \cI,\ft \in \cI, r \in \bbZ_{+} \\ (\fh,q+r,\ft) = \fs}} c\left( \fh, \boxed{q+r}, \ft \right) +  c\left( \fh^\prime, \boxed{q} \right) = 0.
\end{equation}
By \eqref{eqn_4.9} and $c\left(\fh^\prime,\boxed{q}\right) \neq 0$, the following finite set of marked tuples
\[
\operatorname{Mark}_{\fs}^\prime \coloneq \left\{ \left( \fh, \boxed{q+r}, \ft \right) ~\Big|~ \left( \fh, q+r, \ft \right) = \fs, c\left( \fh, \boxed{q+r}, \ft \right) \neq 0, r>0 \right\}
\]
is non-empty. 
We may assume
\[
\operatorname{Mark}_{\fs}^\prime = \left\{ \left( \fh_i, \boxed{q+r_i}, \ft_i \right) ~\Big|~ i=1,2,\ldots,m^\prime \right\}
\] 
for some $m^\prime \in \bbZ_+$. 
Now, for any $i \in [m^\prime]$, since 
\[
\left( \fh_i, q+r_i, \ft_i \right) = \fs = \left(\fh^\prime,q\right),
\]
we must have 
\[
\fs = \left( \fh_i, q+r_i, \fm_i, q \right) \quad\text{for some } \fm_i \in \cI.
\]
Notice that Equation \eqref{eqn_4.9} becomes
\begin{equation}\label{eqn_4.10}
\sum_{i=1}^{m^\prime} c_i  +  c\left( \fh^\prime, \boxed{q} \right) = 0, \quad\text{where } c_i = c\left( \fh_i, \boxed{q+r_i}, \ft_i \right).
\end{equation}
Adding the identity
\begin{align*}
& D_1\sum_{i=1}^{m^\prime} c_i\bQ\left( \fh_i,\boxed{q+r_i}, \fm_i, \boxed{q} \right) \\
=& \sum_{i=1}^{m^\prime} c_i \Big( \brho\left( \fh^\prime, \boxed{q} \right) + \brho\left( \fh_i, q,r_i, \fm_i, \boxed{q} \right) -\brho\left( \fh_i, \boxed{q+r_i}, \ft_i \right) \Big)
\end{align*}
to Equation \eqref{eqn_4.00}, and using \eqref{eqn_4.10}, by the same reasoning as in Case 1, we obtain a new 
\[
\bR^\prime = \bR + \sum_{i=1}^{m^\prime} c_i \bQ\left( \fh_i,\boxed{q+r_i}, \fm_i, \boxed{q} \right),
\]
and the corresponding coefficient sequence $\bc^\prime$ is either $\bzero$ or satisfies 
\[
\min(\operatorname{supp} \bc^{\prime}) \succ \min(\operatorname{supp} \bc).
\]

In both cases, we have shown that there exist finitely many $\bQ_i \in \HLw$ with $c_i \in \bbF_q$ such that
\[
\bR^{\prime} = \bR - \sum_i c_i\bQ_i,
\]
and the corresponding coefficient sequence $\bc^\prime$ is either $\bzero$ or satisfies 
\[
\min(\operatorname{supp} \bc^{\prime}) \succ \min(\operatorname{supp} \bc).
\]
If $\bc^\prime = \bzero$, then $\bR^\prime = \bzero$ and hence $\bR \in \Qw$. Otherwise, we repeat the process, and find finitely many $\bQ_i^\prime \in \HLw$ with $c_i^\prime \in \bbF_q$ such that
\[
\bR^{\prime\prime} = \bR^\prime - \sum_i c_i^\prime \bQ_i^\prime,
\]
and the corresponding coefficient sequence $\bc^{\prime\prime}$ is either $\bzero$ or satisfies 
\[
\min(\operatorname{supp} \bc^{\prime\prime}) \succ \min(\operatorname{supp} \bc^\prime).
\]
Since $\cI_w$ is a finite set, we will eventually reduce the coefficient sequence $\bc$ to zero, and thus conclude that $\bR = \sum_i c_i \bQ_i$ is an $\bbF_q$-linear combination of quadruple-carry relations.
This completes the proof of $\gInd(\bzero) = \Qw$. 
\end{proof}

\begin{remark}
The idea behind the proof of Proposition \ref{prop_gInd0} can be interpreted using the language of graph theory. 
Indeed, consider the digraph with vertex set $\cI_w$. 
For two vertices $\fs_1$ and $\fs_2$, draw a weighted directed edge from $\fs_1$ to $\fs_2$ if $\fs_1=(\fh,q+r,\ft)$ and $\fs_2=(\fh,q,r,\ft)$ for some $\fh,\ft,r$, and assign weight $c\left( \fh,\boxed{q+r},\ft \right) \in \bbF_q$ to this directed edge.
Equation \eqref{eqn_DQ=4rho} says that a quadruple-carry relation $\bQ(\fh,\boxed{q+r_1},\fm,\boxed{q+r_2},\ft)$ corresponds to the boundary of a square in this digraph, see Figure \ref{fig:quadruple-carry-square}.
The reduction in the proof subtracts such square boundaries in increasing order of their minimal vertex, thereby cancelling all edges.  
The case $r=0$ and $\ft=\emptyset$ is slightly exceptional; the formal-sum argument above includes it (Case 2) without requiring a separate graph-theoretic convention.
\begin{figure}[htbp]
\centering
\begin{tikzpicture}[
vertex/.style={
circle,
fill=black,
inner sep=2.3pt
},
edge/.style={
line width=0.9pt
},
markstyle/.style={
sloped,
allow upside down,
fill=white,
inner sep=0.4pt
}
]
\coordinate (S12) at (0,2.8);
\coordinate (S1)  at (-2.8,0);
\coordinate (S2)  at (2.8,0);
\coordinate (S)   at (0,-2.8);
\draw[edge] (S) -- (S2)
node[pos=0.5,markstyle]
{\(\scriptstyle\blacktriangleright\)};
\draw[edge] (S12) -- (S2)
node[pos=0.5,markstyle]
{\(\scriptstyle\vartriangleleft\)};
\draw[edge] (S1) -- (S12)
node[pos=0.5,markstyle]
{\(\scriptstyle\blacktriangleright\)};
\draw[edge] (S1) -- (S)
node[pos=0.5,markstyle]
{\(\scriptstyle\vartriangleleft\)};
\node[vertex] at (S12) {};
\node[vertex] at (S1) {};
\node[vertex] at (S2) {};
\node[vertex] at (S) {};
\node[above=10pt] at (S12)
{\small\( \fs_{12}=(\fh,q,r_1,\fm,q,r_2,\ft)\)};
\node[left=10pt] at (S1)
{\small\( \fs_1=(\fh,q,r_1,\fm,q+r_2,\ft)\)};
\node[right=10pt] at (S2)
{\small\( \fs_2=(\fh,q+r_1,\fm,q,r_2,\ft)\)};
\node[below=10pt] at (S)
{\small\( \fs=(\fh,q+r_1,\fm,q+r_2,\ft)\)};
\end{tikzpicture}
\caption{The square boundary corresponding to a quadruple-carry relation. 
In the associated relations, $\blacktriangleright$ denotes the positive sign and $\vartriangleright$ denotes the negative sign.}
\label{fig:quadruple-carry-square}
\end{figure}
\end{remark}

\begin{corollary}\label{cor_coset}
For any relation $\boldsymbol{R} \in \mathscr{R}_w^{(\mathbb{F}_q)}$, 
\[
\gInd(\bR) = \Ind(\bR) + \Qw 
\]
is a coset of $\Qw$ in $\Rw$.
\end{corollary}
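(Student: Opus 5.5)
The corollary follows by combining Lemma \ref{lem_easy} with Proposition \ref{prop_gInd0}. Fix $\bR \in \Rw$. Lemma \ref{lem_easy} gives $\Ind(\bR) \in \gInd(\bR)$ and $\gInd(\bR) = \Ind(\bR) + \gInd(\bzero)$. Proposition \ref{prop_gInd0} identifies $\gInd(\bzero)$ with $\Qw$, so substituting yields $\gInd(\bR) = \Ind(\bR) + \Qw$.

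To see that this is a coset of $\Qw$ inside $\Rw$, I will note three facts. First, $\Ind(\bR) \in \Rw$ by the definition of $\Ind$. Second, $\Qw \subset \Rw$ is an $\bbF_q$-linear subspace by Definition \ref{def_Qw} and Lemma \ref{lem_HL}. Hence $\Ind(\bR) + \Qw$ is an additive coset of $\Qw$ in $\Rw$.

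To be safe about the equality $\gInd(\bR) = \Ind(\bR) + \gInd(\bzero)$ in Lemma \ref{lem_easy}, I would re-verify it directly, since it is the only nontrivial input besides Proposition \ref{prop_gInd0}. Write $D_1 \Ind(\bR) = \bR + \sum_i c_i \brho_i$ with $\brho_i \in \gIKNw$; this holds because the algorithm subtracts IKN relations, and these lie in $\gIKNw$.

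\begin{itemize}
\item[$\subseteq$:] If $\bR' \in \gInd(\bR)$, then $D_1(\bR' - \Ind(\bR))$ is an $\bbF_q$-combination of carry relations. Moreover $\bR' - \Ind(\bR) \in \Rw$. Hence $\bR' - \Ind(\bR) \in \gInd(\bzero)$.
\item[$\supseteq$:] Conversely, adding an element of $\gInd(\bzero)$ to $\Ind(\bR)$ gives an element of $\gInd(\bR)$ by the same linearity.
\end{itemize}

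No step is a real obstacle: all the substance is in Proposition \ref{prop_gInd0}, which is assumed, and the rest is linearity.
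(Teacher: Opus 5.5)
Your proof is correct and is the paper's argument: substitute $\gInd(\bzero)=\Qw$ from Proposition \ref{prop_gInd0} into Lemma \ref{lem_easy}. Your re-verification of Lemma \ref{lem_easy} is also sound. It uses that the Im--Kim--Ngo Dac algorithm yields $D_1\Ind(\bR)=\bR+\sum_i c_i\brho_i$ with $\brho_i\in\IKNw\subset\gIKNw$ and $c_i\in\bbF_q$; the paper simply treats that lemma as immediate from the definitions.
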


\begin{proof}
It follows from Proposition \ref{prop_gInd0} and Lemma \ref{lem_easy}.
\end{proof}

The following notation is convenient in the rest of this paper.
\begin{definition}\label{def_q+}
For any $a \in \bbZ_+$ and any non-empty tuple $\fs=(s_1,\widetilde{\fs}) \in \cI_{>0}$ (where $s_1 \in \bbZ_{+}$ and $\widetilde{\fs} \in \cI$), the notation ${}^{a+}\fs$ is defined by ${}^{a+}\fs=(a+s_1,\widetilde{\fs})$.
The notation $\boxed{{}^{a+}\fs}$ means $(\boxed{a+s_1},\widetilde{\fs})$. 
Take the convention ${}^{a+}\emptyset = (a)$ and ${}^{+}\fs = {}^{1+}\fs$.

For example, ${}^{q+}(1,2)=(q+1,2)$ and ${}^{q+}(\emptyset,2)={}^{q+}(2)=(q+2)$.
	
Both notations are extended linearly to formal sums. 
More generally, whenever a tuple-valued argument of $\brho$, $\bU$, $\bQ$, etc., is a formal sum, the resulting expression is interpreted multilinearly.

For example, $\brho(\fh,\boxed{q+r},\ft)$ can be rewritten as $\brho(\fh,\boxed{{}^{q+}(r,\ft)})$, and $\brho(\fh,\boxed{q+r},\ft_1+\ft_2)$ means $\brho(\fh,\boxed{q+r},\ft_1) + \brho(\fh,\boxed{q+r},\ft_2)$.
\end{definition}

\begin{lemma}\label{lem_gIndQ}
For any $\bQ \in \Qw$, we have 
\[
\gInd(\bQ) = \Qw.
\]
\end{lemma}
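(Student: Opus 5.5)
By Corollary \ref{cor_coset}, $\gInd(\bQ)=\Ind(\bQ)+\Qw$ is a coset of $\Qw$. It therefore suffices to show that this coset contains one element of $\Qw$, for example $\bzero$; then the coset is $\Qw$ itself. So the plan is to exhibit, for each $\bQ\in\Qw$, an explicit element of $\gInd(\bQ)$ lying in $\Qw$. The natural candidate is $\bzero$, which means showing $\bzero\in\gInd(\bQ)$. Unwinding the definition, this asks for
\[
D_1\cdot\bzero=\bQ+\sum_i c_i\brho_i,
\]
that is, for $\bQ$ itself to be an $\bbF_q$-combination of carry relations $\brho_i\in\gIKNw$.

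By linearity of the defining condition (if $\bR'_j\in\gInd(\bR_j)$ then $\sum a_j\bR'_j\in\gInd(\sum a_j\bR_j)$, directly from the definition), it is enough to treat a single generator $\bQ=\bQ(\fh,\boxed{q+r_1},\fm,\boxed{q+r_2},\ft)\in\HLw$. The generator $\bQ$ has only $\bbF_q$-level tuples, so writing it as an $\bbF_q$-combination of carry relations would force the $D_1$-parts $\bU$ to cancel. That is not obviously available.

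So a better route is to produce an element of $\gInd(\bQ)$ in $\Qw$ other than $\bzero$. I would try to find $\bQ'\in\Qw$ with
\[
D_1\bQ'=\bQ+\sum_i c_i\brho_i.
\]
Reading Identity \eqref{eqn_DQ=4rho} backwards, each of the eight tuples of $\bQ$ is the leading ($\bbF_q$-level) part of some carry relation:
\begin{itemize}
\item a term like $(\fh,(q+r_1,\fm)^+,(q-1)*(r_2,\ft))$ contains the entry $q+r_1$ at the same position, so it is the leading term of $\brho(\fh,\boxed{q+r_1},\ldots)$;
\item a term like $(\fh,q,r_1,\ldots)$ pairs with it as the $\mathbbm{1}_{r\neq0}$ companion;
\item the terms starting $(\fh^+,(q-1)*\cdots)$ or $(\fh,1,(q-1)*\cdots)$ contain a component $q-1+r_1$ or $q-1+q+r_2$, and these are of the form $q+r$ when the stuffle merges $q-1$ with something.
\end{itemize}
Concretely, I would subtract from $\bQ$ the carry relations whose leading parts are the tuples of $\bQ$ containing an entry $\ge q$, grouped in the pairs $(\fs,\fs')$ with $\fs=(\ldots,q+r,\ldots)$ and $\fs'=(\ldots,q,r,\ldots)$. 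The first four terms pair up exactly this way: terms one and three with the entry $q+r_1$ versus $q,r_1$, and terms two and four likewise. After this subtraction, what remains should be $D_1$ times a combination of $\bU$'s. The aim is to show that this combination is again a quadruple-carry expression, via Lemma \ref{lem_HL} applied at shifted positions.

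The main obstacle is this bookkeeping. The leftover must be $D_1$ times an element of $\Qw$, rather than merely an element of $\Rw$, and the negative stuffle terms do not obviously pair off. If it fails, I would fall back on an alternative that avoids explicit computation. Since $\gInd(\bQ)$ is a coset, one could instead show $\Ind(\bQ)\in\Qw$ using the filtration of Lemma \ref{lem_Ind_nilp} and induction on $s(\bR)$. The catch is that this needs $\Ind$ to preserve $\Qw$, which is essentially the same claim.

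Given this, I would first verify the identity on the smallest example $\bQ(\boxed{q+1},\boxed{q})$ with the computer-style expansion, and only then generalize the pairing.
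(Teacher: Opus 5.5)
You have a genuine gap. Your reductions are fine and match the paper: $\gInd(\bQ)$ is a coset of $\Qw$ and the defining condition is linear, so it suffices to show $\bzero\in\gInd(\bQ)$ for a single generator $\bQ=\bQ(\fh,\boxed{q+r_1},\fm,\boxed{q+r_2},\ft)\in\HLw$. That is, you need $\bQ\in\Span_{\bbF_q}\gIKNw$.

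The trouble is that you then abandon exactly this step, and the route you switch to cannot avoid it. Suppose $D_1\bQ'=\bQ+\sum_i c_i\brho_i$ with $\bQ'\in\Qw$. Then $D_1\bQ'$ is itself an $\bbF_q$-combination of carry relations by \eqref{eqn_DQ=4rho}, so $\bQ\in\Span_{\bbF_q}\gIKNw$ anyway. Some combination of carry relations must therefore have its $D_1$-level parts cancel completely, and there is no way around proving this. Your concrete pairing also only absorbs the first four terms of $\bQ$. The four negative $\bbF_q$-level terms remain, and you give no way to absorb them. So the leftover is not ``$D_1$ times a combination of $\bU$'s'', and the key step is left undone.

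The missing idea is an explicit identity. In the notation of Definition \ref{def_ECL}, a carry relation reads $\brho(\fh,\boxed{{}^{q+}\fs})=(\fh,\sfC\fs)+D_1(\sfE\fh,\sfL\fs)$. Put $\fa=(r_1,\fm)$ and $\fb=(r_2,\ft)$, with $\fb=\emptyset$ when $r_2=0$. Then
\[
\bQ=\brho\left(\fh,\boxed{{}^{q+}(\sfE\fa,\sfL\fb)}\right)-\brho\left(\sfE\fh,\fa,\boxed{{}^{q+}\sfL\fb}\right)-\brho\left(\sfE\fh,\sfL\fa,\boxed{{}^{q+}\fb}\right),
\]
which expands into the six carry relations of \eqref{888}, or four if $\fh=\emptyset$.

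The $\bbF_q$-level parts reproduce $\bQ$ because $\sfL(\fa,\sfC\fb)=(\sfL\fa,\sfC\fb)+(\fa,\sfC\sfL\fb)$. Concretely, split $(q-1)*(r_1,\fm,q+r_2,\ft)$ and $(q-1)*(r_1,\fm,q,r_2,\ft)$ according to where the $q-1$ goes:
\begin{itemize}
\item if it enters $(r_1,\fm)$, box at $q+r_2$;
\item if it enters the tail, box at ${}^{q+}$ of $(q-1)*(r_2,\ft)$.
\end{itemize}
Do not box at merged entries such as $q-1+r_1$, as your third bullet suggests. That creates tuples $(\ldots,q,r_1-1,\ldots)$ not present in $\bQ$, and for $r_1=1$ it is not even a legal carry relation.

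The $D_1$-level parts sum to $(\sfE\fh,\bS_2)$ with
\[
\bS_2=\sfL(\sfE\fa,\bB)-(\sfE\fa,\sfL\bB)-(\sfE\sfL\fa,\bB),\qquad \bB=\sfL\fb,
\]
and this vanishes by the stuffle identity \eqref{eqn_ELC_1}. So, contrary to your expectation, the $D_1$-parts do cancel exactly, and this cancellation is the whole content of the lemma.
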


\begin{proof}
By Corollary \ref{cor_coset} and the definitions of $\gInd$ and $\Qw$, it suffices to prove that $\bzero \in \gInd(\bQ)$ for any $\bQ \in \HLw$.
We may therefore assume
\[
\bQ = \bQ\left( \fh,\boxed{q+r_1},\fm,\boxed{q+r_2},\ft \right),
\]
where $\fh,\fm,\ft \in \cI$, $r_1 \in \bbZ_+$, $r_2 \in \bbZ_{\geqslant 0}$, and $\wt\left( \fh,q+r_1,\fm, q+r_2,\ft \right) = w$, such that either $r_2>0$, or $r_2=0$ and $\ft=\emptyset$ (with the convention $(r_2,\ft)=\emptyset$).
We claim that
\begin{align}
\bQ -& \brho\left( \fh, \boxed{(q+r_1,\fm)^{+}}, (q-1)*(r_2,\ft) \right) - \brho\left( \fh, \boxed{q+r_1}, \fm, 1, (q-1)*(r_2,\ft) \right) \notag\\
+& \brho\left( \fh, 1, r_1, \fm, \boxed{{}^{q+}((q-1)*(r_2,\ft))} \right) + \brho\left( \fh, 1, (q-1)*(r_1,\fm), \boxed{q+r_2}, \ft \right) \notag\\
+& \mathbbm{1}_{\fh \neq \emptyset}\brho\left( \fh^+, r_1, \fm, \boxed{{}^{q+}((q-1)*(r_2,\ft))} \right) + \mathbbm{1}_{\fh \neq \emptyset}\brho\left( \fh^+, (q-1)*(r_1,\fm), \boxed{q+r_2}, \ft \right) = \bzero. \label{888}
\end{align}
Indeed, using \eqref{def_Q} and \eqref{def_rho}, and collecting terms multilinearly, we have
\[
\text{LHS of \eqref{888}} = \mathbbm{1}_{\fh\neq\emptyset}(\fh^{+},\bS_1) + (\fh,1,\bS_1) + D_1 \mathbbm{1}_{\fh\neq\emptyset}(\fh^{+},\bS_2) + D_1(\fh,1,\bS_2),
\]
where 
\begin{align}
\bS_1 =&~ -(q-1)*(r_1,\fm,q+r_2,\ft) -\mathbbm{1}_{r_2 \neq 0} (q-1)*(r_1,\fm,q,r_2,\ft) \notag\\
&~ +\Big( (r_1,\fm),{}^{q+}\left((q-1)*(r_2,\ft)\right) \Big) + \Big( (r_1,\fm),q,(q-1)*(r_2,\ft) \Big) \notag\\
&~ +\Big( (q-1)*(r_1,\fm), q+r_2,\ft \Big) + \mathbbm{1}_{r_2 \neq 0} \Big( (q-1)*(r_1,\fm), q,r_2,\ft \Big), \label{S1}
\end{align}
and
\begin{align}
\bS_2 =&~ -(q-1)*\Big( (r_1,\fm)^{+},(q-1)*(r_2,\ft) \Big) - (q-1)*\Big( r_1,\fm,1,(q-1)*(r_2,\ft) \Big) \notag\\
&+\Big( (r_1,\fm)^{+},(q-1)*((q-1)*(r_2,\ft)) \Big) +\Big( r_1,\fm,1,(q-1)*((q-1)*(r_2,\ft)) \Big) \notag\\
&+ \Big( \left((q-1)*(r_1,\fm)\right)^{+},(q-1)*(r_2,\ft) \Big) + \Big( (q-1)*(r_1,\fm),1,(q-1)*(r_2,\ft) \Big). \label{S2}
\end{align}
It suffices to show that $\bS_1=\bzero$ and $\bS_2=\bzero$.
We first show $\bS_1 = \bzero$ in the case $r_2 \neq 0$.
By the definition of the stuffle product, we have
\begin{align*}
(q-1)*(r_1,\fm,q+r_2,\ft) =&~ \Big( (q-1)*(r_1,\fm), q+r_2,\ft \Big) + (r_1,\fm,2q-1+r_2,\ft) \\
&~ +\Big( r_1,\fm,q+r_2,(q-1)*\ft \Big), \\
(q-1)*(r_1,\fm,q,r_2,\ft) =&~ \Big( (q-1)*(r_1,\fm), q,r_2,\ft \Big) + (r_1,\fm,2q-1,r_2,\ft) \\
&+~ \Big(r_1,\fm,q,(q-1)*(r_2,\ft)\Big), \\
\Big( (r_1,\fm),{}^{q+}\left((q-1)*(r_2,\ft)\right) \Big) =&~ (r_1,\fm,2q-1,r_2,\ft) + (r_1,\fm,2q-1+r_2,\ft)  \\
&~ +\Big( r_1,\fm,q+r_2,(q-1)*\ft \Big).
\end{align*}
Substituting the above three identities into \eqref{S1}, we obtain $\bS_1=\bzero$ for the case $r_2 \neq 0$. 
The case $r_2=0$ is easier.
And the proof of $\bS_2=\bzero$ is similar.
Thus, \eqref{888} holds true.
It follows that $\bzero \in \gInd(\bQ)$ by definition.
\end{proof}

The remaining ingredient for the proof of $\Rw=\Qw$ is the surjectivity of $\gInd$; this is established in the next section.

\section{A certain surjectivity of $\gInd$}

In this section, our goal is to prove that for any $\bR \in \Rw$, there exists $\bR^\prime \in \Rw$ such that $\bR \in \gInd(\bR^\prime)$.
We start with an easy lemma.

\begin{lemma}\label{lem_min_tuple_imply_dim_lower_bound}
For any field $L$, let $\mathscr{S}^{(L)}$ be the $L$-linear space of formal sums of tuples. 
If a subset $\mathcal{S}$ of $\mathscr{S}^{(L)} \setminus \{\bzero\}$ satisfies $s(\bS_1) \neq s(\bS_2)$ for any two distinct elements $\bS_1 \neq \bS_2$ in $\mathcal{S}$,
then $\mathcal{S}$ is an $L$-linearly independent set in $\mathscr{S}^{(L)}$.
\end{lemma}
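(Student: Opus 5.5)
We argue by contradiction, using the standard leading-term argument. Suppose $\mathcal{S}$ is $L$-linearly dependent. Then there exist finitely many distinct elements $\bS_1,\ldots,\bS_n \in \mathcal{S}$, with $n \ge 1$, and nonzero scalars $a_1,\ldots,a_n \in L$ such that $\sum_{i=1}^{n} a_i \bS_i = \bzero$.

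Each $\bS_i$ is nonzero, so its minimal tuple $s(\bS_i)$ is defined. By hypothesis the tuples $s(\bS_1),\ldots,s(\bS_n)$ are pairwise distinct. Since $\prec$ is a total order (indeed a well-order), there is a unique index $j$ for which $s(\bS_j)$ is $\prec$-minimal among them. After relabeling, we may assume $j=1$.

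Now compare the coefficient of the tuple $\fs_0 \coloneq s(\bS_1)$ on both sides of $\sum_i a_i \bS_i = \bzero$.

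\begin{itemize}
\item In $\bS_1$, the coefficient of $\fs_0$ is nonzero by the definition of $s(\bS_1)$.
\item For $i \ge 2$, we have $s(\bS_i) \succ \fs_0$ strictly. By minimality of $s(\bS_i)$, every tuple appearing in $\bS_i$ is $\succeq s(\bS_i) \succ \fs_0$, so $\fs_0$ does not appear in $\bS_i$.
\end{itemize}

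Hence the coefficient of $\fs_0$ in $\sum_i a_i \bS_i$ equals $a_1$ times a nonzero element of $L$. This is nonzero because $L$ is a field, which contradicts $\sum_i a_i \bS_i = \bzero$. Two formal sums are equal exactly when their coefficients agree tuple by tuple, so this comparison is legitimate.

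There is no real obstacle here. The only points to state carefully are that $\prec$ is total, so a unique minimum exists among distinct tuples, and that formal sums are compared coefficientwise. If the elements of $\mathcal{S}$ are allowed to have mixed weights, nothing changes, since $\prec$ is defined on all of $\cI$.
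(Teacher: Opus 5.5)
Your proposal is correct and follows the paper's argument. In both, one takes the $\prec$-minimal tuple among the pairwise distinct $s(\bS_i)$ that carry a nonzero coefficient, and observes that its coefficient in the linear combination cannot vanish; the only difference is cosmetic, in that you argue by contradiction after discarding zero coefficients, whereas the paper orders the $s(\bS_i)$ and picks the first index $i_0$ with $c_{i_0}\neq 0$.
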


\begin{proof}
For any finitely many distinct elements $\bS_1,\ldots,\bS_m$ in $\mathcal{S}$, since the $s(\bS_i)$ are distinct by hypothesis, we may assume $s(\bS_1) \prec \cdots \prec s(\bS_m)$.
For any $(c_1,\ldots,c_m) \in L^{m} \setminus \{\bzero\}$, suppose that $i_0$ is the minimal index such that $c_{i_0} \neq 0$. 
Then $s(\bS_{i_0})$ is the minimal tuple appearing in $\sum_{i=1}^{m} c_i\bS_i$, and therefore $\sum_{i=1}^{m} c_i\bS_i \neq \bzero$.
So $\bS_1,\ldots,\bS_m$ are $L$-linearly independent.
\end{proof}

Recall that $\bU(\fh, \boxed{q+r}, \ft)$ is defined in Definition \ref{def_LU}, which is the formal sum of $D_1$-level tuples of the carry relation $\brho(\fh, \boxed{q+r}, \ft)$.

\begin{definition}\label{def_Uw}
Define
\begin{align*}
\Uw \coloneq \Span_{\bbF_q} \Big( &\left\{ \bU(\fh, \boxed{q+r}, \ft) ~\Big|~ \fh,\ft \in \cI, r \in \bbZ_+, \wt(\fh, q+r, \ft) =w \right\} \\ 
&\bigsqcup \left\{ \bU(\fh, \boxed{q}) ~\Big|~ \fh \in \cI, \wt(\fh,q) = w \right\} \Big) \subset \Sw.
\end{align*}
Define $\mathscr{U}^{(\bbF_q)} = \bigoplus_{w=0}^{\infty} \Uw$.
\end{definition}

Recall that the sequence $\{d_{w}^{(K)}\}_{w \ge 1}$ is defined in Theorem \ref{thm_CCMIKLNP}.

\begin{lemma}\label{lem_dimUw}
For any weight $w \ge 1$, we have 
\[
\dim_{\bbF_q} \Uw \ge 2^{w-1} - d_w^{(K)}.
\]
\end{lemma}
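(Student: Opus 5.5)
Since $\#\cI_w = 2^{w-1}$ and $\#\BTTw = d_w^{(K)}$ (the generating function of Theorem \ref{thm_CCMIKLNP} counts Todd--Thakur tuples: $\fh\in[q]^\bullet$ followed by a last entry $z\le q-1$), it suffices to produce a family of elements of $\Uw$ indexed injectively by $\cI_w\setminus\BTTw$ whose minimal tuples $s(\cdot)$ are pairwise distinct. Lemma \ref{lem_min_tuple_imply_dim_lower_bound} then gives linear independence and the bound $\dim_{\bbF_q}\Uw \ge 2^{w-1}-d_w^{(K)}$.

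For each $\fs\in\cI_w\setminus\BTTw$, I write $\fs=(\fh,q+r,\ft)$ with $\fh\in[q]^\bullet$, as in the description of $\IKNw$, and take $\bU_{\fs}\coloneq\bU(\fh,\boxed{q+r},\ft)$. The key step is computing $s(\bU_{\fs})$. Among the terms of $\bU$, depth is maximal in the pieces $(\fh,1,(q-1)*(r,\ft))$ and $(\fh^+,(q-1)*(r,\ft))$ where the stuffle does not merge, so the minimal tuple in the (depth, lex)-order lies in the component of smallest depth. The merged terms of $(q-1)*(r,\ft)$ have depth $\dep(r,\ft)$, the smallest possible. When $\fh\neq\emptyset$, the term $(\fh^+,\cdot)$ has depth one less than $(\fh,1,\cdot)$, so the minimum comes from $(\fh^+,(q-1)\widetilde{*}(r,\ft))$. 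Among the sticky terms, the lexicographically smallest adds $q-1$ to the last entry (when $\dep(r,\ft)\ge2$), or the unique term when $\ft=\emptyset$. A clean candidate is therefore
\[
s(\bU_{\fs})=\bigl(\fh^{+},\,r,\ft_{\text{with last entry}+(q-1)}\bigr),
\]
with the analogous formula $(1,\ldots)$ when $\fh=\emptyset$ and the obvious modification for $(r,\ft)=\emptyset$, namely $(\fh^+,q-1)$ or $(1,q-1)$. This must be checked carefully, including a possible cancellation of equal tuples inside $(q-1)*(r,\ft)$. Since the coefficients are in $\bbF_p$ and distinct positions of the sticky sum give distinct tuples unless entries coincide, I expect multiplicity issues only in the lex comparison, not at the minimum.

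The main obstacle is injectivity: from $s(\bU_{\fs})$ one must recover $\fs$, but the map $\fs\mapsto s(\bU_{\fs})$ may fail to be injective. For instance, $\fh^+$ loses the boundary between $\fh$ and $r$, since $(\fh^+,r,\ldots)$ with last entry of $\fh$ equal to $q$ gives entry $q+1$. If this happens, I would instead choose the carry position differently, still with $\fh\in[q]^\bullet$ but possibly using the rightmost rather than leftmost large entry, or use the freedom in $\gIKNw$ over all $\fh\in\cI$ to pick a decomposition whose minimal term is injective. The decoding argument goes as follows. The image tuple has weight $w-1$ plus structure; every entry of $\fh$ is at most $q$, so the position of $\fh^+$'s last entry is detectable as the first entry exceeding $q$ or equal to $q+1$, and the last entry exceeding $q-1$ marks the sticky addition. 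Reversing these steps recovers $(\fh,r,\ft)$. I expect to spend most effort verifying this inverse map, together with the edge cases $\fh=\emptyset$ and $r=0$. If exact injectivity fails, a triangularity argument comparing with the $K$-basis $\IKNw$ of Theorem \ref{thm_RwK} would serve as a fallback.
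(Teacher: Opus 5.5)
Your reduction is fine: $\#\BTTw=d_w^{(K)}$, so by Lemma~\ref{lem_min_tuple_imply_dim_lower_bound} it suffices to exhibit $2^{w-1}-d_w^{(K)}$ elements of $\Uw$ with pairwise distinct minimal tuples. Your leading-tuple computation is also correct: $s\bigl(\bU(\fh,\boxed{q+r},\ft)\bigr)=\bigl(\fh^{+},(r,\ft)^{+(q-1)}\bigr)$, with $\emptyset^{+}=(1)$, and equal to $(\fh^{+},q-1)$ when $(r,\ft)=\emptyset$.

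The gap is exactly the step you flag: on $\cI_w\setminus\BTTw$ this assignment is not injective, for every $q$. Take $w=q+4$, with $\fs=(1,q+2,1)$ (so $\fh=(1)$, $r=2$, $\ft=(1)$) and $\fs'=(2,1,q+1)$ (so $\fh=(2,1)$, $r=1$, $\ft=\emptyset$). Both lie in $\cI_w\setminus\BTTw$, and
\[
s\bigl(\bU(1,\boxed{q+2},1)\bigr)=(2,2,q)=s\bigl(\bU(2,1,\boxed{q+1})\bigr).
\]
Your decoding rule fails because the last entry of $\fh^{+}$ is $h_k+1$, which is typically $\le q$, while $r$ is an arbitrary positive integer. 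Nothing in $(\fh^{+},r,\ldots)$ marks where $\fh^{+}$ ends. So the family $\{\bU_{\fs}\}$ does not have distinct leading tuples, and the proof as proposed does not go through.

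The fallbacks do not close this. Triangularity against $\IKNw$ gives $K$-independence of the $\brho_{\fs}$, but not $\bbF_q$-independence of their $D_1$-parts. A relation $\sum c_{\fs}\bU_{\fs}=\bzero$ only forces $\sum c_{\fs}\brho_{\fs}$ to lie in $\mathscr{S}^{(\bbF_q)}$, which nothing excludes.

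The missing idea is to use \emph{differences} of two $\bU$'s, allowing heads outside $[q]^{\bullet}$. Then the sticky terms falling into the tail cancel, and the large entry survives in the leading tuple. Concretely, take $h\in\bbZ_+$, $\fm\in[q]^{\bullet}$, $a\ge 0$, $\ft\ne\emptyset$, and let $h^{-}=h-1$ (or $\emptyset$ if $h=1$). The element
\[
\bU\bigl(h^{-},\boxed{{}^{q+}(\fm,2+a)},\ft\bigr)-\bU\bigl(h,\fm,1+a,\boxed{{}^{q+}\ft}\bigr)
\]
has minimal-depth part $\bigl(h,(q-1)\widetilde{*}(\fm,2+a),\ft\bigr)$. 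Its leading tuple is therefore $(h,\fm,q+1+a,\ft)$, and the marked entry is recoverable as the first entry after position $1$ exceeding $q$.

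Single $\bU$'s then supply the remaining leading tuples $(h,\fm,q+1+a)$, $(h,\fm,y,q-1)$ with $2\le y\le q$, $(h,\fm,q)$, and $(h,q-1)$. This is how the paper proceeds. The index set is not $\cI_w\setminus\BTTw$ but a different set $\cU$ whose leading tuples are distinct by construction, and a generating-function count gives $\#(\cU\cap\cI_w)=2^{w-1}-d_w^{(K)}$.
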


\begin{proof}
Define $\cU = \bigsqcup_{i=1}^{4}\cU^{(i)} \subset \cI$, where
\begin{align*}
\cU^{(1)} &= \left\{ (h,\fm,q+1+a,\ft) ~\Big|~ h \in \bbZ_+, \fm \in [q]^{\bullet}, a \in \bbZ_{\ge 0}, \ft \in \cI  \right\}, \\
\cU^{(2)} &= \left\{ (h,\fm,y,q-1) ~\Big|~ h \in \bbZ_+, \fm \in [q]^{\bullet}, y \in [q]\setminus\{1\}  \right\}, \\
\cU^{(3)} &= \left\{ (h,\fm,q) ~\Big|~ h \in \bbZ_+, \fm \in [q]^{\bullet} \right\}, \\
\cU^{(4)} &= \left\{ (h,q-1) ~\Big|~ h \in \bbZ_+ \right\}.
\end{align*}
For $h \in \bbZ_{+}$, define
\[
h^{-} = \begin{cases}
h-1 &\text{if } h \ge 2, \\
\emptyset &\text{if } h = 1.
\end{cases}
\]

Recall that the notation $\boxed{{}^{q+}\fs}$ is defined in Definition \ref{def_q+}.
For any $\fs_1 = (h,\fm,q+1+a,\ft) \in \cU^{(1)}$ with $\ft \neq \emptyset$, one easily checks that $\fs_1$ is the minimal tuple appearing in
\[
\bU\left( h^{-}, \boxed{{}^{q+}(\fm,2+a)}, \ft \right) - \bU\left( h,\fm,1+a,\boxed{{}^{q+}\ft} \right).
\]
For any $\fs_1 = (h,\fm,q+1+a,\ft) \in \cU^{(1)}$ with $\ft = \emptyset$, it is the minimal tuple appearing in 
\[
\bU\left( h^{-},\boxed{{}^{q+}(\fm,2+a)} \right). 
\]
For any $\fs_2 = (h,\fm,y,q-1) \in \cU^{(2)}$, we have 
\[
s\left( \bU\left( h,\fm,y^{-},\boxed{q} \right) \right) = \fs_2.
\]
For any $\fs_3=(h,\fm,q) \in \cU^{(3)}$, we have
\[
s\left( \bU\left( h^{-},\boxed{{}^{q+}(\fm,1)} \right) \right) = \fs_3.
\]
For any $\fs_4=(h,q-1) \in \cU^{(4)}$, we have
\[
s\left( \bU\left( h^{-},\boxed{q} \right) \right) = \fs_4.
\]
Therefore, for any $\fs \in \cU$, we have constructed an element $\bU_{\fs} \in \mathscr{U}^{(\bbF_q)}$ such that $s(\bU_{\fs}) = \fs$.
By Lemma \ref{lem_min_tuple_imply_dim_lower_bound},
\begin{equation}\label{eqn51}
\dim_{\bbF_q} \Uw \geqslant \#\left( \cU \cap \cI_w \right).
\end{equation}

Clearly, $\cU^{(1)},\ldots,\cU^{(4)}$ are pairwise disjoint.
Moreover, for any fixed $i \in [4]$, the elements in $\cU^{(i)}$ are distinct for different eligible choices of $h$, $\fm$, $a$, $\ft$, $y$.
Denote by $U_w^{(i)} = \#\left(\cU^{(i)} \cap \cI_w \right)$ for each $i \in [4]$. 
We have the generating functions
\begin{align*}
\sum_{w=1}^{\infty} U_w^{(1)} x^w &= \frac{x}{1-x} \cdot \frac{1}{1-x-x^2-\cdots-x^q} \cdot \frac{x^{q+1}}{1-x} \cdot \frac{1-x}{1-2x} = \frac{x^{q+2}}{(1-2x)(1-2x+x^{q+1})}, \\
\sum_{w=1}^{\infty} U_w^{(2)} x^w &= \frac{x}{1-x} \cdot \frac{1}{1-x-x^2-\cdots-x^q} \cdot \frac{x^2-x^{q+1}}{1-x} \cdot x^{q-1} = \frac{x^{q+2}(1-x^{q-1})}{(1-x)(1-2x+x^{q+1})}, \\
\sum_{w=1}^{\infty} U_w^{(3)} x^w &= \frac{x}{1-x} \cdot \frac{1}{1-x-x^2-\cdots-x^q} \cdot x^{q} = \frac{x^{q+1}}{1-2x+x^{q+1}}, \\
\sum_{w=1}^{\infty} U_w^{(4)} x^w &= \frac{x}{1-x} \cdot x^{q-1} = \frac{x^q}{1-x}.
\end{align*} 
Summing up, we obtain
\[
\sum_{w=1}^{\infty} \#\left( \cU \cap \cI_w \right) x^w =  \frac{x^q(1-x)^2}{(1-2x)(1-2x+x^{q+1})} = \frac{x}{1-2x} - \frac{x(1-x^{q-1})}{1-2x+x^{q+1}}.
\]
Therefore, by the definition of $d_w^{(K)}$, we have $\#\left( \cU \cap \cI_w \right) = 2^{w-1} - d_w^{(K)}$.
Combining this with \eqref{eqn51}, we obtain the desired lower bound for $\dim_{\bbF_q} \Uw$. 
\end{proof}

\begin{remark}
We can prove that $\dim_{\bbF_q} \Uw = 2^{w-1} - d_w^{(K)}$. However, Lemma \ref{lem_dimUw} is sufficient for the proof of Proposition \ref{prop_surj} below.
This lower bound on $\dim_{\bbF_q}\Uw$ in Lemma \ref{lem_dimUw} supplies enough independent formal sums $\bU_i$ to represent an arbitrary relation as an element of $\gInd(\bR')$ in the following surjectivity argument.
\end{remark}

\begin{proposition}\label{prop_surj}
For any $\bR \in \Rw$, there exists $\bR^\prime \in \Rw$ such that $\bR \in \gInd(\bR^\prime)$.
\end{proposition}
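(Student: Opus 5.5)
The plan is to split every carry relation into its $\bbF_q$-level part and its $D_1$-level part, and to show that $\bR$ is an $\bbF_q$-combination of $\bbF_q$-level parts alone. For each carry relation set
\[
\bm{P}\left(\fh,\boxed{q+r},\ft\right) \coloneq (\fh,q+r,\ft)+\mathbbm{1}_{r\neq0}\cdot(\fh,q,r,\ft),
\]
so that $\brho(\fh,\boxed{q+r},\ft)=\bm{P}(\fh,\boxed{q+r},\ft)+D_1\,\bU(\fh,\boxed{q+r},\ft)$.

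The key claim is that every $\bR\in\Rw$ lies in $\Span_{\bbF_q}\{\bm{P}\}$. Granting it, write $\bR=\sum_i c_i\bm{P}_i$ with $c_i\in\bbF_q$, and put $\bR'\coloneq-\sum_i c_i\bU_i\in\Sw$. Then
\[
\bR-\sum_i c_i\brho_i=-D_1\sum_i c_i\bU_i=D_1\bR'.
\]
Taking $\LiA$ and using Lemma \ref{lem_gIKN} together with $\LiA(\bR)=0$ gives $D_1\LiA(\bR')=0$. Hence $\bR'\in\Rw$, and by definition $\bR'\in\gInd(\bR)$.

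To pass from this to the statement, I would first try the following. By Lemma \ref{lem_easy} and Corollary \ref{cor_coset}, $\bR\mapsto\gInd(\bR)$ induces a linear map $\Rw\to\Rw/\Qw$. Producing $\bR'$ with $\bR\in\gInd(\bR')$ then amounts to a symmetry of the relation ``$\bR'\in\gInd(\bR)$''. That symmetry is not automatic, so this is the step I expect to be the main obstacle. My fallback is to run the same construction on the $D_1$-level side, and this is where the lower bound $\dim_{\bbF_q}\Uw\ge 2^{w-1}-d_w^{(K)}$ of Lemma \ref{lem_dimUw} should enter. The target is to show that $\Uw+\Span_{\bbF_q}\BTTw=\Sw$, which follows from that dimension count. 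Then $\bR=\sum_i c_i\bU_i+\bT$ with $\bT$ supported on $\BTTw$. Setting $\bR'\coloneq-\sum_i c_i\bm{P}_i$ gives
\[
D_1\bR-\bR'-\sum_i c_i\brho_i=D_1\bT.
\]
The remaining task is to show that $\bT=\bzero$ and that $\bR'\in\Rw$; equivalently, that $D_1\LiA(\bR)$ forces the Todd--Thakur part $\bT$ to vanish. I expect to settle this with the same independence-of-degree argument as in the key claim.

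The key claim itself I would prove in two steps. First, the minimal tuple of $\bm{P}(\fh,\boxed{q+r},\ft)$ is $(\fh,q+r,\ft)$ and that of $\bm{P}(\fh',\boxed{q})$ is $(\fh',q)$. Every $\fs\in\cI_w\setminus\BTTw$ arises this way, since it has the form $\brho_\fs$ (even with $\fh\in[q]^\bullet$). Triangularity then gives $\Sw=\Span_{\bbF_q}\{\bm{P}\}\oplus\Span_{\bbF_q}\BTTw$. So I would write $\bR=\sum c_i\bm{P}_i+\bT$ with $\bT$ supported on $\BTTw$.

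Second, I would show that $\bT=\bzero$. Evaluating $\bR-\sum c_i\brho_i=\bT-D_1\sum c_i\bU_i$ under $\LiA$ gives $\LiA(\bT)=D_1\LiA\bigl(\sum c_i\bU_i\bigr)$. Applying Lemma \ref{lem_nonTTtoTT} repeatedly (in decreasing-to-increasing order as in the Im--Kim--Ngo Dac algorithm), I rewrite $\LiA\bigl(\sum c_i\bU_i\bigr)$ as an $\bbF_q[D_1]$-combination of $\LiA(\ft)$ with $\ft\in\BTTw$. The right-hand side is then an $\bbF_q[D_1]$-combination in which every coefficient is divisible by $D_1$, while $\LiA(\bT)$ has coefficients in $\bbF_q$. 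By the $\bbF_q$-linear independence of $\{D_1^k\LiA(\ft)\}$ from Remark \ref{rmk_1.6}, all coefficients of $\bT$ must vanish. This proves the key claim.
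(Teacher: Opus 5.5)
\textbf{Verdict: genuine gap.} The proposal does not prove the proposition. The first half proves only the converse direction. The fallback relies on a spanning claim that is false, and on a vanishing step that cannot be carried out.

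\textbf{The key claim proves the wrong direction.} Your key claim is correct: triangularity gives $\Sw=\Span_{\bbF_q}\{\boldsymbol{P}\}+\Span_{\bbF_q}\BTTw$, and comparing constant terms in $D_1$ kills the Todd--Thakur part. But it only yields $\bR'\in\gInd(\bR)$, i.e.\ $\gInd(\bR)\neq\emptyset$, which the paper already has from $\Ind(\bR)\in\gInd(\bR)$. Split $D_1\bR=\bR'+\sum_i c_i\brho_i$ into its $\bbF_q$-level and $D_1$-level parts. This shows the proposition is equivalent to $\Rw\subset\Uw$. So everything rests on your fallback.

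\textbf{First failure: the spanning claim is false.} The equality $\Uw+\Span_{\bbF_q}\BTTw=\Sw$ does not follow from the dimension count, because $\Uw\cap\Span_{\bbF_q}\BTTw\neq\{\bzero\}$. Already in weight $w=q$ the only carry relation is $\brho(\boxed{q})=(q)+D_1\cdot(1,q-1)$. Hence $\mathscr{U}_q^{(\bbF_q)}=\bbF_q\cdot(1,q-1)$, while $(1,q-1)\in\mathcal{B}_q^{(\textup{TT})}$. The dimensions $1$ and $2^{q-1}-1$ add up to $2^{q-1}$, yet the sum misses $(q)$.

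\textbf{Second failure: $\bT$ is not forced to vanish.} Even when a decomposition $\bR=\sum_i c_i\bU_i+\bT$ exists, your identity only gives $\LiA(\bR')=-D_1\LiA(\bT)$, with $\bR'$ at the $\bbF_q$-level. After rewriting $\LiA(\bR')$ via Lemma \ref{lem_nonTTtoTT}, its coefficients are arbitrary elements of $\bbF_q[D_1]$. They may well be divisible by $D_1$; for example $\LiA(q)=-D_1\LiA(1,q-1)$. The constant-term argument worked in the key claim only because there the Todd--Thakur piece sat on the $\bbF_q$-level side. Concretely, $\bzero=\bU(\boxed{q})-(1,q-1)$ decomposes the relation $\bzero$ with $\bT=-(1,q-1)\neq\bzero$ and $\bR'=-(q)\notin\mathscr{R}_q^{(\bbF_q)}$. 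Showing that \emph{some} decomposition with $\bT=\bzero$ exists is precisely $\Rw\subset\Uw$, i.e.\ the statement itself.

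\textbf{The missing idea.} You need a degeneration argument that uses the $K$-dimension $\dim_K\RwK=m\coloneq 2^{w-1}-d_w^{(K)}$ (from Theorems \ref{thm_CCMIKLNP} and \ref{thm_MZV=CMPLV}), not merely the independence of Todd--Thakur values. The paper's argument runs as follows.
\begin{enumerate}
\item Using Lemma \ref{lem_dimUw}, pick $\bbF_q$-independent $\bU_1,\ldots,\bU_m\in\Uw$ with lifts $\bF_i+D_1\bU_i\in\Span_{\bbF_q}\gIKNw\subset\RwK$.
\item The $m+1$ vectors $\bR,\bF_1+D_1\bU_1,\ldots,\bF_m+D_1\bU_m$ lie in $\RwK$, so they are $K$-dependent and every $(m+1)$-minor vanishes.
\item Each such minor is a polynomial in $D_1$ with $\bbF_q$-coefficients, and its coefficient of $D_1^m$ is the corresponding minor of $(\bR,\bU_1,\ldots,\bU_m)$. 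Hence $\bR=\sum_i c_i\bU_i$.
\item Then $\bR'=-\sum_i c_i\bF_i$ is automatically a relation.
\end{enumerate}
This is where the lower bound on $\dim_{\bbF_q}\Uw$ really enters: it is matched against $\dim_K\RwK$, not against $\#\BTTw$.
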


\begin{proof}
Consider the linear map $\LiA: \mathscr{S}_w^{(K)} \rightarrow K_{\infty}$. 
Its kernel is $\RwK$.
By Theorem \ref{thm_MZV=CMPLV}, its image is $\cZ_w^{(K)}$.
By the rank-nullity theorem and Theorem \ref{thm_CCMIKLNP}, we obtain $\dim_K \RwK = 2^{w-1}-d_w^{(K)}$.

Let $m=2^{w-1}-d_w^{(K)}$. 
By Lemma \ref{lem_dimUw}, there exist $\bU_1,\bU_2,\ldots,\bU_m \in \Uw$ such that they are $\bbF_q$-linearly independent.
By the definition of $\Uw$ (see Definitions \ref{def_LU} and \ref{def_Uw}), there exist $\bF_1,\bF_2,\ldots,\bF_m \in \Sw$ such that $\bF_i+D_1\bU_i \in \Span_{\bbF_q}\gIKNw \subset \RwK$.
Since $\dim_K \RwK = m$, the $m+1$ elements
\[
\bR,\ \bF_1+D_1\bU_1,\ \bF_2+D_1\bU_2,\ \ldots,\ \bF_m+D_1\bU_m
\]
in $\RwK$ are $K$-linearly dependent.
Therefore, for any choice of $m+1$ tuple coordinates, the $(m+1) \times (m+1)$ determinant
\[
\det \left( \bR,\ \bF_1+D_1\bU_1,\ \bF_2+D_1\bU_2,\ \ldots,\ \bF_m+D_1\bU_m \right) = 0 \in K.
\]
On the other hand, by the definition of determinant, we have
\begin{align*}
& \det \left( \bR,\ \bF_1+D_1\bU_1,\ \bF_2+D_1\bU_2,\ \ldots,\ \bF_m+D_1\bU_m \right) \\
=& \det \left( \bR,\ \bU_1,\ \bU_2,\ \ldots,\ \bU_m \right) \cdot D_1^{m} + \text{lower degree terms} \in \bbF_q[D_1].
\end{align*}
Thus, we deduce that
\[
\det \left( \bR,\ \bU_1,\ \bU_2,\ \ldots,\ \bU_m \right) = 0 \in \bbF_q.
\]
This holds for any $m+1$ tuple coordinates, so 
\[
\operatorname{rank} \left( \bR,\ \bU_1,\ \bU_2,\ \ldots,\ \bU_m \right) \le m.
\]
Since $\bU_1,\bU_2,\ldots,\bU_m$ are $\bbF_q$-linearly independent, we must have
\[
\bR \in \Span_{\bbF_q} \left\{ \bU_1,\ \bU_2,\ \ldots,\ \bU_m \right\}.
\]
Therefore, $\bR = \sum_{i=1}^{m} c_i \bU_i$ for some $c_i \in \bbF_q$.
Take $\bR^\prime = -\sum_{i=1}^{m} c_i\bF_i$. Then,
\begin{equation}\label{eqn_5.1}
D_1 \bR = \bR^\prime + \sum_{i=1}^{m} c_i\left( \bF_i+D_1\bU_i \right).
\end{equation}
Taking the $\LiA$-evaluation of \eqref{eqn_5.1}, we obtain $\LiA(\bR^\prime)=0$ and hence $\bR^\prime \in \Rw$.
Since each $\bF_i+D_1\bU_i$ is an $\bbF_q$-linear combination of carry relations, Equation \eqref{eqn_5.1} implies that $\bR \in \gInd(\bR^\prime)$ by definition.
\end{proof}

Having proved the required surjectivity, we can now combine it with the monotonicity of $\Ind$ along the (depth, lex)-order to conclude the main relation theorem.

\section{The operator $\Indbar$ and the proof of $\Rw =\Qw$}\label{sec_Rw=Qw}

We restate Theorem \ref{thm_R} as follows.
\begin{theorem}\label{thm_R2}
For any weight $w \ge 1$, we have
\[
\Rw = \Qw.
\]
In other words, any $\bbF_q$-linear relation among CMPLVs is an $\bbF_q$-linear combination of quadruple-carry relations.
\end{theorem}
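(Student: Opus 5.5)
The plan is to pass to the quotient $\Rw/\Qw$ and study the map induced by $\gInd$ there. By Corollary \ref{cor_coset}, for every $\bR \in \Rw$ the set $\gInd(\bR)=\Ind(\bR)+\Qw$ is a single coset. This yields a well-defined map
\[
\Indbar \colon \Rw/\Qw \longrightarrow \Rw/\Qw, \qquad \bR+\Qw \longmapsto \Ind(\bR)+\Qw .
\]
I will first check that it depends only on the coset and is $\bbF_q$-linear.

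\emph{Well-definedness.} If $\bR_1-\bR_2=\bQ\in\Qw$, then gIKN combinations add. Hence $\gInd(\bR_1)\supseteq \gInd(\bR_2)+\gInd(\bQ)$, and by Lemma \ref{lem_gIndQ} we have $\gInd(\bQ)=\Qw$. Comparing cosets gives $\Ind(\bR_1)+\Qw=\Ind(\bR_2)+\Qw$.

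\emph{Linearity.} If $\bR_1'\in\gInd(\bR_1)$ and $\bR_2'\in\gInd(\bR_2)$, then $a\bR_1'+b\bR_2'\in\gInd(a\bR_1+b\bR_2)$, since sums of $\bbF_q$-combinations of carry relations are again such combinations. So $\Indbar$ is linear on the finite-dimensional quotient.

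\emph{Surjectivity.} By Proposition \ref{prop_surj}, every $\bR\in\Rw$ lies in $\gInd(\bR')$ for some $\bR'\in\Rw$. Hence $\bR+\Qw=\Indbar(\bR'+\Qw)$, and $\Indbar$ is surjective. On a finite-dimensional space this means $\Indbar$ is bijective.

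\emph{Nilpotency.} Next I show that $\Indbar$ is nilpotent, using Lemma \ref{lem_Ind_nilp}. For $\bR\neq\bzero$, either $\Ind(\bR)=\bzero$ or $s(\Ind(\bR))\succ s(\bR)$. Iterating the genuine operator $\Ind$ on $\Rw$, the minimal tuples strictly increase in the finite set $\cI_w$. Therefore $\Ind^{N}(\bR)=\bzero$ for $N=\#\cI_w$.

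Since $\Indbar^{\,k}(\bR+\Qw)=\Ind^{k}(\bR)+\Qw$ (immediate from the definition), we get $\Indbar^{N}=0$. A nilpotent bijection of a finite-dimensional space forces the space to be zero. Thus $\Rw/\Qw=0$, that is, $\Rw=\Qw$.

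The main obstacle is the compatibility step, namely that $\Indbar$ is well defined and linear. This needs care because $\Ind$ itself is defined by an algorithm that depends on $s(\bR)$ and need not be linear on $\Rw$. The point is that everything reduces to the coset description $\gInd(\bR)=\Ind(\bR)+\Qw$. This description is where Proposition \ref{prop_gInd0} and Lemma \ref{lem_gIndQ} do the real work, and once it is in hand the rest is linear algebra.
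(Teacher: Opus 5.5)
Your proposal is correct and follows essentially the same route as the paper: define $\Indbar$ on $\Rw/\Qw$ using Corollary \ref{cor_coset} and Lemma \ref{lem_gIndQ}, get nilpotency from Lemma \ref{lem_Ind_nilp} and surjectivity from Proposition \ref{prop_surj}, and conclude $\Rw/\Qw=0$. Your extra linearity check is valid but not needed, since $\Indbar^{N}$ is both surjective and identically zero, which already forces the quotient to vanish.
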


\begin{proof}
By Corollary \ref{cor_coset} and Lemma \ref{lem_gIndQ}, there exists a well-defined operator $\Indbar$ on the quotient space $\Rw/\Qw$:
\begin{align*}
\Indbar \colon\ \Rw/\Qw &\longrightarrow \Rw/\Qw \\
\overline{\bR} = \bR + \Qw &\mapsto \gInd(\bR) = \Ind(\bR) + \Qw. 
\end{align*} 
Indeed, by Corollary \ref{cor_coset}, $\gInd(\bR) = \Ind(\bR) + \Qw$ is a coset of $\Qw$ in $\Rw$, and hence can be viewed as an element in the quotient space $\Rw/\Qw$.
If $\bR_1 - \bR_2  \in \Qw$, then there exist finitely many $\brho_{i} \in \gIKNw$ with $c_i \in \bbF_q$ such that $\bR_1 - \bR_2 =\sum_i c_i\brho_i$ by Lemma \ref{lem_gIndQ}, so $\gInd(\bR_1) = \gInd(\bR_2)$ by the definition of $\gInd$.
Therefore, $\Indbar(\overline{\bR})$ is independent of the choice of the representative $\bR$ of $\overline{\bR}$.

By Lemma \ref{lem_Ind_nilp} and the fact $\# \cI_w = 2^{w-1}$ for any $w \ge 1$, we have $\Ind^{2^{w-1}}(\bR) = \bzero$ for any $\bR \in \Rw$. 
Therefore, $\Indbar^{2^{w-1}}(\overline{\bR}) = \overline{\bzero}$ for any $\overline{\bR} \in \Rw/\Qw$.
By Proposition \ref{prop_surj}, $\gInd$ is surjective and hence $\Indbar$ is surjective.
Therefore, we have $\Rw/\Qw = \{ \overline{\bzero} \}$; that is, $\Rw = \Qw$.
\end{proof}

\section{A lower bound for $\dim_{\bbF_q} \Qw$}\label{sec_7}

Recall that the sequence $\{d_w\}_{w \geqslant 1}$ is defined by the generating function 
\[
\sum_{w=1}^{\infty} d_w x^w = \frac{x(1-x^q)(1-2x+x^q)}{(1-2x+x^{q+1})^2},
\]
as stated in Theorem \ref{thm_Z}.
The aim of this section is to prove that $\dim_{\bbF_q} \Qw \ge 2^{w-1} - d_w$.
We use the same strategy as in the proof of Lemma \ref{lem_dimUw}.

\begin{definition}\label{def_M}
Define $\cM = \bigsqcup_{i=1}^{8} \cM^{(i)} \subset \cI$, where 
\begin{align*}
\cM^{(1)} &= \Big\{ (\fh,q+1+a,\fm,2+b,\{1\}^c,q+1+d,\ft,z) ~\Big|~ \fm \in \cI, \fh, \ft \in [q]^{\bullet},  \\
	&\qquad\qquad\qquad\qquad\qquad\qquad\qquad\qquad\qquad\qquad\qquad  a,b,c,d \in \bbZ_{\geqslant 0}, z \in [q-2] \Big\}, \\
	\cM^{(2)} &= \left\{ (\fh,q+1+a,\fm,2+b,\{1\}^c,q+1+d,\ft,1,q-1) ~\Big|~ \fm \in \cI, \fh, \ft \in [q]^{\bullet}, a,b,c,d \in \bbZ_{\geqslant 0} \right\}, \\
	\cM^{(3)} &= \left\{ (\fh,q+1+a,\fm,2+b,\{1\}^c,q+d) ~\Big|~ \fm \in \cI, \fh \in [q]^{\bullet}, a,b,c,d \in \bbZ_{\geqslant 0} \right\}, \\
	\cM^{(4)} &= \left\{ (\fh,q+1+a,\fm,2+b,q-1) ~\Big|~ \fm \in \cI, \fh \in [q]^{\bullet}, a,b \in \bbZ_{\geqslant 0} \right\}, \\
	\cM^{(5)} &= \left\{ (\fh,q+2+a,\{1\}^c,q+1+d,\ft,z) ~\Big|~ \fh,\ft \in [q]^{\bullet}, a,c,d \in \bbZ_{\geqslant 0}, z \in [q-2] \right\}, \\
	\cM^{(6)} &= \left\{ (\fh,q+2+a,\{1\}^c,q+1+d,\ft,1,q-1) ~\Big|~ \fh,\ft \in [q]^{\bullet}, a,c,d \in \bbZ_{\geqslant 0} \right\}, \\
	\cM^{(7)} &= \left\{ (\fh,q+2+a,\{1\}^c,q+d) ~\Big|~ \fh \in [q]^{\bullet}, a,c,d \in \bbZ_{\geqslant 0} \right\}, \\
	\cM^{(8)} &= \left\{ (\fh,q+2+a,q-1) ~\Big|~ \fh \in [q]^{\bullet}, a \in \bbZ_{\geqslant 0} \right\}. 
\end{align*}
Note that if $q=2$, then $[q-2]=\emptyset$ and hence $\cM^{(1)}=\cM^{(5)}=\emptyset$.
For any weight $w$, define $\cM_w = \cM \cap \cI_w$.
\end{definition}

\begin{lemma}\label{lem_cardM}
For any weight $w \geqslant 1$, we have $\# \cM_w = 2^{w-1} - d_w$.
\end{lemma}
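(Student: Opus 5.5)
The plan mirrors the counting in the proof of Lemma \ref{lem_dimUw}. I will compute the generating function $\sum_{w\ge1}\#\cM_w\,x^w$ as a sum of eight rational functions, one for each $\cM^{(i)}$. Then I will check that this sum equals
\[
\frac{x}{1-2x}-\frac{x(1-x^q)(1-2x+x^q)}{(1-2x+x^{q+1})^2}.
\]
Since $\#\cI_w=2^{w-1}$, the claim follows.

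\textbf{Step 1: disjointness and unique parametrisation.} For each $i$, I will show that the parameters $(\fh,a,\fm,b,c,d,\ft,z)$ are recovered uniquely from the tuple, and that the eight sets are pairwise disjoint.
\begin{itemize}
\item The entry $q+1+a$ (or $q+2+a$) is the first entry exceeding $q$, because $\fh\in[q]^{\bullet}$. This recovers $\fh$ and $a$.
\item Reading from the right: the tail $(\ft,z)$ with $z\in[q-2]$, the tail $(\ft,1,q-1)$, the last entry $q+d\ge q$, or the last entry $q-1$ distinguishes the four types. The ambiguity between $z$ and $(1,q-1)$ is resolved because $z\le q-2$.
\item The entry $q+1+d$ is the last entry exceeding $q$, because $\ft\in[q]^{\bullet}$. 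This recovers $d$ and $\ft$.
\item The block $2+b,\{1\}^c$ is the last entry $\ge2$ before the final run of $1$'s preceding $q+1+d$. This recovers $b$, $c$ and $\fm$.
\item Sets $1$--$4$ are separated from sets $5$--$8$ by whether a $\fm$-and-$(2+b)$ block is present. This has to be stated carefully, since $\cM^{(5)}$ uses $q+2+a$ immediately followed by $1$'s. Concretely, in $\cM^{(1)}$ the entry immediately before the $1$-run preceding $q+1+d$ is $\ge2$ and lies strictly after the first big entry, whereas in $\cM^{(5)}$ that entry is the first big entry itself.
\end{itemize}

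\textbf{Step 2: the factor generating functions.} Let $P=1-2x+x^{q+1}$, so that $[q]^{\bullet}\leftrightarrow 1/(1-x-\cdots-x^q)=(1-x)/P$. The other factors are:
\begin{itemize}
\item $\cI\leftrightarrow (1-x)/(1-2x)$;
\item $q+1+a\leftrightarrow x^{q+1}/(1-x)$;
\item $2+b\leftrightarrow x^2/(1-x)$;
\item $\{1\}^c\leftrightarrow 1/(1-x)$;
\item $z\in[q-2]\leftrightarrow (x-x^{q-1})/(1-x)$;
\item $(1,q-1)\leftrightarrow x^q$.
\end{itemize}
For example,
\[
\sum U^{(1)}=\frac{(1-x)^2}{P^2}\cdot\frac{x^{q+1}}{1-x}\cdot\frac{1-x}{1-2x}\cdot\frac{x^2}{(1-x)^2}\cdot\frac{x^{q+1}}{1-x}\cdot\frac{x-x^{q-1}}{1-x}.
\]
Pairing terms simplifies the sum:
\begin{itemize}
\item $\cM^{(1)}+\cM^{(2)}$ combine via $z$-part $+\,x^q=(x-x^{q-1}+x^q-x^{q+1})/(1-x)$.
\item $\cM^{(3)}$ contributes $x^q/(1-x)$ for the last entry.
\item $\cM^{(4)}$ contributes $x^{q-1}$.
\end{itemize}
Sets $5$--$8$ are analogous, with $\fm,2+b$ removed and $q+1+a$ replaced by $q+2+a$.

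\textbf{Step 3: the final identity (main obstacle).} The last step is the rational-function identity between the sum of these eight terms and the target. I expect this algebra to be the main obstacle, mainly for bookkeeping. To verify it, I will:
\begin{itemize}
\item clear denominators by multiplying by $(1-2x)P^2$;
\item compare polynomials, using $(1-x)(1+x+\cdots+x^{q-1})=1-x^q$ and $1-2x+x^q=(1-x)(1-x-\cdots-x^{q-1})$ to simplify;
\item as a sanity check, confirm small cases, e.g.\ $q=2$ where $\cM^{(1)}=\cM^{(5)}=\emptyset$, by expanding a few coefficients (the first nonzero count occurs at $w=2q+1$, given by $(q+2,q-1)\in\cM^{(8)}$).
\end{itemize}
Also, the $\cM$-sum should equal $\frac{x}{1-2x}-\frac{x(1-x^{q-1})}{P}$ plus a correction. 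This matches the count $\#\cU$ from Lemma \ref{lem_dimUw} and can serve as a structural check on the algebra.
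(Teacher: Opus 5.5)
Your proposal is correct and follows the paper's proof. Both arguments establish disjointness and unique parametrisation of the eight families, form a product generating function for each $\cM^{(i)}$ (your factors agree with the paper's), and finish with a rational-function identity. That identity does hold and is routine rather than an obstacle: the eight contributions sum to $\frac{x^{2q+1}(1-x)^2}{(1-2x)(1-2x+x^{q+1})^2}=\frac{x}{1-2x}-\frac{x(1-x^q)(1-2x+x^q)}{(1-2x+x^{q+1})^2}$. Also, the $U^{(1)}$ in your example should read $M^{(1)}$.
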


\begin{proof}
It is straightforward to check that $\cM^{(i)} \cap \cM^{(j)} = \emptyset$ for $i \neq j$, so $\cM = \bigsqcup_{i=1}^{8} \cM^{(i)}$ is indeed a disjoint union.
Moreover, for any fixed index $i \in [8]$, the tuples in $\cM^{(i)}$ are distinct for different eligible choices of $\fh,\fm,\ft$, $a,b,c,d$, and $z$.

For any $i \in [8]$ and $w \in \bbZ_+$, denote by $\cM^{(i)}_w = \cM^{(i)} \bigcap \cI_w$ and
\[
M_w^{(i)} = \#\cM^{(i)}_w.
\]
Clearly, we have the following generating functions:
\begin{align*}
\sum_{w=1}^{\infty} M^{(1)}_w x^w =&\ \frac{1}{1-x-x^2-\cdots-x^q} \cdot \frac{x^{q+1}}{1-x} \cdot \frac{1-x}{1-2x} \cdot \frac{x^2}{1-x} \cdot \frac{1}{1-x} \cdot \frac{x^{q+1}}{1-x} \\
&\quad\times \frac{1}{1-x-x^2-\cdots-x^q}\cdot\frac{x-x^{q-1}}{1-x} = \frac{x^{2q+5}(1-x^{q-2})}{(1-x)^2(1-2x)(1-2x+x^{q+1})^2},\\
\sum_{w=1}^{\infty} M^{(2)}_w x^w =&\ \frac{1}{1-x-x^2-\cdots-x^q} \cdot \frac{x^{q+1}}{1-x} \cdot \frac{1-x}{1-2x} \cdot \frac{x^2}{1-x} \cdot \frac{1}{1-x} \cdot \frac{x^{q+1}}{1-x} \\
&\quad\times \frac{1}{1-x-x^2-\cdots-x^q}\cdot x^q = \frac{x^{3q+4}}{(1-x)(1-2x)(1-2x+x^{q+1})^2}, \\
\sum_{w=1}^{\infty} M^{(3)}_w x^w =&\ \frac{1}{1-x-x^2-\cdots-x^q} \cdot \frac{x^{q+1}}{1-x} \cdot \frac{1-x}{1-2x} \cdot \frac{x^2}{1-x} \cdot \frac{1}{1-x} \cdot \frac{x^q}{1-x} \\
=&\ \frac{x^{2q+3}}{(1-x)^2(1-2x)(1-2x+x^{q+1})}, \\
\sum_{w=1}^{\infty} M^{(4)}_w x^w =&\ \frac{1}{1-x-x^2-\cdots-x^q} \cdot \frac{x^{q+1}}{1-x} \cdot \frac{1-x}{1-2x} \cdot \frac{x^2}{1-x} \cdot x^{q-1} = \frac{x^{2q+2}}{(1-2x)(1-2x+x^{q+1})}, \\
\sum_{w=1}^{\infty} M^{(5)}_w x^w =&\ \frac{1}{1-x-x^2-\cdots-x^q} \cdot \frac{x^{q+2}}{1-x} \cdot \frac{1}{1-x} \cdot \frac{x^{q+1}}{1-x} \cdot \frac{1}{1-x-x^2-\cdots-x^q} \cdot \frac{x-x^{q-1}}{1-x} \\
=&\ \frac{x^{2q+4}(1-x^{q-2})}{(1-x)^2(1-2x+x^{q+1})^2}, \\
\sum_{w=1}^{\infty} M^{(6)}_w x^w =&\ \frac{1}{1-x-x^2-\cdots-x^q} \cdot \frac{x^{q+2}}{1-x} \cdot \frac{1}{1-x} \cdot \frac{x^{q+1}}{1-x} \cdot \frac{1}{1-x-x^2-\cdots-x^q} \cdot x^q \\
=&\ \frac{x^{3q+3}}{(1-x)(1-2x+x^{q+1})^2}, \\
\sum_{w=1}^{\infty} M^{(7)}_w x^w =&\ \frac{1}{1-x-x^2-\cdots-x^q} \cdot \frac{x^{q+2}}{1-x} \cdot \frac{1}{1-x} \cdot \frac{x^q}{1-x} = \frac{x^{2q+2}}{(1-x)^2(1-2x+x^{q+1})}, \\
\sum_{w=1}^{\infty} M^{(8)}_w x^w =&\ \frac{1}{1-x-x^2-\cdots-x^q} \cdot \frac{x^{q+2}}{1-x} \cdot x^{q-1} = \frac{x^{2q+1}}{1-2x+x^{q+1}}.
\end{align*}
Summing up, we obtain
\[
\sum_{w=1}^{\infty} \left(\sum_{i=1}^{8} M^{(i)}_w\right) x^w = \frac{x^{2q+1}(1-x)^2}{(1-2x)(1-2x+x^{q+1})^2}= \frac{x}{1-2x} - \frac{x(1-x^q)(1-2x+x^q)}{(1-2x+x^{q+1})^2}.
\]
Thus, by the definition of the sequence $\{d_w\}_{w \geqslant 1}$, we have $\#\cM_w = \sum_{i=1}^{8} M^{(i)}_w = 2^{w-1}-d_w$.
\end{proof}

\begin{proposition}\label{prop_half}
For any $\fs \in \cM_w$, there exists an element $\bQ_\fs \in \Qw$ such that $s(\bQ_\fs) = \fs$.
In particular, for any weight $w \ge 1$, we have 
\[
\dim_{\mathbb{F}_q} \mathscr{Q}_w^{(\mathbb{F}_q)} \geqslant 2^{w-1} - d_w.
\]
\end{proposition}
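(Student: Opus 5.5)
The plan mirrors the proof of Lemma \ref{lem_dimUw}. The ``in particular'' part follows at once: once we have, for every $\fs\in\cM_w$, some $\bQ_\fs\in\Qw$ with $s(\bQ_\fs)=\fs$, Lemma \ref{lem_min_tuple_imply_dim_lower_bound} shows that the $\bQ_\fs$ are $\bbF_q$-linearly independent. Lemma \ref{lem_cardM} then gives $\dim_{\bbF_q}\Qw\ge\#\cM_w=2^{w-1}-d_w$. So the real work is to build $\bQ_\fs$ for each of the eight families $\cM^{(i)}$.

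\textbf{Reading off the leading term.} The basic tool is Remark \ref{rmk_HL}. The tuple $\left(\fh,(q+r_1,\fm)^{+},(q-1)\widetilde{*}(r_2,\ft)\right)$ is the sum of the tuples obtained by adding $q-1$ to one entry of $(r_2,\ft)$. Its $\prec$-minimal member comes from adding $q-1$ to the last entry of $(r_2,\ft)$, or to $r_2$ itself when $\ft=\emptyset$. All remaining terms of $\bQ$ begin with $(\fh^{+},1,\ldots)$ or $(\fh,1,\ldots)$ and have depth at least $\dep(\fh)+\dep(\fm)+\dep(\ft)+2$.

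\textbf{Matching each family.}
\begin{itemize}
\item For $\fs=(\fh,q+1+a,\fm,2+b,\{1\}^c,q+1+d,\ft,z)\in\cM^{(1)}$, I would read $q+1+a$ as $(q+r_1)^{+}$ with $r_1=a+1$. The intermediate block $(\fm,2+b,\{1\}^c,q+1+d,\ft)$ would be taken as the middle part. The final entry $z\le q-2$ must come from the $(q-1)$-sticky term applied to $(r_2,\ft')$.
\item The difficulty is that $z<q-1$ cannot arise as $q-1+(\text{positive})$. So the construction has to place the second box so that the minimal tuple is produced with a shifted structure. Concretely, I would use the ``$1+$'' at position $(r_1,\fm)^{+}$ and the $q-1$ addition to generate the entry $q+1+d$, not $z$. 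This means $\fs=\left(\fh,q+1+a,\fm',\,(q-1)+(2+d),\ft,z\right)$, with the box parameters chosen so that $r_2=2+d$ and the tail is $(\ft,z)$.
\item The same template should handle the other families:
  \begin{itemize}
  \item $\cM^{(2)}$ and $\cM^{(6)}$, with tail $(1,q-1)$;
  \item $\cM^{(3)}$ and $\cM^{(7)}$, with last entry $q+d$, using $r_2=d+1$ with empty $\ft$, or the boxed-$q$ variant when $d=0$;
  \item $\cM^{(4)}$ and $\cM^{(8)}$, with last entry $q-1$, using $\bQ(\ldots,\boxed{q})$, where $(q-1)*\emptyset$ contributes the final $q-1$;
  \item $\cM^{(5)}$ to $\cM^{(8)}$ are the cases $\fm=\emptyset$, where $(q+1+a,\emptyset)^{+}$ becomes $q+2+a$.
  \end{itemize}
\end{itemize}

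\textbf{The main obstacle.} In general a single $\bQ$ will not have exactly $\fs$ as its minimal tuple. The minimal tuple is determined by the sticky term on the last entry, and unwanted smaller terms can come from other placements or from the $\fh$-side constraints $\fh,\ft\in[q]^\bullet$. As in the $\cU^{(1)}$ case of Lemma \ref{lem_dimUw}, I expect to need differences of two or three quadruple-carry relations, for instance $\bQ(\fh,\boxed{q+r_1},\fm,\boxed{q+r_2},\ft)-\bQ(\fh,\ldots,\boxed{{}^{q+}\ft})$, so that the unwanted lower terms cancel and $\fs$ survives as the minimum. The ingredients $\{1\}^c$ and $2+b$ in $\cM$ suggest that the cancellation happens exactly when the entries between the two boxes are compared lexicographically.

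My first attempt would be the single $\bQ$ for each family, checking the minimum by the depth-then-lex rule. When it fails, I would correct it by subtracting the $\bQ$ whose leading term is the offending smaller tuple. This terminates because $\cI_w$ is finite and the offending tuples increase strictly. The delicate verification, which I expect to be the bulk of the proof, is that this correction never reintroduces a tuple $\prec\fs$ and never cancels $\fs$ itself.
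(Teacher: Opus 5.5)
\paragraph{A genuine gap: no $\bQ_\fs$ is actually constructed.}
Your reduction of the dimension bound to the existence of the $\bQ_\fs$ (via Lemmas \ref{lem_min_tuple_imply_dim_lower_bound} and \ref{lem_cardM}) is right. So is your reading of Remark \ref{rmk_HL}: within the leading sticky block, the $\prec$-minimal term adds $q-1$ to the \emph{last} entry of $(r_2,\ft)$. But the proposal never exhibits any $\bQ_\fs$, which is the entire content of the proposition, and the parameter matchings you do write down are off:
\begin{itemize}
\item In $\left(\fh,(q+r_1,\fm)^{+},\ldots\right)$ the ${}^{+}$ hits the last entry of the middle block, not $q+r_1$. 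For $\cM^{(1)}$--$\cM^{(4)}$ one takes $r_1=1+a$ and a middle block ending in $1+b$; this is where the entry $2+b$ comes from. Only for $\cM^{(5)}$--$\cM^{(8)}$, where the middle block is empty, does $q+1+a$ become $q+2+a$.
\item A single $\bQ$ with $r_2=2+d$ and tail $(\ft,z)$ has its minimal term with $q-1$ added to $z$, not to $2+d$.
\item For $\cM^{(3)}$, ``$r_2=d+1$ with empty tail'' works only when $c=0$. For $c\ge 1$ the box must be ${}^{q+}(\{1\}^{c},1+d)$, i.e.\ $r_2=1$.
\item The boxed-$q$ variant ends in $q-1$, so it is the construction for $\cM^{(4)}$, not for $\cM^{(3)}$ with $d=0$.
\end{itemize}

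\paragraph{The missing idea: exact cancellation between two relations.}
Take two relations sharing $\fh$ and the first box. For $\fs\in\cM^{(1)}$ take
\begin{align*}
&\bQ\left(\fh,\boxed{q+1+a},\fm,1+b,\boxed{{}^{q+}(\{1\}^{c},2+d)},\ft,z\right)\\
-&\bQ\left(\fh,\boxed{q+1+a},\fm,2+b,\{1\}^{c},1+d,\boxed{{}^{q+}(\ft,z)}\right).
\end{align*}
Both leading sticky blocks begin with $(\fh,q+1+a,\fm,2+b)$ and have the depth of $\fs$. The members in which $q-1$ is added inside $(\ft,z)$ cancel exactly. What remains is $\left(\fh,q+1+a,\fm,2+b,(q-1)\widetilde{*}(\{1\}^{c},2+d),\ft,z\right)$, whose minimum is $\fs$. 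Every other term is $\succ(\fh^{+},\ldots)$ with at least the same depth, hence $\succ\fs$.

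The same kind of difference handles the remaining two-relation families:
\begin{itemize}
\item $\cM^{(2)}$: use tail $(\ft,1,q-1)$ in the first relation, and second box ${}^{q+}(\ft,1)$ followed by $q-1$ in the second.
\item $\cM^{(5)}$ and $\cM^{(6)}$: the first relation has an empty middle block, and the second relation has first box $q+2+a$.
\end{itemize}
The families $\cM^{(3)},\cM^{(4)},\cM^{(7)},\cM^{(8)}$ need only single relations: $\bQ(\fh,\boxed{q+1+a},\fm,1+b,\boxed{{}^{q+}(\{1\}^{c},1+d)})$, $\bQ(\fh,\boxed{q+1+a},\fm,1+b,\boxed{q})$, and their versions with $\fm,1+b$ deleted.

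\paragraph{Why the fallback does not close the gap.}
Your fallback is to subtract the $\bQ$ whose leading term is the offending tuple. This presupposes that every offending tuple is already the leading tuple of some element of $\Qw$, which is essentially the statement being proved. It also gives no control over whether the coefficient of $\fs$ survives, which you yourself leave as the unverified bulk of the proof.
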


\begin{proof}
Recall that the notation $\boxed{{}^{q+}\fs}$ is defined in Definition \ref{def_q+}.

For any $\fs_1 = (\fh,q+1+a,\fm,2+b,\{1\}^c,q+1+d,\ft,z) \in \cM^{(1)}$, by Lemma \ref{lem_HL} and Remark \ref{rmk_HL}, we have
\begin{align*}
& \bQ\left( \fh,\boxed{q+1+a},\fm, 1+b, \boxed{{}^{q+}(\{1\}^{c}, 2+d)}, \ft,z \right) \\
=&\ \left( \fh,q+1+a,\fm, 2+b, (q-1)\widetilde{*}(\{1\}^{c}, 2+d, \ft,z) \right) + \text{``higher tuples''}, \\
& \bQ\left( \fh,\boxed{q+1+a},\fm,2+b,\{1\}^c, 1+d, \boxed{{}^{q+}(\ft, z)} \right) \\
=&\ \left( \fh,q+1+a,\fm,2+b,\{1\}^c, 2+d, (q-1)\widetilde{*}(\ft, z)\right)  + \text{``higher tuples''}.
\end{align*}
Taking the difference of the two expressions, we obtain
\begin{align*}
& \bQ\left( \fh,\boxed{q+1+a},\fm, 1+b, \boxed{{}^{q+}(\{1\}^{c}, 2+d)}, \ft,z \right) \\
-& \bQ\left( \fh,\boxed{q+1+a},\fm,2+b,\{1\}^c, 1+d, \boxed{{}^{q+}(\ft, z)} \right) \\
=&\ \left( \fh,q+1+a,\fm, 2+b, (q-1)\widetilde{*}(\{1\}^{c}, 2+d), \ft,z \right) + \text{``higher tuples''}.
\end{align*}
Therefore, $\fs_1 = (\fh,q+1+a,\fm,2+b,\{1\}^c,q+1+d,\ft,z) \in \cM^{(1)}$ is the minimal tuple appearing in the relation
\begin{align*}
& \bQ\left( \fh,\boxed{q+1+a},\fm, 1+b, \boxed{{}^{q+}(\{1\}^{c}, 2+d)}, \ft,z \right) \\
- & \bQ\left( \fh,\boxed{q+1+a},\fm,2+b,\{1\}^c, 1+d, \boxed{{}^{q+}(\ft, z)} \right).
\end{align*}

All other cases are similar. 
Indeed, any $\fs_2 = (\fh,q+1+a,\fm,2+b,\{1\}^c,q+1+d,\ft,1,q-1) \in \cM^{(2)}$ is the minimal tuple appearing in the relation
\begin{align*}
& \bQ\left( \fh,\boxed{q+1+a},\fm, 1+b, \boxed{{}^{q+}(\{1\}^{c}, 2+d)}, \ft,1,q-1 \right) \\
- & \bQ\left( \fh,\boxed{q+1+a},\fm,2+b,\{1\}^c, 1+d, \boxed{{}^{q+}(\ft, 1)},q-1 \right).
\end{align*}
Any $\fs_3 = (\fh,q+1+a,\fm,2+b,\{1\}^c,q+d) \in \cM^{(3)}$ is the minimal tuple appearing in the relation
\begin{align*}
& \bQ\left( \fh,\boxed{q+1+a},\fm, 1+b, \boxed{{}^{q+}(\{1\}^{c}, 1+d)} \right).
\end{align*}
Any $\fs_4 = (\fh,q+1+a,\fm,2+b,q-1) \in \cM^{(4)}$ is the minimal tuple appearing in the relation
\begin{align*}
& \bQ\left( \fh,\boxed{q+1+a},\fm, 1+b, \boxed{q} \right).
\end{align*}
Any $\fs_5 = (\fh,q+2+a,\{1\}^c,q+1+d,\ft,z) \in \cM^{(5)}$ is the minimal tuple appearing in the relation
\begin{align*}
& \bQ\left( \fh,\boxed{q+1+a}, \boxed{{}^{q+}(\{1\}^{c}, 2+d)}, \ft,z \right) \\
- & \bQ\left( \fh,\boxed{q+2+a},\{1\}^c, 1+d, \boxed{{}^{q+}(\ft, z)} \right).
\end{align*}
Any $\fs_6 = (\fh,q+2+a,\{1\}^c,q+1+d,\ft,1,q-1) \in \cM^{(6)}$ is the minimal tuple appearing in the relation
\begin{align*}
& \bQ\left( \fh,\boxed{q+1+a}, \boxed{{}^{q+}(\{1\}^{c}, 2+d)}, \ft,1,q-1 \right) \\
- & \bQ\left( \fh,\boxed{q+2+a},\{1\}^c, 1+d, \boxed{{}^{q+}(\ft, 1)},q-1 \right).
\end{align*}
Any $\fs_7 = (\fh,q+2+a,\{1\}^c,q+d) \in \cM^{(7)}$ is the minimal tuple appearing in the relation
\begin{align*}
& \bQ\left( \fh,\boxed{q+1+a}, \boxed{{}^{q+}(\{1\}^{c}, 1+d)} \right).
\end{align*}
Finally, any $\fs_8 = (\fh,q+2+a,q-1) \in \cM^{(8)}$ is the minimal tuple appearing in the relation
\begin{align*}
& \bQ\left( \fh,\boxed{q+1+a}, \boxed{q} \right).
\end{align*}

We have explicitly constructed a $\bQ_\fs \in \Qw$ such that $s(\bQ_\fs)=\fs$ for each $\fs \in \cM_w$.
By Lemmas \ref{lem_min_tuple_imply_dim_lower_bound} and \ref{lem_cardM}, 
\[
\dim_{\mathbb{F}_q} \Qw \geqslant \#\cM_w = 2^{w-1}-d_w.
\]
\end{proof}

\section{An upper bound for $\dim_{\bbF_q} \Qw$}\label{sec_8}

\subsection{The cube identity}

In this subsection, we first prove the cube identity for quadruple-carry relations (Lemma \ref{lem_cube}). 
Then, we use the cube identity to initially normalize quadruple-carry relations.
Recall that the notation ${}^{q+}\fs$ is defined in Definition \ref{def_q+}.

\begin{lemma}[The cube identity]\label{lem_cube}
For any $\fh, \fw \in \cI$ and $\fu, \fv \in \cI_{>0}$, we have
\begin{align*}
&\ \bQ\left( \fh, {}^{q+}\fu, \boxed{{}^{q+}\fv}, \boxed{{}^{q+}\fw} \right) -\bQ\left( \fh, \boxed{{}^{q+}\fu},{}^{q+}\fv,\boxed{{}^{q+}\fw} \right) + \bQ\left( \fh, \boxed{{}^{q+}\fu}, \boxed{{}^{q+}\fv}, {}^{q+}\fw \right) \\
+&\ \bQ\left( \fh, q,\fu, \boxed{{}^{q+}\fv}, \boxed{{}^{q+}\fw} \right) -\bQ\left( \fh, \boxed{{}^{q+}\fu},q,\fv,\boxed{{}^{q+}\fw} \right) + \mathbbm{1}_{\fw \neq \emptyset} \bQ\left( \fh, \boxed{{}^{q+}\fu}, \boxed{{}^{q+}\fv}, q,\fw \right)= \bzero.
\end{align*}
\end{lemma}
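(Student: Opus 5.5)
Since $D_1$ is not a zero divisor in $\mathscr{S}^{(K)}$, it suffices to prove the identity after multiplying by $D_1$. Then each $D_1\bQ$ can be replaced, via Equation \eqref{eqn_DQ=4rho} of Lemma \ref{lem_HL}, by a signed sum of four (or three) carry relations $\brho$. This is the square-boundary picture from the remark after Proposition \ref{prop_gInd0}, now applied to a cube. Write $\fu=(u_1,\widetilde{\fu})$, and similarly for $\fv,\fw$, so that ${}^{q+}\fu=(q+u_1,\widetilde{\fu})$. Each of the three big positions, namely the blocks ${}^{q+}\fu$, ${}^{q+}\fv$ and ${}^{q+}\fw$, is in one of two states: \emph{unsplit}, written ${}^{q+}\fu$, or \emph{split}, written $q,\fu$. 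Hence the $2^3$ tuples
\[
(\fh,\ \alpha,\ \beta,\ \gamma),\qquad \alpha\in\{{}^{q+}\fu,\ (q,\fu)\},\ \beta\in\{{}^{q+}\fv,\ (q,\fv)\},\ \gamma\in\{{}^{q+}\fw,\ (q,\fw)\},
\]
are the vertices of a cube. The carry relations $\brho$ boxing one of the three positions, with the other two frozen in some state, are its $12$ edges. Each of the six $\bQ$'s in the statement boxes two positions and freezes the third, so it corresponds to one of the six faces.

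Concretely, I would expand each $D_1\bQ$ in the statement as $\pm$ four $\brho$'s using \eqref{eqn_DQ=4rho}, with $\fm$ taken to be $\widetilde{\fu}$, ${}^{q+}\fv$, $(q,\fv)$, and so on, as appropriate. For a frozen prefix such as $(\fh,{}^{q+}\fu)$, the $\bQ$ formula treats it as a fixed head, so the edge $\brho$'s appear with that head literally. I would then check that every edge $\brho$ occurs in exactly two faces with opposite signs. The signs in the statement, $+,-,+$ on the unsplit-frozen faces and $+,-,+$ on the split-frozen faces, should be precisely the standard boundary orientation making $\partial\partial=0$ on a cube. So the bookkeeping consists of listing the $12$ edges and matching them pairwise, using the convention that $\brho(\ldots,\boxed{q+r},\ldots)$ with the same marked position and same surrounding tuple is literally the same formal sum. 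After the cancellation, $D_1\cdot(\text{LHS})=\bzero$, and hence $\text{LHS}=\bzero$.

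The main obstacle will be the degenerate cases. When $\fw$ is such that the last box is $\boxed{q}$ (that is, $\fw=\emptyset$, the $r_2=0$ convention), the split state $(q,\fw)$ of the third position does not exist. Then the face $\bQ(\fh,\boxed{{}^{q+}\fu},\boxed{{}^{q+}\fv},q,\fw)$ disappears (the indicator $\mathbbm{1}_{\fw\neq\emptyset}$), and the faces involving $\boxed{{}^{q+}\fw}$ have only three $\brho$ terms via $\mathbbm{1}_{r_2\neq0}$. I must check that the cancellation still closes up: the edges that would have been matched against the missing face should themselves carry the factor $\mathbbm{1}_{r_2\neq0}$ and vanish. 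A second point to verify carefully is that the edge $\brho$'s whose marked position is the first box, when written as arising from different faces, really have identical arguments, including when $\fh=\emptyset$ and when $\widetilde{\fu}$ or $\widetilde{\fv}$ is empty. Since $\brho$ is defined uniformly for all $\fh\in\cI$, this should be automatic, but I would confirm it term by term.
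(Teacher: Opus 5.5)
Your proposal is correct and follows the paper's own argument. You multiply by $D_1$, expand each of the six $D_1\bQ$'s into carry relations via \eqref{eqn_DQ=4rho}, and let the twelve edge relations $\brho$ cancel in pairs, with the case $\fw=\emptyset$ absorbed by the indicators $\mathbbm{1}_{r_2\neq 0}$ and $\mathbbm{1}_{\fw\neq\emptyset}$. Your sign bookkeeping (each edge lying in exactly two faces with opposite signs) checks out, and the paper records exactly this cube picture in the remark following the lemma.
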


\begin{proof}
Multiply by $D_1$ and use \eqref{eqn_DQ=4rho} to express each $D_1 \bQ$ as $\sum_i (\pm \brho_i)$. 
Then all the carry relations $\brho_i$ cancel out.
\end{proof}

\begin{remark}
We can understand the cube identity by the following graph in Figure \ref{fig:cube-identity}.
\begin{figure}[h]
\centering
\begin{tikzpicture}[
scale=1,
markstyle/.style={
sloped,
allow upside down,
fill=white,
inner sep=0.3pt
},
edge/.style={
line width=1.2pt
}
]
\clip (-4,-1) rectangle (4,7);
\coordinate (s)    at (0,0);
\coordinate (s1)   at (-2.8,2);
\coordinate (s2)   at (0,2);
\coordinate (s3)   at (2.8,2);
\coordinate (s12)  at (-2.8,4.2);
\coordinate (s13)  at (0,4.2);
\coordinate (s23)  at (2.8,4.2);
\coordinate (s123) at (0,6.2);
\draw[edge] (s) -- (s1)
node[pos=0.38,markstyle] {\(\scriptstyle\vartriangleright\)}
node[pos=0.62,markstyle] {\(\scriptstyle\vartriangleright\vartriangleright\)};
\draw[edge] (s2) -- (s12)
node[pos=0.38,markstyle] {\(\scriptstyle\vartriangleright\)}
node[pos=0.62,markstyle] {\(\scriptstyle\vartriangleright\vartriangleright\)};
\draw[edge] (s3) -- (s13)
node[pos=0.38,markstyle] {\(\scriptstyle\vartriangleright\)}
node[pos=0.62,markstyle] {\(\scriptstyle\vartriangleright\vartriangleright\)};
\draw[edge] (s23) -- (s123)
node[pos=0.38,markstyle] {\(\scriptstyle\vartriangleright\)}
node[pos=0.62,markstyle] {\(\scriptstyle\vartriangleright\vartriangleright\)};
\draw[edge] (s) -- (s2)
node[pos=0.38,markstyle] {\(\scriptstyle\blacktriangleright\)}
node[pos=0.62,markstyle] {\(\scriptstyle\vartriangleright\)};
\draw[edge] (s1) -- (s12)
node[pos=0.38,markstyle] {\(\scriptstyle\blacktriangleright\)}
node[pos=0.62,markstyle] {\(\scriptstyle\vartriangleright\)};
\draw[edge] (s3) -- (s23)
node[pos=0.38,markstyle] {\(\scriptstyle\vartriangleright\)}
node[pos=0.62,markstyle] {\(\scriptstyle\blacktriangleright\)};
\draw[edge] (s13) -- (s123)
node[pos=0.38,markstyle] {\(\scriptstyle\vartriangleright\)}
node[pos=0.62,markstyle] {\(\scriptstyle\blacktriangleright\)};
\draw[edge] (s) -- (s3)
node[pos=0.38,markstyle] {\(\scriptstyle\blacktriangleright\)}
node[pos=0.62,markstyle] {\(\scriptstyle\blacktriangleright\blacktriangleright\)};
\draw[edge] (s1) -- (s13)
node[pos=0.38,markstyle] {\(\scriptstyle\blacktriangleright\)}
node[pos=0.62,markstyle] {\(\scriptstyle\blacktriangleright\blacktriangleright\)};
\draw[edge] (s2) -- (s23)
node[pos=0.38,markstyle] {\(\scriptstyle\blacktriangleright\)}
node[pos=0.62,markstyle] {\(\scriptstyle\blacktriangleright\blacktriangleright\)};
\draw[edge] (s12) -- (s123)
node[pos=0.38,markstyle] {\(\scriptstyle\blacktriangleright\)}
node[pos=0.62,markstyle] {\(\scriptstyle\blacktriangleright\blacktriangleright\)};
\node[below=8pt] at (s)
{$\fs$};
\node[left=8pt] at (s1)
{$\fs_1$};
\node[below right=2pt] at (s2)
{$\fs_2$};
\node[right=8pt] at (s3)
{$\fs_3$};
\node[left=8pt] at (s12)
{$\fs_{12}$};
\node[above right=2pt] at (s13)
{$\fs_{13}$};
\node[right=8pt] at (s23)
{$\fs_{23}$};
\node[above=8pt] at (s123)
{$\fs_{123}$};
\filldraw[black] (s) circle (2pt);
\filldraw[black] (s1) circle (2pt);
\filldraw[black] (s2) circle (2pt);
\filldraw[black] (s3) circle (2pt);
\filldraw[black] (s12) circle (2pt);
\filldraw[black] (s13) circle (2pt);
\filldraw[black] (s23) circle (2pt);
\filldraw[black] (s123) circle (2pt);
\end{tikzpicture}
\[
\begin{aligned}
\fs      &= (\fh, {}^{q+}\fu, {}^{q+}\fv, {}^{q+}\fw), &
\fs_1    &= (\fh, q,\fu, {}^{q+}\fv, {}^{q+}\fw), \\
\fs_2    &= (\fh, {}^{q+}\fu, q,\fv, {}^{q+}\fw), &
\fs_3    &= (\fh, {}^{q+}\fu, {}^{q+}\fv, q,\fw), \\
\fs_{12} &= (\fh, q,\fu, q,\fv, {}^{q+}\fw), &
\fs_{13} &= (\fh, q,\fu, {}^{q+}\fv, q,\fw), \\
\fs_{23} &= (\fh, {}^{q+}\fu, q,\fv, q,\fw), &
\fs_{123}&= (\fh, q,\fu, q,\fv, q,\fw).
\end{aligned}
\]
\caption{
The cube underlying Lemma~\ref{lem_cube}. 
In the associated quadruple-carry relation, $\blacktriangleright$ denotes the positive sign and $\vartriangleright$ denotes the negative sign. 
The double arrows $\blacktriangleright\blacktriangleright$ and $\vartriangleright\vartriangleright$ represent the positive sign and negative sign of this quadruple‑carry relation within the cube identity.
}
\label{fig:cube-identity}
\end{figure}
\end{remark}

Recall Definition \ref{def_Qw}. 
We have
\begin{align*}
\HLw &= \left\{ \bQ\left( \fh, \boxed{{}^{q+}\fm},\boxed{{}^{q+}\ft} \right) ~\Big|~ \fh \in \cI, \fm \in \cI_{>0}, \ft \in \cI, \wt(\fh,{}^{q+}\fm,{}^{q+}\ft) = w \right\}, \\ 
\Qw &= \Span_{\bbF_q}\HLw.
\end{align*}

\begin{definition}\label{def_Qg}
For any $\fh \in [q]^{\bullet}$ and $\fm \in \cI_{>0}$, define the subspace $\mathscr{Q}_w^{\fh,\fm}$ of $\Qw$ by
\[
\mathscr{Q}_w^{\fh,\fm} \coloneq \Span_{\bbF_q}\left\{ \bQ\left( \fh, \boxed{{}^{q+}\fm},\boxed{{}^{q+}\ft} \right) ~\Big|~ \ft \in \cI, \wt(\fh,{}^{q+}\fm,{}^{q+}\ft) = w  \right\}.
\]
\end{definition}
The point of Definition \ref{def_Qg} is that we restrict $\fh \in [q]^{\bullet}$.

\begin{lemma}[Initial normalization of quadruple-carry relations]\label{lem_ini_nor}
For any $\fg \in \cI$, $\fa \in \cI_{>0}$, and $\fb \in \cI$ with $\wt(\fg,{}^{q+}\fa,{}^{q+}\fb)=w$, we have
\begin{equation}\label{eqn_carno_by_cube}
\bQ\left(\fg,\boxed{{}^{q+}\fa},\boxed{{}^{q+}\fb}\right) \in \sum_{\substack{\fh \in [q]^{\bullet}, \fm \in \cI_{>0} \\ (\fh,{}^{q+}\fm) \succcurlyeq \fg}} \mathscr{Q}_w^{\fh,\fm}. 
\end{equation}
In particular,
\begin{equation}\label{eqn_carnoQw}
\Qw = \sum_{\fh \in [q]^{\bullet}, \fm \in \cI_{>0}} \mathscr{Q}_w^{\fh,\fm}. 
\end{equation}
\end{lemma}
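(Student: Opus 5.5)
Here $\fg$ is an arbitrary head that may contain entries larger than $q$, and the aim is to use the cube identity (Lemma \ref{lem_cube}) to move the first boxed carry leftward until the head preceding it lies in $[q]^{\bullet}$. The argument is an induction, in decreasing order, on the prefix $\fg$ with respect to the (depth, lex)-order $\prec$ among tuples of weight at most $w$. The statement is trivial when $\fg \in [q]^{\bullet}$: take $\fh=\fg$ and $\fm=\fa$, so that $(\fh,{}^{q+}\fm) \succ \fg$ and the relation lies in $\mathscr{Q}_w^{\fg,\fa}$. Equation \eqref{eqn_carnoQw} then follows from \eqref{eqn_carno_by_cube}, because $\Qw$ is spanned by the relations $\bQ(\fg,\boxed{{}^{q+}\fa},\boxed{{}^{q+}\fb})$ and the reverse inclusion is obvious.

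For the inductive step, suppose $\fg \notin [q]^{\bullet}$. Write $\fg = (\fh, {}^{q+}\fu)$ with $\fh \in [q]^{\bullet}$ and $\fu \in \cI_{>0}$, by cutting at the first entry exceeding $q$. If that entry equals $q+u_1$ with $u_1\ge1$, this is immediate. If it equals exactly $q$ (which is impossible, since $q \le q$ puts it in $[q]$), there is nothing to do, so the decomposition is well defined. Apply Lemma \ref{lem_cube} with this $\fh$, with $\fu$, with $\fv=\fa$, and with $\fw=\fb$. The first term of the cube identity is then exactly $\bQ(\fg,\boxed{{}^{q+}\fa},\boxed{{}^{q+}\fb})$, and the identity expresses it as a signed sum of five other relations:
\begin{itemize}
\item $\bQ(\fh,\boxed{{}^{q+}\fu},{}^{q+}\fv,\boxed{{}^{q+}\fw})$ and $\bQ(\fh,\boxed{{}^{q+}\fu},\boxed{{}^{q+}\fv},{}^{q+}\fw)$, and the two further relations whose first box sits at ${}^{q+}\fu$. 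Each of these lies in $\mathscr{Q}_w^{\fh,\fm}$ with either $\fm=\fu$ or $\fm=(\fu,{}^{q+}\fv)$ (respectively $(\fu,q,\fv)$); the second box's trailing part plays the role of $\fm$ together with the prefix. In every case $(\fh,{}^{q+}\fm)$ extends $(\fh,{}^{q+}\fu)=\fg$, so it is $\succcurlyeq \fg$.
\item $\bQ(\fh,q,\fu,\boxed{{}^{q+}\fv},\boxed{{}^{q+}\fw})$, whose head is $\fg'=(\fh,q,\fu)$. This head has greater depth than $\fg$, so $\fg' \succ \fg$, and the induction hypothesis places it in the required sum with $(\fh',{}^{q+}\fm') \succcurlyeq \fg' \succ \fg$.
\end{itemize}

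The step I expect to be the main obstacle is the bookkeeping needed to check that every term has the correct shape and the correct order. Specifically, one must confirm for each term that the first boxed position lies after a $[q]^{\bullet}$-prefix, after re-expressing the middle segment as ${}^{q+}\fm$. When $\fw=\emptyset$ the sixth term is absent and the third term is degenerate, so the edge case where the ${}^{q+}\fw$ entry equals $q$ needs separate attention. I also need the induction to be well founded, which holds because heads of bounded weight form a finite set. Where a term's head is not in $[q]^{\bullet}$ but is strictly larger than $\fg$, I simply invoke the induction hypothesis as well, since the comparison $(\fh,{}^{q+}\fm)\succcurlyeq$ head $\succ \fg$ is all that is required.
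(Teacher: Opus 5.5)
Your proposal is correct and follows the paper's proof. You apply the cube identity with $\fh=\fh_0$, $\fu=\fm_0$, $\fv=\fa$, $\fw=\fb$; the four relations whose first box sits at ${}^{q+}\fm_0$ then lie in spaces $\mathscr{Q}_w^{\fh_0,\fm}$ with $(\fh_0,{}^{q+}\fm)\succcurlyeq\fg$, and only the term with head $(\fh_0,q,\fm_0)\succ\fg$ remains. The paper disposes of that term by repeating the process, and you do so by reverse induction on $\fg$, which amounts to the same argument.

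The edge case $\fw=\emptyset$ that you flag needs no separate treatment. In that case the third cube term is simply $\bQ(\fh_0,\boxed{{}^{q+}\fm_0},\boxed{{}^{q+}(\fa,q)})$, which lies in $\mathscr{Q}_w^{\fh_0,\fm_0}$.
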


\begin{proof}
If $\fg \in [q]^{\bullet}$, then $\bQ\left(\fg,\boxed{{}^{q+}\fa},\boxed{{}^{q+}\fb}\right) \in \mathscr{Q}_w^{\fg,\fa}$ and \eqref{eqn_carno_by_cube} holds (since $(\fg,{}^{q+}\fa) \succ \fg$).
If $\fg \notin [q]^{\bullet}$, then we can write $\fg=(\fh_0,{}^{q+}\fm_0)$ for some $\fh_0 \in [q]^{\bullet}$ and $\fm_0 \in \cI_{>0}$.
By Lemma \ref{lem_cube}, we have
\[
\bQ\left(\fg,\boxed{{}^{q+}\fa},\boxed{{}^{q+}\fb}\right) \in -\bQ\left(\fg^\prime,\boxed{{}^{q+}\fa},\boxed{{}^{q+}\fb}\right) + \sum_{\substack{\fh \in [q]^{\bullet}, \fm \in \cI_{>0} \\ (\fh,{}^{q+}\fm) \succcurlyeq \fg}} \mathscr{Q}_w^{\fh,\fm},
\]
where $\fg^\prime = (\fh_0, q,\fm_0) \succ \fg$.
Repeat the process on $\bQ\left(\fg^\prime,\boxed{{}^{q+}\fa},\boxed{{}^{q+}\fb}\right)$. The process terminates in finitely many steps, thereby completing the proof of \eqref{eqn_carno_by_cube}.
The second assertion \eqref{eqn_carnoQw} follows from \eqref{eqn_carno_by_cube}.
\end{proof}

\subsection{The linear operators $\sfE$, $\sfC$, $\sfL$, and $\sfZ$}

Since we need to deal with some identities involving too many quadruple-carry relations in the next subsection, we introduce the following more compact notation.
\begin{definition}\label{def_ECL}
Define the linear operators $\sfE$, $\sfC$, $\sfL$: $\mathscr{S}^{(\bbF_q)} \rightarrow \mathscr{S}^{(\bbF_q)}$ by setting for each $\fs \in \cI$ that
\begin{align*}
\sfE\fs &\coloneq \mathbbm{1}_{\fs \neq \emptyset} \cdot \fs^{+} + (\fs,1), \\
\sfC\fs &\coloneq {}^{q+}\fs + \mathbbm{1}_{\fs \neq \emptyset} \cdot (q,\fs), \\
\sfL\fs &\coloneq (q-1)*\fs. 
\end{align*}
\end{definition}

\begin{example}[A compact form of quadruple-carry relations]\label{lem_compact_form_of_Q}
By the definition of quadruple-carry relations (see Lemma \ref{lem_HL}) and Definition \ref{def_ECL},
for any $\fh, \fv \in \cI$ and $\fu \in \cI_{>0}$, we have
\[
\bQ\left( \fh, \boxed{{}^{q+}\fu},\boxed{{}^{q+}\fv} \right) = \Big( \sfE(\fh,\sfC\fu), \sfL\fv \Big) - \Big( \sfE\fh,\sfL(\fu,\sfC\fv) \Big).
\]
\end{example}

\begin{example}[A compact form of the cube identity]
By Lemma \ref{lem_cube} and Definition \ref{def_ECL}, for any $\fh, \fw \in \cI$ and $\fu, \fv \in \cI_{>0}$, we have
\[
\bQ\left( \fh,\sfC\fu, \boxed{{}^{q+}\fv}, \boxed{{}^{q+}\fw} \right) -\bQ\left( \fh, \boxed{{}^{q+}\fu},\sfC\fv,\boxed{{}^{q+}\fw} \right) + \bQ\left( \fh, \boxed{{}^{q+}\fu}, \boxed{{}^{q+}\fv}, \sfC\fw \right) = \bzero.
\]
\end{example}

The following linear operator $\sfZ$ is important.

\begin{definition}\label{def_Z_operator}
Define the linear operator $\sfZ$: $\Sw \rightarrow \Sw$ by setting for each $\fs \in \cI_w$ that
\[
\sfZ\fs \coloneq \sum_{\ft \in \operatorname{Coars}(\fs)} \ft \in \Sw,
\]
where
\[
\operatorname{Coars}(\fs) \coloneq \left\{ (s_1 \circ_1 s_2 \circ_2 s_3 \cdots \circ_{k-1} s_k) ~\Big|~ \text{each $\circ_i$ is either the comma , or the plus +} \right\}
\]
for $\fs=(s_1,\ldots,s_k)$ with $\dep(\fs) = k \ge 2$,
with the convention $\operatorname{Coars}(\fs) = \{\fs\}$ for $\dep(\fs) \le 1$.
\end{definition}

The following lemma is well known.

\begin{lemma}\label{lem_Z}
The linear operator $\sfZ$ is invertible; its inverse linear operator $\sfZ^{-1}$ satisfies
\[
\sfZ^{-1}\fs = \sum_{\ft \in \operatorname{Coars}(\fs)} (-1)^{\dep(\fs)-\dep(\ft)} \ft
\]
for any $\fs \in \cI_w$.
Moreover, we have
\[
\LiA(\sfZ\fs) = \LiAS(\fs), \quad \LiAS(\sfZ^{-1}\fs)=\LiA(\fs).
\]
\end{lemma}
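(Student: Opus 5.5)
The statement has two parts: the invertibility of $\sfZ$ with the explicit M\"obius-type inverse, and the two evaluation identities. I will handle them separately. Both rest on one combinatorial observation. Write $\fs=(s_1,\ldots,s_k)$ and encode each coarsening by the set $P\subseteq[k-1]$ of positions where a plus is used. Then $\operatorname{Coars}(\fs)$ is in bijection with subsets of $[k-1]$, and coarsening is transitive: a coarsening of a coarsening of $\fs$ is a coarsening of $\fs$. More precisely, a coarsening $\ft$ of $\fs$ with plus-set $P$ corresponds to a composition of $[k-1]\setminus P$. Coarsenings of $\ft$ correspond to supersets $P'\supseteq P$, and $\dep(\ft)=k-|P|$. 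Hence $\operatorname{Coars}(\fs)$ is a Boolean lattice under refinement.

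For invertibility, $\sfZ$ is unitriangular with respect to depth. Indeed, $\sfZ\fs=\fs+(\text{tuples of strictly smaller depth})$, and $\Sw$ is finite dimensional, so $\sfZ$ is invertible. To identify the inverse, denote by $\sfZ'$ the proposed operator and compute $\sfZ'\sfZ\fs$. It equals
\[
\sum_{P\subseteq P'\subseteq[k-1]}(-1)^{|P'|-|P|}\,\fs_{P'},
\]
where $\fs_{P'}$ denotes the coarsening with plus-set $P'$ and the sign is computed from depth differences via $\dep(\fs_P)-\dep(\fs_{P'})=|P'|-|P|$. For a fixed $P'$, summing over $P\subseteq P'$ gives $\sum_{P\subseteq P'}(-1)^{|P'|-|P|}$, which vanishes unless $P'=\emptyset$. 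Therefore $\sfZ'\sfZ\fs=\fs$. One must be careful that the sign in $\sfZ'$ is $(-1)^{\dep(\fs)-\dep(\ft)}$ applied to each intermediate tuple; the transitivity of coarsening makes this bookkeeping consistent. The identity $\sfZ\sfZ'=\mathrm{id}$ then follows by finite dimensionality, or by the same computation.

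For $\LiA(\sfZ\fs)=\LiAS(\fs)$, I would decompose the star sum over $n_1\ge\cdots\ge n_k$ according to which consecutive inequalities are equalities. The set $P=\{i: n_i=n_{i+1}\}$ ranges over all subsets of $[k-1]$. On the block where exactly these equalities hold, merging equal indices turns $\prod \ell_{n_i}^{-s_i}$ into the strict-inequality summand of $\LiA(\fs_P)$, since $\ell_n^{-a}\ell_n^{-b}=\ell_n^{-(a+b)}$. Summing over $P$ gives $\sum_P\LiA(\fs_P)=\LiA(\sfZ\fs)$. All sums converge absolutely in $K_\infty$ because $|\ell_n|_\infty\to\infty$, so the rearrangement is justified. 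The second identity follows by applying the first to $\sfZ^{-1}\fs$ and using linearity: $\LiAS(\sfZ^{-1}\fs)=\LiA(\sfZ\sfZ^{-1}\fs)=\LiA(\fs)$. Here $\LiAS$ is extended linearly to formal sums, as is done for $\LiA$.

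The only step needing care is the sign and lattice bookkeeping in the inverse formula. This reduces to the standard M\"obius function of the Boolean lattice, so I expect no real obstacle.
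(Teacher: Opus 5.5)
Your proof is correct. The paper gives no proof of this lemma and simply calls it well known. Your argument is the standard one it alludes to: M\"obius inversion on the Boolean lattice of plus-sets $P\subseteq[k-1]$, together with splitting the star sum over $n_1\ge\cdots\ge n_k$ according to the equality pattern $\{i : n_i=n_{i+1}\}$.

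One point you should state explicitly is that $P\mapsto\fs_P$ is injective. This holds because the partial sums $s_1+\cdots+s_j$ are strictly increasing (all entries are positive), and $\fs_P$ records exactly those with $j\notin P$. You need this because $\operatorname{Coars}(\fs)$ is defined as a set. Both your inversion computation and the identity $\sum_P\LiA(\fs_P)=\LiA(\sfZ\fs)$ require the sum over $P$ to agree with the sum over $\operatorname{Coars}(\fs)$.
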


\subsection{The six-family-$\bQ$ identity}

The next lemma is a straightforward computation.
We state it in a compact form using the operators $\sfE$, $\sfC$, and $\sfL$ defined in Definition \ref{def_ECL}.
As mentioned in Definition \ref{def_q+}, whenever a tuple-valued argument of $\bQ, \sfE,\sfC,\sfL,\sfZ,{}^{q+}(\cdot),(\cdot)^{+}$ is a formal sum, the resulting expression is interpreted multilinearly.
For example, $\sfL(\fs,c_1\ft_1+c_2\ft_2) = c_1\sfL(\fs,\ft_1) + c_2\sfL(\fs,\ft_2)$ and $(c_1\ft_1+c_2\ft_2)^{+}=c_1\ft_1^{+}+c_2\ft_2^{+}$ for any $\fs,\ft_1,\ft_2 \in \cI$ and $c_1,c_2 \in \bbF_q$.
\begin{lemma}
For any formal sums $\bA, \bB \in \mathscr{S}^{(\bbF_q)}$, we have
\begin{align}
\sfL\left(\sfE\bA,\bB\right) &= \left( \sfE\bA, \sfL\bB \right) + \left( \sfE\sfL\bA, \bB \right), \label{eqn_ELC_1}\\
\sfL(\bA,\sfC\bB) &= \left( \sfL\bA, \sfC\bB \right) + \left( \bA, \sfC\sfL\bB \right), \label{eqn_ELC_2}\\
\sfE\sfC\bA &= \sfC\sfE\bA. \label{eqn_EC=CE}
\end{align}
\end{lemma}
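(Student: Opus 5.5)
By bilinearity of both sides in $\bA,\bB$ (and linearity of $\sfE,\sfC,\sfL$), it suffices to verify the three identities for single tuples $\bA=\fa$ and $\bB=\fb$. The plan is to argue by unfolding the recursive definition of the stuffle product $(q-1)*(\cdot)$.

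For \eqref{eqn_EC=CE}, I would compute both sides directly from Definition \ref{def_ECL}. For $\fa=\emptyset$: $\sfC\emptyset=(q)$, so $\sfE\sfC\emptyset=(q+1)+(q,1)$, while $\sfE\emptyset=(1)$ gives $\sfC(1)=(q+1)+(q,1)$. For $\fa\neq\emptyset$:
\[
\sfE\sfC\fa=({}^{q+}\fa)^{+}+({}^{q+}\fa,1)+(q,\fa)^{+}+(q,\fa,1),
\]
and $\sfC\sfE\fa$ gives the same four terms. Here one uses that ${}^{q+}$ acts on the first entry and ${}^{+}$ on the last, and these commute even in depth one, since $({}^{q+}(a))^{+}=(q+a+1)={}^{q+}((a)^{+})$.

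For \eqref{eqn_ELC_2}, I would use the standard splitting of the stuffle with a single letter $(q-1)$ into three kinds of terms: insertion before an entry, adding $q-1$ to an entry, and appending at the end. Concretely,
\[
(q-1)*(\fa,\fb')=\big((q-1)*\fa,\fb'\big)+\big(\fa,(q-1)\circledast\fb'\big),
\]
where $(q-1)\circledast\fb'$ denotes the terms in which $q-1$ is inserted before, or added into, an entry of $\fb'$, or appended at the end. More precisely, it is $(q-1)*\fb'$ minus the term with $q-1$ placed in front, and the term where $q-1$ is placed between $\fa$ and $\fb'$ is counted in the first summand. Apply this with $\fb'={}^{q+}\fb$ and $\fb'=(q,\fb)$. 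For $\fb=(b_1,\tilde\fb)$, the terms of $(q-1)\circledast$ on ${}^{q+}\fb$ and on $(q,\fb)$ combine into $\sfC$ applied to $(q-1)*\fb$. Indeed, adding $q-1$ to the entry $q+b_1$ matches ${}^{q+}$ of the sum term $(q-1+b_1,\dots)$. Adding $q-1$ to the separate entry $q$ gives $(2q-1,\fb)={}^{q+}((q-1),\fb)$. Inserting $q-1$ right after the $q$ gives $(q,q-1,\fb)$, the $(q,\cdot)$-part of $\sfC((q-1),\fb)$. Later insertions match term by term. The empty cases $\fb=\emptyset$ (where $\sfC\emptyset=(q)$) and $\fa=\emptyset$ are checked separately.

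For \eqref{eqn_ELC_1}, the argument is mirror-symmetric. Split $(q-1)*(\fa',\fb)=\big((q-1)\circledast'\fa',\fb\big)+\big(\fa',(q-1)*\fb\big)$, where $\circledast'$ excludes placing $q-1$ after the last entry of $\fa'$. Take $\fa'=\fa^{+}$ and $\fa'=(\fa,1)$. The terms in which $q-1$ interacts with the final $+1$ or the appended $1$ reassemble into $\sfE((q-1)*\fa)$: adding $q-1$ to the trailing $1$ gives $(\fa,q)=((\fa,q-1))^{+}$, and inserting $q-1$ before the trailing $1$ gives $(\fa,q-1,1)$. This is the main bookkeeping obstacle. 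One must check that every term of $\sfE\sfL\fa$ appears exactly once, with no cross terms left over. I would do this by classifying the terms of $\sfL\fa$ by where $q-1$ sits relative to the last entry of $\fa$, and handle $\fa=\emptyset$ (where $\sfE\emptyset=(1)$) separately.
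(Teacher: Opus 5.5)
Your proposal is correct and takes essentially the paper's route: reduce to single tuples by multilinearity, then expand the stuffle with the single letter $(q-1)$ according to where it lands relative to a split point, treating $\fa=\emptyset$ separately. The paper does this via the equivalent identity $\sfL(\fa,m,\fb)=(\sfL\fa,m,\fb)+(\fa,m+q-1,\fb)+(\fa,m,\sfL\fb)$ with $\fa=(\fa',a_k)$; your term-by-term matchings for the second and third identities, whose details the paper omits, also check out.
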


\begin{proof}
By multilinearity, it suffices to prove the case $\bA = \fa \in \cI$ and $\bB=\fb \in \cI$.
Note that for any $\fa,\fb \in \cI$ and $m \in \bbZ_+$, by the definition of stuffle product, we have
\[
\sfL(\fa,m,\fb) = (\sfL\fa,m,\fb) + (\fa,m+q-1,\fb) + (\fa,m,\sfL\fb).
\]

Using the above equality, we first prove \eqref{eqn_ELC_1} for the case $\fa \neq \emptyset$.
Assume $\fa=(\fa^\prime,a_k)$, where $\fa^\prime\in\cI$ and $a_k \in \bbZ_{+}$. 
Then
\begin{align*}
\sfL(\sfE\fa,\fb) &= \sfL(\fa^+,\fb) + \sfL(\fa,1,\fb) \\
&=(\sfL\fa^\prime,a_k+1,\fb) + (\fa^\prime,a_k+q,\fb) + (\fa^{+},\sfL\fb) + (\sfL\fa,1,\fb) + (\fa,q,\fb) + (\fa,1,\sfL\fb).
\end{align*}
On the other hand,
\begin{align*}
(\sfE\fa,\sfL\fb)&=(\fa^+,\sfL\fb) + (\fa,1,\sfL\fb), \\
(\sfE\sfL\fa,\fb) &= (\sfE(\sfL\fa^\prime,a_k),\fb) + (\sfE(\fa^\prime,a_k+q-1),\fb) + (\sfE(\fa,q-1),\fb) \\
&= (\sfL\fa^\prime,a_k+1,\fb) + (\sfL\fa^\prime,a_k,1,\fb) +(\fa^\prime,a_k+q,\fb) + (\fa^\prime,a_k+q-1,1,\fb) \\
&\quad\ +(\fa,q,\fb) + (\fa,q-1,1,\fb).
\end{align*}
Noting that $\sfL\fa=(\sfL\fa^\prime,a_k)+(\fa^\prime,a_k+q-1)+(\fa,q-1)$, we deduce from the above equations that $\sfL(\sfE\fa,\fb) = (\sfE\fa,\sfL\fb) + (\sfE\sfL\fa,\fb)$.
For the case $\fa=\emptyset$, it is easier to check that $\sfL(\sfE\fa,\fb)$ and $(\sfE\fa,\sfL\fb) + (\sfE\sfL\fa,\fb)$ are both equal to $(q-1,1,\fb)+(q,\fb)+(1,\sfL\fb)$.
Therefore, the proof of \eqref{eqn_ELC_1} is complete. 

The proofs of \eqref{eqn_ELC_2} and \eqref{eqn_EC=CE} are similar to the above. 
We omit the lengthy computational details here.
\end{proof}

The next Lemma \ref{lem_6Q} (six-family-$\bQ$ identity) is contributed by ChatGPT (GPT-5.6 Sol, OpenAI).
It (together with the cube identity) provides enough identities among quadruple-carry relations to obtain an upper bound for $\dim_{\bbF_q} \Qw$.

Let $\mathscr{S}^{(\bbF_q)}_{>0} = \bigoplus_{w=1}^{\infty} \Sw$ be the $\bbF_q$-linear space of formal sums of non-empty tuples.
\begin{lemma}[The six-family-$\bQ$ identity]\label{lem_6Q}
For any formal sums $\bH,\bB \in \mathscr{S}^{(\bbF_q)}$ and $\bM,\bA \in \mathscr{S}_{>0}^{(\bbF_q)}$, we have
\begin{align}
&\bQ\left( \bH, \boxed{{}^{q+}\bM}, \boxed{{}^{q+}(\sfE\bA,\sfL\bB)} \right) - \bQ\left( \bH, \boxed{{}^{q+}(\sfE\bM,\bA)},\boxed{{}^{q+}\sfL\bB} \right) \notag\\
-&\bQ\left( \bH,\boxed{{}^{q+}(\sfE\bM,\sfL\bA)},\boxed{{}^{q+}\bB} \right) + \bQ\left( \sfE\bH,\bM,\boxed{{}^{q+}\bA},\boxed{{}^{q+}\sfL\bB} \right) \notag\\
+&\bQ\left( \sfE\bH,\bM,\boxed{{}^{q+}\sfL\bA},\boxed{{}^{q+}\bB} \right) +\bQ\left( \sfE\bH,\sfL\bM,\boxed{{}^{q+}\bA},\boxed{{}^{q+}\bB} \right) = \bzero. \label{eqn_6Q}
\end{align}
\end{lemma}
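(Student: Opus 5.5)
The plan is a direct verification at the level of formal sums, with no $\LiA$-evaluation. By multilinearity it suffices to take $\bH=\fh$, $\bB=\fb$ in $\cI$ and $\bM=\fm$, $\bA=\fa$ in $\cI_{>0}$. Each of the six terms in \eqref{eqn_6Q} is then rewritten with the compact form of Example~\ref{lem_compact_form_of_Q}:
\[
\bQ\left(\fh,\boxed{{}^{q+}\fu},\boxed{{}^{q+}\fv}\right)=\left(\sfE(\fh,\sfC\fu),\sfL\fv\right)-\left(\sfE\fh,\sfL(\fu,\sfC\fv)\right).
\]
For the last three terms the prefix ``$\fh$'' is the formal sum $(\sfE\fh,\fm)$ or $(\sfE\fh,\sfL\fm)$.

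Next I would record the elementary rules needed to push $\sfE$, $\sfC$ and $\sfL$ through concatenations:
\begin{itemize}
\item $\sfE$ acts only on the last entry, so $\sfE(\bX,\bY)=(\bX,\sfE\bY)$ for $\bY\in\mathscr{S}_{>0}^{(\bbF_q)}$.
\item $\sfC$ acts only on the first entry, so $\sfC(\bX,\bY)=(\sfC\bX,\bY)$ for $\bX\in\mathscr{S}_{>0}^{(\bbF_q)}$, and similarly ${}^{q+}(\bX,\bY)=({}^{q+}\bX,\bY)$.
\item Together with \eqref{eqn_EC=CE} this gives $\sfE(\fh,\sfC\bX)=(\fh,\sfC\sfE\bX)$, and also $\sfC(\sfE\fa,\sfL\fb)=(\sfC\sfE\fa,\sfL\fb)$, because $\sfE\fa$ is a sum of non-empty tuples.
\end{itemize}
I would also use the general splitting of $\sfL$ over a concatenation, which follows from the stuffle recursion exactly as in the displayed formula for $\sfL(\fa,m,\fb)$:
\[
\sfL(\bX,\bY)=(\sfL\bX,\bY)+(\bX,\sfL\bY)-(\bX,q-1,\bY).
\]
With these, every term reduces to a concatenation of blocks of the form $\fh$ or $\sfE\fh$, then $\sfC\sfE\fm$ or $\fm$ or $\sfL\fm$, then pieces built from $\fa$, then pieces built from $\fb$. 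Whenever possible I would apply \eqref{eqn_ELC_1} and \eqref{eqn_ELC_2} in place of the general splitting, because they carry no correction term. The key instances are
\[
\sfL(\sfE\fa,\sfL\fb)=(\sfE\fa,\sfL\sfL\fb)+(\sfE\sfL\fa,\sfL\fb),\qquad \sfL(\fm,\sfC\bX)=(\sfL\fm,\sfC\bX)+(\fm,\sfC\sfL\bX).
\]

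The terms then naturally sort into two groups. The first group consists of the ``$(\fh,\sfC\sfE\ldots)$'' parts of the first three $\bQ$'s. After expanding $\sfL(\sfE\fa,\sfL\fb)$ and $\sfL(\sfE\fm,\fa)$ via \eqref{eqn_ELC_1}, these should cancel in pairs. The second group consists of the ``$(\sfE\fh,\ldots)$'' parts of the first three $\bQ$'s together with the whole of the last three $\bQ$'s. After expanding $\sfL(\fm,\sfC\cdots)$ via \eqref{eqn_ELC_2}, and $\sfE(\sfE\fh,\fm,\sfC\fa)=(\sfE\fh,\fm,\sfC\sfE\fa)$, these should also cancel pairwise.

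I expect the main obstacle to be the second group. It contains iterated expressions such as $\sfL(\fm,\sfC\sfE\fa,\sfL\fb)$ and $\sfL(\fa,\sfC\fb)$ nested inside $\sfL$, and the correction terms $(\bX,q-1,\bY)$ from the general splitting must be shown either never to arise or to cancel among themselves. The first thing I would try is to arrange every application of $\sfL$ to a concatenation so that the right block begins with $\sfC$ or the left block ends with $\sfE$. Then only \eqref{eqn_ELC_1} and \eqref{eqn_ELC_2} are used, and the bookkeeping becomes a finite matching of monomials in $\sfE,\sfC,\sfL$. The small edge cases, such as $\fh=\emptyset$ or $\fb=\emptyset$, where indicator terms in $\sfE$ and $\sfC$ degenerate, would be checked separately at the end.
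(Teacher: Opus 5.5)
Your plan is correct and is essentially the paper's proof. You expand all six terms via the compact form, using $\sfE(\fh,\sfC\fu)=(\fh,\sfE\sfC\fu)$ and $\sfE\sfC=\sfC\sfE$, and split into two parts. The $(\fh,\sfC\sfE\fm,\cdot)$ part vanishes after one application of \eqref{eqn_ELC_1} to $\sfL(\sfE\fa,\sfL\fb)$. The $(\sfE\fh,\cdot)$ part vanishes using only \eqref{eqn_ELC_1} and \eqref{eqn_ELC_2}, so the correction term $(\fu,q-1,\fv)$ of the general splitting never appears and no edge cases need separate treatment.

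The only slip is bookkeeping. $\sfL(\sfE\fm,\fa)$ does not occur in the first group. Besides your \eqref{eqn_ELC_2}-expansion of $\sfL(\fm,\sfC\sfE\fa,\sfL\fb)$, the second group also needs the \eqref{eqn_ELC_1}-expansions of $\sfL(\sfE\fm,\fa,\sfC\sfL\fb)$ and $\sfL(\sfE\fm,\sfL\fa,\sfC\fb)$, and a final \eqref{eqn_ELC_2} on $\sfL(\fa,\sfC\fb)$.
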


\begin{proof}
By Example \ref{lem_compact_form_of_Q}, for any $\bM \in \mathscr{S}_{>0}^{(\bbF_q)}$ and $\bH,\bT \in \mathscr{S}^{(\bbF_q)}$, we have
\begin{equation}\label{eqn_Q_to_ECL}
\bQ\left( \bH,\boxed{{}^{q+}\bM},\boxed{{}^{q+}\bT} \right) = \Big( \bH,\sfE\sfC\bM,\sfL\bT \Big) - \Big( \sfE\bH, \sfL\left(\bM,\sfC\bT \right) \Big).
\end{equation}
By \eqref{eqn_EC=CE}, we have $\sfE\sfC=\sfC\sfE$; denote this operator by $\sfD$.
Using \eqref{eqn_Q_to_ECL} to expand each term on the left-hand side of \eqref{eqn_6Q} and collecting terms by multilinearity, we obtain
\[
\text{LHS of \eqref{eqn_6Q}} = \left(\bH,\sfD\bM,\bS_1\right) + \left(\sfE\bH,\bS_2\right),
\]
where
\begin{align}
\bS_1 =&~ \sfL\left( \sfE\bA,\sfL\bB \right) - \left( \sfE\bA, \sfL\sfL\bB \right) - \left( \sfE\sfL\bA,\sfL\bB \right), \notag\\
\bS_2 =&~ -\sfL\Big( \sfE(\bM,\sfC\bA),\sfL\bB \Big) + \sfL\Big( \sfE\bM, \bA, \sfC\sfL\bB \Big) + \sfL\Big( \sfE\bM, \sfL\bA, \sfC\bB \Big) \notag\\
&~ +\Big( \sfE(\bM,\sfC\bA),\sfL\sfL\bB \Big) - \Big( \sfE\bM, \sfL(\bA,\sfC\sfL\bB) \Big) \notag\\
&~ +\Big( \sfE(\bM,\sfC\sfL\bA), \sfL\bB \Big) - \Big( \sfE\bM,\sfL(\sfL\bA,\sfC\bB) \Big) \notag\\
&~ +\Big( \sfE(\sfL\bM,\sfC\bA),\sfL\bB \Big) - \Big( \sfE\sfL\bM, \sfL(\bA,\sfC\bB) \Big). \label{eqn_S2}
\end{align}

By \eqref{eqn_ELC_1}, we have $\bS_1=\bzero$.
By \eqref{eqn_ELC_1} and \eqref{eqn_ELC_2}, we have
\begin{align}
-\sfL\Big( \sfE(\bM,\sfC\bA),\sfL\bB \Big)   \overset{\eqref{eqn_ELC_1}}{=}&~ -\Big( \sfE(\bM,\sfC\bA),\sfL\sfL\bB \Big) - \Big( \sfE\sfL(\bM,\sfC\bA),\sfL\bB \Big)  \notag\\
\overset{\eqref{eqn_ELC_2}}{=}&~ -\Big( \sfE(\bM,\sfC\bA),\sfL\sfL\bB \Big) - \Big( \sfE(\sfL\bM,\sfC\bA),\sfL\bB \Big) - \Big( \sfE(\bM,\sfC\sfL\bA),\sfL\bB \Big), \label{eqn_L1} \\
\sfL\Big( \sfE\bM, \bA, \sfC\sfL\bB \Big)  \overset{\eqref{eqn_ELC_1}}{=}&~ \Big( \sfE\bM, \sfL(\bA,\sfC\sfL\bB) \Big) + \Big( \sfE\sfL\bM, \bA, \sfC\sfL\bB \Big), \label{eqn_L2} \\
\sfL\Big( \sfE\bM, \sfL\bA, \sfC\bB \Big) \overset{\eqref{eqn_ELC_1}}{=}&~ \Big( \sfE\bM,\sfL(\sfL\bA,\sfC\bB) \Big) + \Big( \sfE\sfL\bM, \sfL\bA, \sfC\bB \Big). \label{eqn_L3}
\end{align}
Substituting \eqref{eqn_L1}--\eqref{eqn_L3} into \eqref{eqn_S2}, we obtain
\[
\bS_2 = - \Big( \sfE\sfL\bM, \sfL(\bA,\sfC\bB) \Big) + \Big( \sfE\sfL\bM, \bA, \sfC\sfL\bB \Big) + \Big( \sfE\sfL\bM, \sfL\bA, \sfC\bB \Big).
\]
Applying \eqref{eqn_ELC_2} again to $\sfL(\bA,\sfC\bB)$, we deduce that $\bS_2=\bzero$.
The proof of \eqref{eqn_6Q} is complete.
\end{proof}

\subsection{Another basis of $\mathscr{S}^{(\bbF_q)}$}
Note that by definition, $\cI$ is a basis of $\mathscr{S}^{(\bbF_q)}$.
In this subsection, we make use of another basis of $\mathscr{S}^{(\bbF_q)}$, together with the six-family-$\bQ$ identity (Lemma \ref{lem_6Q}) and the initial normalization lemma (Lemma \ref{lem_ini_nor}) to prove that $\dim_{\bbF_q} \Qw \leqslant 2^{w-1} - d_w$ for any weight $w \ge 1$.

Note that for any tuple $\fs \in \cI_{>0}$, we have 
\[
\sfL\fs = (q-1)*\fs = (q-1)\shuffle\fs + (q-1)\widetilde{*}\fs, 
\]
where $\shuffle$ is the shuffle product and $\widetilde{*}$ is the sticky product, see Remark \ref{rmk_HL}.
Now, we define a related linear operator $\sfLd: \mathscr{S}^{(\bbF_q)} \rightarrow \mathscr{S}^{(\bbF_q)}$ by setting for each $\fs \in \cI_{>0}$ that
\[
\sfLd\fs \coloneq (q-1)\shuffle\fs - (q-1)\widetilde{*}\fs,
\]
and by setting $\sfLd\emptyset \coloneq (q-1)$.

For any non-empty tuples $\fa=(a_1,\ldots,a_k),\fb=(b_1,\ldots,b_\ell) \in \cI_{>0}$, define 
\[
\fa \boxplus \fb \coloneq (a_1,\ldots,a_{k-1},a_k+b_1,b_2,\ldots,b_\ell).
\] 
By convention, $\emptyset\boxplus\fa=\fa\boxplus\emptyset\coloneq\bzero \in \mathscr{S}^{(\bbF_q)}$ for any $\fa \in \cI$.

\begin{lemma}\label{lem_Ld}
For any $\bS \in \mathscr{S}^{(\bbF_q)}$, we have
\begin{equation}\label{eqn_LZ=ZLd}
\sfL\sfZ\bS = \sfZ\sfLd\bS.
\end{equation}
\end{lemma}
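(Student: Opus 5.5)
We verify \eqref{eqn_LZ=ZLd} on basis elements by a bijective coefficient count. Both sides are $\bbF_q$-linear in $\bS$, so it suffices to take $\bS=\fs\in\cI_w$. The case $\fs=\emptyset$ is immediate: $\sfL\sfZ\emptyset=(q-1)=\sfZ\sfLd\emptyset$. So assume $\fs=(s_1,\ldots,s_k)$ with $k\ge1$. We compute the coefficient of every tuple on both sides and show they agree.

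\emph{Left-hand side.} A coarsening $\ft\in\operatorname{Coars}(\fs)$ is the same as a partition of $s_1,\ldots,s_k$ into consecutive blocks $B_1,\ldots,B_\ell$. Then $\sfL\ft=(q-1)\shuffle\ft+(q-1)\widetilde{*}\ft$ contributes two kinds of terms, each with coefficient $1$: the tuple obtained by inserting a standalone part $q-1$ into one of the $\ell+1$ gaps between the blocks, and the tuple obtained by adding $q-1$ to the sum of one block $B_j$. Every tuple in the expansion is uniquely of one of these two types together with the data $(\ft,\text{gap})$ or $(\ft,j)$. Hence each resulting tuple has coefficient exactly $1$ on the left.

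\emph{Right-hand side.} Here $\sfLd\fs=\sum_{i=0}^{k}(s_1,\ldots,s_i,q-1,s_{i+1},\ldots,s_k)-\sum_{i=1}^{k}(s_1,\ldots,s_i+q-1,\ldots,s_k)$, and we apply $\sfZ$ to each term.

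For an inserted word, a coarsening is given by a choice of comma or plus at each of its $k+1$ internal positions. We classify by the two positions adjacent to the letter $q-1$. If both are commas, we obtain exactly ``$\ft$ with $q-1$ inserted as a standalone part at a gap of $\ft$''. Here $\ft$ is determined by the remaining choices, and the gap index $i$ is then forced to be a block boundary of $\ft$, or an end. So each standalone-type term again has coefficient $1$.

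Otherwise $q-1$ is absorbed into some block $B$ of a coarsening $\ft$ of $\fs$. Say $B$ consists of $s_j,\ldots,s_l$. Then the term ``$B$ augmented by $q-1$'' arises from the shuffle part in exactly $l-j+2$ ways. These are the $l-j$ internal insertion gaps of $B$, with plus on both sides of $q-1$, together with the gap just before $s_j$ (comma on the left, plus on the right) and the gap just after $s_l$ (plus on the left, comma on the right). The same term arises from the sticky part in exactly $l-j+1$ ways, namely adding $q-1$ to any $s_i$ with $j\le i\le l$, each with sign $-1$. The net coefficient is $(l-j+2)-(l-j+1)=1$, matching the left-hand side.

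The main obstacle is making this classification rigorous without double counting. One must check that, when $q-1$ is inserted at a boundary gap and glued on both sides, the resulting term is correctly attributed to the merged block (the union of the neighbouring blocks) rather than counted twice. One must also check that the edge gaps $i=0$ and $i=k$ behave correctly: at these positions only one adjacent slot exists, and it must be read as a forced comma. I would formalize this by encoding each term as a pair consisting of a comma/plus word on the $k-1$ original positions and a marked location for $q-1$, and then exhibiting the explicit sign-reversing bookkeeping above. Apart from that, the argument is a finite combinatorial identity and needs no earlier results beyond the definitions of $\sfZ$, $\sfL$ and $\sfLd$.
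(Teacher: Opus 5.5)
Your strategy works, but one claim is false as stated, and the bookkeeping has to be done on labelled terms rather than on tuples. You assert that each resulting tuple has coefficient exactly $1$ on the left. This fails as soon as some parts equal $q-1$. For $\fs=(q-1,q-1)$,
\[
\sfL\sfZ\fs=3\,(q-1,q-1,q-1)+2\,(2q-2,q-1)+2\,(q-1,2q-2)+(3q-3).
\]
Inserting $q-1$ into any of the three gaps of $(q-1,q-1)$ gives the same tuple. Also, $(2q-2,q-1)$ arises both as a sticky term of the coarsening $(q-1,q-1)$ and as a standalone term of the coarsening $(2q-2)$. The same coincidences occur on the right, since different insertion positions $i$ can give the same word.

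The repair is the encoding you sketch at the end, used consistently. Let $X$ be the set of left-hand data $(\ft,\text{gap})$ and $(\ft,B)$. Let $Y$ be the set of right-hand data, of two kinds:
\begin{itemize}
\item an insertion position $i\in\{0,\ldots,k\}$ with a comma/plus word on the $k$ internal positions of the inserted word (not $k+1$; it has $k+1$ entries), with sign $+1$;
\item a sticky position $i\in[k]$ with a word on the $k-1$ internal positions, with sign $-1$.
\end{itemize}
Let $\tau$ denote the tuple produced by a datum. Your classification defines a map $\phi\colon Y\to X$ with $\tau(\phi(y))=\tau(y)$. Your counts say exactly that every fibre of $\phi$ has signed size $1$. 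Over each standalone datum there is one positive element. Over each datum $(\ft,B)$, with $B$ consisting of $s_j,\ldots,s_l$, there are $l-j+2$ positive and $l-j+1$ negative elements. Hence both sides equal $\sum_{x\in X}\tau(x)$, whatever coincidences occur among the tuples. With this rephrasing the proof is complete.

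This is a genuinely different route from the paper's. The paper argues by induction on depth. It uses $\sfZ(\fa,\fb)=(\sfZ\fa,\sfZ\fb)+\sfZ(\fa\boxplus\fb)$ and introduces $\sfE_a\fs=\mathbbm{1}_{\fs\neq\emptyset}\cdot\fs^{+a}+(\fs,a)$, with $\sfZ(\bS,a)=\sfE_a\sfZ\bS$ and $\sfL\sfE_a\bS=(\sfE_a\bS,q-1)+\sfE_a\sfL\bS$. It then peels off the last entry via $\sfLd(\ft,a)=(\sfLd\ft,a)-(\ft,a+q-1)+(\ft,a,q-1)$. Each step there is a short local check, and the auxiliary identities are reused later (in Lemmas \ref{lem_J} and \ref{lem_last}).

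Your argument is non-inductive and explains the minus sign in $\sfLd$. A part $q-1$ glued into a block of $m$ original entries is produced $m+1$ times by the shuffle part but only $m$ times by the sticky part, and the subtraction removes exactly this overcount. The price is the global sign-counting, which is only valid once phrased on index data as above.
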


\begin{proof}
In a coarsening of $(\fa,\fb)$, the comma between $\fa$ and $\fb$ is either retained or replaced by the plus symbol.
Therefore, for any $\fa,\fb \in \cI$, we have
\begin{equation}\label{eqn_Z(a,b)}
\sfZ(\fa,\fb) = (\sfZ\fa,\sfZ\fb) + \sfZ(\fa\boxplus\fb).
\end{equation}

For any $a \in \bbZ_{+}$, define a linear operator $\sfE_a$: $\mathscr{S}^{(\bbF_q)} \rightarrow \mathscr{S}^{(\bbF_q)}$ by setting for each $\fs \in \cI$ that
\[
\sfE_a\fs \coloneq \mathbbm{1}_{\fs \neq \emptyset} \cdot \fs^{+a} + (\fs,a).
\]
Note that $\sfE_1 = \sfE$.
We claim that, for any formal sum $\bS \in \mathscr{S}^{(\bbF_q)}$, we have
\begin{align}
\sfZ(\bS,a) &= \sfE_a\sfZ\bS,  \label{eqn_Ea1} \\
\sfL\sfE_a\bS &= (\sfE_a\bS,q-1) + \sfE_a\sfL\bS. \label{eqn_Ea2}
\end{align}
It suffices to verify \eqref{eqn_Ea1} and \eqref{eqn_Ea2} for the case $\bS=\fs \in \cI$.
If $\fs=\emptyset$, then the verifications are straightforward. 
Now, we prove \eqref{eqn_Ea1} for the case $\bS = \fs \in \cI_{>0}$.
By \eqref{eqn_Z(a,b)}, we have
\[
\sfZ(\fs,a) = \Big(\sfZ\fs, a\Big) + \sfZ(\fs^{+a}).
\]
By the definition of $\operatorname{Coars}(\fs)$ (see Definition \ref{def_Z_operator}), we have $\left\{ \ft^{+a} \mid \ft \in \operatorname{Coars}(\fs) \right\} = \operatorname{Coars}(\fs^{+a})$.
Therefore, 
\[
\sfZ(\fs^{+a}) = (\sfZ\fs)^{+a}.
\]
On the other hand, by definition of $\sfE_a$, we have (since $\fs \in \cI_{>0}$ and $\sfZ\fs \in \mathscr{S}_{>0}^{(\bbF_q)}$)
\[
\sfE_a\sfZ\fs = (\sfZ\fs)^{+a} + (\sfZ\fs,a).
\]
In conclusion, the proof of \eqref{eqn_Ea1} is complete.
It remains to prove \eqref{eqn_Ea2} for the case $\bS=\fs \in \cI_{>0}$.
Assume $\fs=(\ft,b)$ where $\ft \in \cI$ and $b \in \bbZ_+$.
Then
\begin{align}
\sfL\sfE_a(\ft,b) =&~ \sfL\Big( (\ft,b+a) + (\ft,b,a) \Big) \notag\\
=&~ (\sfL\ft,b+a) + (\ft,b+a+q-1) + (\ft,b+a,q-1)  \notag\\
&+~ (\sfL\ft,b,a) +(\ft,b+q-1,a) + (\ft,b,q-1,a) + (\ft,b,a+q-1) + (\ft,b,a,q-1). \label{eqn_LEa}
\end{align}
On the other hand, 
\begin{align}
(\sfE_a(\ft,b),q-1) =&~ (\ft,b+a,q-1) + (\ft,b,a,q-1), \label{eqn_LEa1}\\
\sfE_a\sfL(\ft,b) =&~ \sfE_a\Big( (\sfL\ft,b) +(\ft,b+q-1) + (\ft,b,q-1) \Big) \notag\\
=&~ (\sfL\ft,b+a) + (\sfL\ft,b,a) + (\ft,b+a+q-1) + (\ft,b+q-1,a)  \notag\\ 
&~ +(\ft,b,a+q-1) +(\ft,b,q-1,a). \label{eqn_LEa2}
\end{align}
Comparing the sum of \eqref{eqn_LEa1} and \eqref{eqn_LEa2} with \eqref{eqn_LEa}, we complete the proof of \eqref{eqn_Ea2}.

Finally, we prove \eqref{eqn_LZ=ZLd}.
It suffices to prove that $\sfL\sfZ\fs = \sfZ\sfLd\fs$ for any $\fs \in \cI$.
We prove by induction on $\dep(\fs)$.
For $\dep(\fs) \le 1$, this is readily checkable.
Now, suppose $\dep(\fs) \ge 2$ and write $\fs = (\ft,a)$ for some $a \in \bbZ_{+}$ and $\ft \in \cI_{>0}$.
By definition of $\sfLd$ and \eqref{eqn_Z(a,b)}, we have
\begin{equation}\label{eqn_8.1}
\sfLd(\ft,a) = (\sfLd\ft,a) - (\ft,a+q-1) + (\ft,a,q-1).
\end{equation}
Applying the operator $\sfZ$ to \eqref{eqn_8.1}, and noting that $\sfZ(\sfLd\ft,a) = \sfE_a\sfZ\sfLd\ft$ by \eqref{eqn_Ea1} and 
\[
\sfZ \Big(- (\ft,a+q-1) + (\ft,a,q-1)\Big) = (\sfZ(\ft,a),q-1) = (\sfE_a\sfZ\ft,q-1)
\]
first by \eqref{eqn_Z(a,b)} and then by \eqref{eqn_Ea1}, we obtain
\begin{equation}\label{eqn_8.2}
\sfZ\sfLd(\ft,a) = \sfE_a\sfZ\sfLd\ft + (\sfE_a\sfZ\ft,q-1).
\end{equation}
On the other hand, we have
\begin{equation}\label{eqn_8.3}
\sfL\sfZ(\ft,a) = \sfL\sfE_a\sfZ\ft = (\sfE_a\sfZ\ft,q-1) + \sfE_a\sfL\sfZ\ft
\end{equation}
first by \eqref{eqn_Ea1} and then by \eqref{eqn_Ea2}.
Now, comparing \eqref{eqn_8.2} with \eqref{eqn_8.3}, and using the induction hypothesis $\sfL\sfZ\ft = \sfZ\sfLd\ft$, we complete the proof of \eqref{eqn_LZ=ZLd}.
\end{proof}

We are ready to construct a new basis of $\mathscr{S}^{(\bbF_q)}$.

\begin{definition}\label{def_N}
Define
\[
\cN \coloneq \{\emptyset\} \bigcup \bbZ_+ \bigcup \left\{ (h,\fm,t) ~\Big|~ h \in \bbZ_+, \fm \in [q]^{\bullet}, t \in [q-1]  \right\} \subset \cI,
\]
and define
\[
\cJ_w \coloneq \left\{ \sfZ\fb ~\Big|~ \fb \in \cN, \wt(\fb)=w  \right\} \bigcup \left\{ (\sfE\sfZ\fa,\sfL\sfZ\fb) ~\Big|~ \fa \in \cI_{>0},\fb \in \cN, \wt(\fa)+\wt(\fb)+q=w \right\}.
\]
\end{definition}

\begin{lemma}\label{lem_J}
$\cJ_w$ is a basis of the $\bbF_q$-linear space $\Sw$.
\end{lemma}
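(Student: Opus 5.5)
Since $\sfZ$ is an invertible operator on each $\Sw$ (Lemma \ref{lem_Z}), I will transport the whole problem through $\sfZ^{-1}$. Combining \eqref{eqn_Ea1} with $a=1$ (so that $\sfE\sfZ\fa=\sfZ(\fa,1)$) and Lemma \ref{lem_Ld} (so that $\sfL\sfZ\fb=\sfZ\sfLd\fb$) with \eqref{eqn_Z(a,b)}, I aim to show
\[
\sfZ^{-1}(\sfE\sfZ\fa,\sfL\sfZ\fb) \;=\; \big((\fa,1),\sfLd\fb\big) + (\fa,1)\boxplus\sfLd\fb + \text{(correction)},
\]
or, more usefully, simply that this element is congruent to a single tuple modulo strictly ``higher'' tuples in a suitable order. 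Then $\cJ_w$ is a basis if and only if the set $\sfZ^{-1}\cJ_w=\{\fb\}\cup\{\sfZ^{-1}(\sfE\sfZ\fa,\sfL\sfZ\fb)\}$ is a basis. The plan has two steps. First, a counting check: I will show $\#\cJ_w=2^{w-1}=\dim\Sw$. Second, a triangularity argument: I will show that the set spans, or equivalently that it is linearly independent, via distinct leading tuples as in Lemma \ref{lem_min_tuple_imply_dim_lower_bound}, possibly with a different order than $\prec$.

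For the counting, let $N(x)=\sum_{\fb\in\cN}x^{\wt\fb}$. Then
\[
N(x)=1+\frac{x}{1-x}+\frac{x}{1-x}\cdot\frac{1}{1-x-\cdots-x^q}\cdot\frac{x-x^q}{1-x}.
\]
The generating function of $\cJ$ is $N(x)\big(1+\tfrac{x}{1-2x}\cdot x^q\big)$, once the elements are known to be distinct, which follows from the independence step. I expect the algebra to collapse to $\tfrac{1-x}{1-2x}$, which is the generating function of all tuples, including $\emptyset$. This parallels the computations in Lemmas \ref{lem_dimUw} and \ref{lem_cardM}.

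For independence, I will assign a leading tuple to each element. For $\fb\in\cN$ the leading tuple of $\sfZ\fb$ is $\fb$ itself, its finest coarsening. For $(\sfE\sfZ\fa,\sfL\sfZ\fb)$ I will use the order that compares depth first (larger depth is ``leading''), then lex. Under this order the maximal-depth term of $\sfZ\fa$ is $\fa$, that of $\sfE$ applied to it is $(\fa,1)$, and that of $\sfL\sfZ\fb$ is $(q-1)\shuffle\fb$. Its lex-extremal piece is $(q-1,\fb)$ if $\fb\neq\emptyset$, or $\fb$ followed by $q-1$, depending on the lex direction chosen. So the leading tuple is $(\fa,1,q-1,\fb)$ or $(\fa,1,\fb,q-1)$.

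The main obstacle is to pick the convention, possibly using the reverse-lex order $\triangleleft$ from the notation list, so that these leading tuples are pairwise distinct and also distinct from every $\fb\in\cN$. The shape of $\cN$ is designed for this: its elements $(h,\fm,t)$ have $\fm\in[q]^\bullet$ and $t\le q-1$, and $h$ is free. Reading a tuple from the right, one should be able to decode uniquely whether it has the form $\fb\in\cN$, or the form ``$\fa$, then a $1$, then a $(q-1)$-insertion into some $\fb\in\cN$''. I would try to define the leading tuple as the maximal tuple under $\triangleleft$ restricted to maximal depth. I would then verify by a case split on whether $\fb$ is $\emptyset$, a single integer, or of the form $(h,\fm,t)$ that the map to leading tuples is injective.

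If that decoding fails, the fallback is different: prove spanning directly by induction. The idea is to show every tuple $\fs\notin\cN$ can be written as $(\fa,1)$-type leading term plus lower terms. This would use the recursion \eqref{eqn_8.1} for $\sfLd$ to peel off a trailing $q-1$, or the first entry exceeding $q$, and the count then gives independence.
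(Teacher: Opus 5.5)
Your reduction to $\sfZ^{-1}\cJ_w$ and the count $N(x)\bigl(1+\tfrac{x^{q+1}}{1-2x}\bigr)=\tfrac{1-x}{1-2x}$ are correct. The independence step, which is the real content of the lemma, fails as designed: no order in which \emph{larger} depth is leading can separate the two families. Already for $w=q+1$ the second family consists of the single element $(\sfE\sfZ(1),\sfL\sfZ\emptyset)=(2,q-1)+(1,1,q-1)$. Meanwhile $(1,1,q-1)\in\cN$ (take $h=1$, $\fm=(1)$, $t=q-1$), and $\sfZ(1,1,q-1)=(1,1,q-1)+(2,q-1)+(1,q)+(q+1)$. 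Both elements have $(1,1,q-1)$ as their unique term of maximal depth, so they collide under any tie-breaking rule. This is not an accident of tie-breaking. The top-depth part of $(\sfE\sfZ\fa,\sfL\sfZ\fb)$ is $(\fa,1,(q-1)\shuffle\fb)$, whose tuples have small entries and often lie in $\cN$ themselves, whereas tuples outside $\cN$ such as $(1,q)$ have no such shape. The step has further problems. The lex-extremal insertion in $(q-1)\shuffle\fb$ is not always $(q-1,\fb)$ or $(\fb,q-1)$; it depends on how the $b_i$ compare with $q-1$. Shuffle multiplicities can also vanish in characteristic $p$ (e.g.\ $(2)\shuffle(2,2)=3\cdot(2,2,2)=\bzero$ for $q=3$), so the top-depth part can disappear altogether. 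Finally, your displayed formula has the wrong sign and needs no correction term, and the fallback ``spanning by induction'' is not yet an argument.

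The missing idea, which is how the paper proceeds, is to work at the opposite end: compute $\sfZ^{-1}$ exactly and take the \emph{minimal}-depth term. By \eqref{eqn_Z(a,b)} and \eqref{eqn_Ea1} with $a=1$, one has $\sfZ^{-1}(\sfE\sfZ\fa,\fs)=(\fa,1,\sfZ^{-1}\fs)-(\fa,{}^{+}(\sfZ^{-1}\fs))$. Combined with Lemma \ref{lem_Ld}, this gives $\sfZ^{-1}(\sfE\sfZ\fa,\sfL\sfZ\fb)=(\fa,1,\sfLd\fb)-(\fa,{}^{+}\sfLd\fb)$. The terms of least depth come from the sticky part $-(q-1)\widetilde{*}\fb$ of $\sfLd\fb$ in the second summand (or are $-(\fa,q)$ when $\fb=\emptyset$). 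With respect to $\triangleleft$ (depth first, then entries compared from right to left), the minimal one is $(\fa,{}^{q+}\fb)$, with coefficient $\pm1$, because adding $q-1$ to any later entry of $\fb$ makes the tuple larger when read from the right. The tuples $(\fa,{}^{q+}\fb)$ with $\fa\in\cI_{>0}$, $\fb\in\cN$ are exactly $\cI\setminus\cN$, each arising from a unique pair. If the last entry is $\ge q$, then ${}^{q+}\fb$ is that last entry; otherwise ${}^{q+}\fb$ starts at the rightmost entry exceeding $q$ strictly between the first and last positions. Since $\sfZ^{-1}\sfZ\fb=\fb$ for $\fb\in\cN$, the map $s_{\triangleleft}$ sends $\sfZ^{-1}\cJ_w$ bijectively onto $\cI_w$. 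The transition matrix is therefore triangular with diagonal entries $\pm1$, which gives independence and the count at once, with no characteristic-$p$ cancellation issues. In other words, $\cN$ is built as the complement of these coarse leading tuples, not of fine ones.
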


\begin{proof}
We first claim that 
\begin{equation}\label{eqn_8.10}
\sfZ^{-1}(\sfE\sfZ\fa,\fs) = (\fa,1,\sfZ^{-1}\fs)  -(\fa,{}^{+}(\sfZ^{-1}\fs))
\end{equation}
for any $\fa,\fs \in \cI_{>0}$.
Indeed, by \eqref{eqn_Z(a,b)}, we have
\begin{equation}\label{eqn_8.11}
\sfZ\Big( (\fa,1,\sfZ^{-1}\fs)  -(\fa,{}^{+}(\sfZ^{-1}\fs)) \Big) = \left( \sfZ(\fa,1),\fs \right).
\end{equation}
By the special case $a=1$ of \eqref{eqn_Ea1}, we have $\sfZ(\fa,1)=\sfE\sfZ\fa$. 
Substituting it into \eqref{eqn_8.11}, we obtain
\begin{equation}\label{eqn_8.12}
\sfZ\Big( (\fa,1,\sfZ^{-1}\fs)  -(\fa,{}^{+}(\sfZ^{-1}\fs)) \Big) = \left( \sfE\sfZ\fa,\fs \right).
\end{equation}
Applying the operator $\sfZ^{-1}$ to \eqref{eqn_8.12}, we obtain the desired \eqref{eqn_8.10}.

Therefore, by \eqref{eqn_8.10} and Lemma \ref{lem_Ld}, we have
\begin{align*}
\sfZ^{-1}\cJ_w &= \left\{ \fb ~\Big|~ \fb \in \cN\cap\cI_w \right\} \bigcup \left\{ \sfZ^{-1}(\sfE\sfZ\fa,\sfL\sfZ\fb) ~\Big|~ \fa \in \cI_{>0},\fb \in \cN,\wt(\fa)+\wt(\fb)+q=w \right\} \\
&= \left\{ \fb ~\Big|~ \fb \in \cN\cap\cI_w  \right\} \bigcup \left\{ (\fa,1,\sfLd\fb) - (\fa,{}^{+}(\sfLd\fb)) ~\Big|~ \fa \in \cI_{>0},\fb \in \cN, \wt(\fa)+\wt(\fb)+q=w \right\}.
\end{align*}
Define the (\text{depth}, $\overleftarrow{\text{lex}})$-order $\triangleleft$ on $\cI$: For any $\fs,\ft \in \cI$, say $\fs \triangleleft \ft$ if and only if
\begin{itemize}
\item $\dep(\fs)<\dep(\ft)$; or
\item $\dep(\fs)=\dep(\ft)$ and $\fs$ is smaller than $\ft$ in the reverse lexicographic order (compare entries from right to left).
\end{itemize}
For any $\bS \in \Sw \setminus \{\bzero\}$, define $s_{\triangleleft}(\bS)$ to be the minimal tuple appearing in $\bS$ with respect to the (\text{depth},  $\overleftarrow{\text{lex}})$-order $\triangleleft$.
By the definition of $\sfLd$, for any $\fa \in \cI_{>0}$ and $\fb \in \cN$, we have
\[
s_{\triangleleft}\Big( (\fa,1,\sfLd\fb) - (\fa,{}^{+}(\sfLd\fb)) \Big) = (\fa,{}^{q+}\fb).
\]
Clearly, for different ordered pairs of $\fa \in \cI_{>0}$ and $\fb \in \cN$, the resulting tuples $(\fa,{}^{q+}\fb)$ are distinct (by comparing the components from right to left).
Note that 
\begin{align*}
&~ \left\{ (\fa,{}^{q+}\fb) ~\Big|~ \fa\in \cI_{>0}, \fb \in \cN \right\} \\
=&~ \left\{ (\fa,t) ~\Big|~ \fa \in \cI_{>0}, t \ge q \right\} \bigcup \left\{ (\fa,h,\fm,t) ~\Big|~ \fa \in \cI_{>0}, h \ge q+1, \fm \in [q]^{\bullet}, t \in [q-1] \right\}  \\
=&~ \cI \setminus \cN.
\end{align*}
It implies that if $\bS$ runs through $\sfZ^{-1}\cJ_w$, then $s_{\triangleleft}(\bS)$ runs through $\cI_w$.
Therefore, the transform matrix from $\cI_w$ to $\sfZ^{-1}\cJ_w$ is triangular with diagonal entries in $\bbF_q^{\times}$ (actually in $\{\pm 1\}$).
Thus, $\sfZ^{-1}\cJ_w$ is a basis of $\Sw$ and hence $\cJ_w$ is a basis, too. 
\end{proof}

Now, we prove that $\dim_{\bbF_q} \Qw \leqslant 2^{w-1} - d_w$ for any weight $w \ge 1$.

\begin{proposition}\label{prop_other_half}
We have 
\begin{equation}\label{eqn_Qprime}
\Qw = \Span_{\bbF_q}\left\{  \bQ\left(\fh,\boxed{{}^{q+}\fm},\boxed{{}^{q+}\sfZ\fb}\right) ~\Big|~ \fh \in [q]^{\bullet}, \fm \in \cI_{>0}, \fb \in \cN, \wt(\fh,{}^{q+}\fm,{}^{q+}\fb)=w \right\}.
\end{equation}
In particular, for any weight $w \ge 1$, we have
\[
\dim_{\bbF_q} \Qw \leqslant 2^{w-1} - d_w.
\]
\end{proposition}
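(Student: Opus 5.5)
By \eqref{eqn_carnoQw} of Lemma \ref{lem_ini_nor}, it suffices to show that each $\mathscr{Q}_w^{\fh,\fm}$ (with $\fh \in [q]^{\bullet}$, $\fm \in \cI_{>0}$) is contained in the right-hand side of \eqref{eqn_Qprime}, modulo generators with strictly larger prefix $(\fh,{}^{q+}\fm)$. The proof then runs by descending induction on $(\fh,{}^{q+}\fm)$ in the well-order $\prec$; there are finitely many such prefixes in weight $w$, so the induction terminates.

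\textbf{Step 1 (expand the third argument).} Fix $\fh,\fm$. The third argument $\ft$ of a generator $\bQ(\fh,\boxed{{}^{q+}\fm},\boxed{{}^{q+}\ft})$ ranges over all of $\Span_{\bbF_q}\cI$ of the appropriate weight. By Lemma \ref{lem_J}, I expand $\ft$ in the basis $\cJ$. This produces two kinds of terms:
\begin{itemize}
\item terms $\sfZ\fb$ with $\fb\in\cN$, which already have the form required in \eqref{eqn_Qprime};
\item terms $(\sfE\sfZ\fa,\sfL\sfZ\fb)$ with $\fa\in\cI_{>0}$ and $\fb\in\cN$, which must be eliminated.
\end{itemize}

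\textbf{Step 2 (eliminate the bad terms).} For the second kind I apply the six-family-$\bQ$ identity (Lemma \ref{lem_6Q}) with $\bH=\fh$, $\bM=\fm$, $\bA=\sfZ\fa$, $\bB=\sfZ\fb$. Its first term is exactly $\bQ(\fh,\boxed{{}^{q+}\fm},\boxed{{}^{q+}(\sfE\sfZ\fa,\sfL\sfZ\fb)})$, so the identity expresses this term through the other five:
\begin{itemize}
\item $\bQ(\fh,\boxed{{}^{q+}(\sfE\fm,\sfZ\fa)},\boxed{{}^{q+}\sfL\sfZ\fb})$ and $\bQ(\fh,\boxed{{}^{q+}(\sfE\fm,\sfL\sfZ\fa)},\boxed{{}^{q+}\sfZ\fb})$. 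Here the head is still $\fh$, but the middle block is longer, so $(\fh,{}^{q+}\fm')\succ(\fh,{}^{q+}\fm)$ for every tuple $\fm'$ occurring. Also $\sfL\sfZ\fb=\sfZ\sfLd\fb$ by Lemma \ref{lem_Ld}.
\item The three terms with head $\sfE\fh$ followed by $\fm$ or $\sfL\fm$. After rewriting these with Lemma \ref{lem_ini_nor}, they fall into $\mathscr{Q}_w^{\fh',\fm'}$ with $(\fh',{}^{q+}\fm')\succcurlyeq(\sfE\fh,\ldots)\succ(\fh,{}^{q+}\fm)$, because the head has changed from $\fh$ to something of greater depth, or equal depth and larger lex order. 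In particular, $(\fh^{+},\ldots)$ and $(\fh,1,\ldots)$ both dominate $(\fh,q+m_1,\ldots)$ in $\prec$: the first through the final entry of $\fh$ once depths match, the second through depth.
\end{itemize}
All five terms therefore lie in strictly higher strata and are handled by the induction hypothesis.

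\textbf{Main obstacle.} The delicate part is the order bookkeeping. I need to confirm that every term produced has a strictly larger prefix, uniformly including the boundary cases $\fh=\emptyset$ and the terms of $\sfE\fm$ and $\sfL\fm$ whose depth equals that of $\fm$. If comparing $(\fh,{}^{q+}\fm)$ alone does not suffice, I would refine the induction to the lexicographic pair of (prefix, depth of the middle block), or induct on the weight of $\ft$ within a fixed prefix.

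\textbf{Step 3 (counting).} Once \eqref{eqn_Qprime} holds, the dimension bound follows by counting triples $(\fh,\fm,\fb)$ with $\fh\in[q]^{\bullet}$, $\fm\in\cI_{>0}$, $\fb\in\cN$. The generating functions are:
\begin{itemize}
\item $\fh\in[q]^{\bullet}$: $\frac{1-x}{1-2x+x^{q+1}}$;
\item the factor $x^{q}\cdot\frac{x}{1-2x}$ for ${}^{q+}\fm$;
\item the factor $x^{q}$ for ${}^{q+}$ applied to $\fb$;
\item $\fb\in\cN$: $1+\frac{x}{1-x}+\frac{x}{1-x}\cdot\frac{1-x}{1-2x+x^{q+1}}\cdot\frac{x-x^{q}}{1-x}$.
\end{itemize}
The product of these should equal $\frac{x}{1-2x}-\sum_w d_wx^w$, which is the generating function appearing in Lemma \ref{lem_cardM}. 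I would verify this rational-function identity directly.
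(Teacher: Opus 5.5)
Your proposal is correct and follows essentially the same route as the paper. You reduce via \eqref{eqn_carnoQw} to the strata $\mathscr{Q}_w^{\fh,\fm}$ and run a reverse induction on $(\fh,{}^{q+}\fm)$ under $\prec$. You then expand the third argument in the basis $\cJ_w$ of Lemma~\ref{lem_J} and eliminate the terms $(\sfE\sfZ\fa,\sfL\sfZ\fb)$ with the six-family-$\bQ$ identity: the first two resulting terms have a strictly deeper middle block since $\fa\in\cI_{>0}$, and the last three are handled by \eqref{eqn_carno_by_cube}. Finally you count triples.

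The order bookkeeping you flag as the main obstacle is already settled by your own depth/lex comparisons, so no refined induction is needed. Your remark $\sfL\sfZ\fb=\sfZ\sfLd\fb$ is superfluous, because the induction hypothesis covers all of $\mathscr{Q}_w^{\fh,\fm'}$ whatever the third argument is.
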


\begin{proof}
Denote the space on the right-hand side of \eqref{eqn_Qprime} by $\mathscr{Q}_w^{\prime}$. 
We first prove \eqref{eqn_Qprime}.
By \eqref{eqn_carnoQw}, it suffices to prove that
\[
\mathscr{Q}_w^{\fh,\fm} \subset \mathscr{Q}_w^{\prime}
\]
for any $\fh \in [q]^{\bullet}$ and $\fm \in \cI_{>0}$.
Recall the space $\mathscr{Q}_w^{\fh,\fm}$ is defined in Definition \ref{def_Qg}.
If $\wt(\fh) + \wt(\fm) > w-2q$, then $\mathscr{Q}_w^{\fh,\fm} = \Span_{\bbF_q} \emptyset = \{\bzero\}$; there is nothing to prove.
We may assume $w > 2q$.
In the following, we prove that $\mathscr{Q}_w^{\fh,\fm} \subset \mathscr{Q}_w^{\prime}$ by a reverse induction on $(\fh,{}^{q+}\fm) \in \mathcal{G}$ (with respect to the (depth, lex)-order $\prec$), where $\mathcal{G}$ is the finite set
\[
\mathcal{G} = \left\{ (\fh,{}^{q+}\fm) ~\Big|~  \fh \in [q]^{\bullet}, \fm \in \cI_{>0}, \wt(\fh) +\wt(\fm) \le w-2q \right\} \subset \cI.
\]

If $(\fh,{}^{q+}\fm) = (q+1,\{1\}^{w-2q-1})$ is the greatest tuple in $\mathcal{G}$, then $\fh = \emptyset$, $\fm=\{1\}^{w-2q}$, and
\[
\mathscr{Q}_w^{\fh,\fm} = \Span_{\bbF_q} \left\{ \bQ\left(\fh,\boxed{{}^{q+}\fm},\boxed{{}^{q+}\emptyset}\right)   \right\} \subset \mathscr{Q}_w^{\prime}.
\]
Now, assume we have proved that $\mathscr{Q}_w^{\fh^\prime,\fm^\prime} \subset \mathscr{Q}_w^{\prime}$
for any $\fh^\prime \in [q]^{\bullet}$ and $\fm^{\prime} \in \cI_{>0}$ such that $(\fh^\prime,{}^{q+}\fm^\prime) \succ (\fh,{}^{q+}\fm)$.
To prove that $\mathscr{Q}_w^{\fh,\fm} \subset \mathscr{Q}_w^{\prime}$, by the definition of $\mathscr{Q}_w^{\fh,\fm}$, it suffices to prove that
\[
\bQ\left( \fh, \boxed{{}^{q+}\fm},\boxed{{}^{q+}\ft} \right) \in \mathscr{Q}_w^{\prime}
\]
for any $\ft \in \cI$ such that $\wt(\fh,{}^{q+}\fm,{}^{q+}\ft)=w$.

By Lemma \ref{lem_J}, there exist $c_{\fb}, c_{\fa,\fb} \in \bbF_q$ such that
\[
\ft = \sum_{\fb \in \cN, \wt(\fb)=\wt(\ft)} c_{\fb}\cdot \sfZ\fb + \sum_{\substack{\fa\in\cI_{>0},\fb\in\cN \\ \wt(\fa)+\wt(\fb)+q=\wt(\ft)}} c_{\fa,\fb} \cdot (\sfE\sfZ\fa,\sfL\sfZ\fb).
\]
By definition, we have 
\[
\bQ\left( \fh, \boxed{{}^{q+}\fm},\boxed{{}^{q+}\sfZ\fb} \right) \in \mathscr{Q}_w^{\prime}.
\]
It remains to prove that
\[
\bQ\left( \fh, \boxed{{}^{q+}\fm},\boxed{{}^{q+}(\sfE\sfZ\fa,\sfL\sfZ\fb)} \right) \in \mathscr{Q}_w^{\prime}
\]
for any $\fa\in\cI_{>0}$, $\fb\in\cN$ with $\wt(\fa)+\wt(\fb)+q=\wt(\ft)$.

By Lemma \ref{lem_6Q}, we have
\begin{align}
&\bQ\left( \fh, \boxed{{}^{q+}\fm}, \boxed{{}^{q+}(\sfE\sfZ\fa,\sfL\sfZ\fb)} \right) = \bQ\left( \fh, \boxed{{}^{q+}(\sfE\fm,\sfZ\fa)},\boxed{{}^{q+}\sfL\sfZ\fb} \right) +\bQ\left( \fh,\boxed{{}^{q+}(\sfE\fm,\sfL\sfZ\fa)},\boxed{{}^{q+}\sfZ\fb} \right) \notag\\
&- \bQ\left( \sfE\fh,\fm,\boxed{{}^{q+}\sfZ\fa},\boxed{{}^{q+}\sfL\sfZ\fb} \right) -\bQ\left( \sfE\fh,\fm,\boxed{{}^{q+}\sfL\sfZ\fa},\boxed{{}^{q+}\sfZ\fb} \right) -\bQ\left( \sfE\fh,\sfL\fm,\boxed{{}^{q+}\sfZ\fa},\boxed{{}^{q+}\sfZ\fb} \right). \label{eqn_last6Q}
\end{align}

Since $\fh \in [q]^{\bullet}$ and every tuple appearing in $(\fh,{}^{q+}(\sfE\fm,\sfZ\fa))$ and $(\fh,{}^{q+}(\sfE\fm,\sfL\sfZ\fa))$ is $\succ (\fh,{}^{q+}\fm)$, by Definition \ref{def_Qg},
\begin{align*}
\bQ\left( \fh, \boxed{{}^{q+}(\sfE\fm,\sfZ\fa)},\boxed{{}^{q+}\sfL\sfZ\fb} \right) &\in \sum_{\fm^\prime \in \cI_{>0}, \fm^\prime \succ \fm} \mathscr{Q}_w^{\fh,\fm^\prime}, \\
\bQ\left( \fh,\boxed{{}^{q+}(\sfE\fm,\sfL\sfZ\fa)},\boxed{{}^{q+}\sfZ\fb} \right) &\in \sum_{\fm^\prime \in \cI_{>0}, \fm^\prime \succ \fm} \mathscr{Q}_w^{\fh,\fm^\prime}.
\end{align*}
Therefore, by induction hypothesis, the first two terms on the right-hand side of \eqref{eqn_last6Q} both belong to $\mathscr{Q}_w^{\prime}$. 

Since every tuple appearing in $(\sfE\fh,\fm)$ and $(\sfE\fh,\sfL\fm)$ is $\succ (\fh,{}^{q+}\fm)$, by \eqref{eqn_carno_by_cube} of Lemma \ref{lem_ini_nor}, the last three terms on the right-hand side of \eqref{eqn_last6Q} all belong to
\[
\sum_{\substack{\fh^{\prime} \in [q]^{\bullet}, \fm^\prime \in \cI_{>0} \\ (\fh^\prime,{}^{q+}\fm^\prime) \succ (\fh,{}^{q+}\fm)}} \mathscr{Q}_w^{\fh^\prime,\fm^\prime},
\]
and therefore belong to $\mathscr{Q}_w^{\prime}$ by induction hypothesis.

In conclusion, every term on the right-hand side of \eqref{eqn_last6Q} belongs to $\mathscr{Q}_w^{\prime}$.
Therefore, by \eqref{eqn_last6Q}, we have
\[
\bQ\left( \fh, \boxed{{}^{q+}\fm},\boxed{{}^{q+}(\sfE\sfZ\fa,\sfL\sfZ\fb)} \right) \in \mathscr{Q}_w^{\prime}
\]
as desired, which completes the reverse induction on $(\fh,{}^{q+}\fm)$. 
The proof of \eqref{eqn_Qprime} is now complete.

Finally, we prove the upper bound on $\dim_{\bbF_q} \Qw$. 
By \eqref{eqn_Qprime}, we have
\[
\dim_{\bbF_q} \Qw \le N_w \coloneq \#\left\{  \bQ\left(\fh,\boxed{{}^{q+}\fm},\boxed{{}^{q+}\sfZ\fb}\right) ~\Big|~ \fh \in [q]^{\bullet}, \fm \in \cI_{>0}, \fb \in \cN, \wt(\fh,{}^{q+}\fm,{}^{q+}\fb)=w \right\}.
\]
Clearly,
\[
N_w \le [x^{w}]\Big( \sum_{\fh \in [q]^{\bullet}} x^{\wt(\fh)} \cdot \sum_{\fm \in \cI_{>0}} x^{q+\wt(\fm)} \cdot \sum_{\fb \in \cN} x^{q+\wt(\fb)} \Big).
\]
Since 
\begin{align*}
\sum_{\fh \in [q]^{\bullet}} x^{\wt(\fh)} &= \frac{1}{1-x-x^2-\cdots-x^{q}} = \frac{1-x}{1-2x+x^{q+1}}, \\
\sum_{\fm \in \cI_{>0}} x^{\wt(\fm)} &= \frac{x}{1-2x}, \\
\sum_{\fb \in \cN} x^{\wt(\fb)} &= \frac{1}{1-x} + \frac{x}{1-x}\cdot\frac{1}{1-x-x^2-\cdots-x^{q}}\cdot\frac{x-x^q}{1-x} = \frac{1-x}{1-2x+x^{q+1}},
\end{align*}
we obtain
\begin{align*}
\dim_{\bbF_q} \Qw &\le [x^{w}] \frac{x^{2q+1}(1-x)^2}{(1-2x)(1-2x+x^{q+1})^2} \\
&= [x^w]\left( \frac{x}{1-2x} - \frac{x(1-x^{q})(1-2x+x^q)}{(1-2x+x^{q+1})^2} \right) =2^{w-1} - d_w.
\end{align*}
The proof of Proposition \ref{prop_other_half} is complete.
\end{proof}

\section{The proof of Theorem \ref{thm_Z}}\label{sec_9}

In this final section, we prove Theorem \ref{thm_Z}.
We separate the proof into two parts. 
The first part is achieved by the authors.
The second part (an elegant basis) is contributed by ChatGPT (GPT-5.6 Sol, OpenAI). 

\subsection{The dimension formula of $\cZ_{w}^{(\bbF_q)}$ and the first basis}

\begin{theorem}\label{thm_Z1}
For any weight $w \ge 1$, we have 
\[
\dim_{\bbF_q} \cZ_w^{(\bbF_q)} = d_w.
\]
Moreover, $\left\{ \LiA(\fs) ~\big|~ \fs \in \cB_w^{\prime} \right\}$ is an $\bbF_q$-basis of $\cZ_w^{(\bbF_q)}$, where $\cB_w^{\prime} = \cI_w \setminus \cM$, and $\cM$ is explicitly defined in Definition \ref{def_M}.
\end{theorem}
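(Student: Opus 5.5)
Combining the results already established gives the dimension count directly. By Theorem \ref{thm_R2}, $\Rw=\Qw$. Propositions \ref{prop_half} and \ref{prop_other_half} give the two inequalities, so $\dim_{\bbF_q}\Rw=\dim_{\bbF_q}\Qw=2^{w-1}-d_w$.

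Next consider the $\bbF_q$-linear evaluation map $\LiA\colon \Sw\to K_\infty$. By Theorem \ref{thm_MZV=CMPLV} its image is exactly $\cZ_w^{(\bbF_q)}$, and by definition its kernel is $\Rw$. Since $\dim_{\bbF_q}\Sw=\#\cI_w=2^{w-1}$, rank--nullity yields $\dim_{\bbF_q}\cZ_w^{(\bbF_q)}=2^{w-1}-(2^{w-1}-d_w)=d_w$.

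For the basis statement, I would use the elements $\bQ_\fs\in\Qw=\Rw$ constructed in Proposition \ref{prop_half}, one for each $\fs\in\cM_w$, with $s(\bQ_\fs)=\fs$. By Lemma \ref{lem_min_tuple_imply_dim_lower_bound} these are linearly independent. There are $\#\cM_w=2^{w-1}-d_w=\dim\Rw$ of them (Lemma \ref{lem_cardM}), so they form a basis of $\Rw$.

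Spanning of the proposed basis then comes from a triangular reduction. Take any tuple $\fs\in\cM_w$. Since $\bQ_\fs=\fs+{}$(tuples $\succ\fs$) up to a nonzero scalar (the leading coefficient is $1$ by Remark \ref{rmk_HL}), we get $\LiA(\fs)\in\Span\{\LiA(\ft)\mid \ft\succ\fs\}$. Descending induction along $\prec$ on the finite set $\cI_w$ then shows that every $\LiA(\fs)$ lies in $\Span\{\LiA(\ft)\mid\ft\in\cI_w\setminus\cM\}$. In more detail: handle the largest element of $\cM_w$ first, whose $\bQ$ involves only tuples at or above it; each tuple above it is either outside $\cM$ or already reduced. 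Hence $\{\LiA(\fs)\mid \fs\in\cB_w'\}$ spans $\cZ_w^{(\bbF_q)}$. It has exactly $2^{w-1}-\#\cM_w=d_w$ elements, so it is a basis by the dimension count.

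The main obstacle would be bookkeeping rather than substance. I would need to make sure the leading coefficient of each $\bQ_\fs$ at $\fs$ is nonzero in $\bbF_q$; it is $1$ by Remark \ref{rmk_HL}, and the differences used in Proposition \ref{prop_half} keep it equal to $1$. I would also need the induction to be well-founded, which holds since $\cI_w$ is finite and totally ordered by $\prec$. Everything else is assembled from earlier results.
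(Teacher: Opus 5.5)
Your proof is correct and follows the paper's argument. Both use rank--nullity (equivalently, the isomorphism $\Sw/\Qw\cong\cZ_w^{(\bbF_q)}$) together with $\Rw=\Qw$ and $\dim_{\bbF_q}\Qw=2^{w-1}-d_w$ from Propositions \ref{prop_half} and \ref{prop_other_half}, and then a triangular reduction with the elements $\bQ_\fs$ and a cardinality count.

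The differences are cosmetic. You express each $\LiA(\fs)$ by descending induction, whereas the paper clears the coefficients of a coset representative in ascending order; both are valid. Your worry about the leading coefficient is unnecessary, since $s(\bQ_\fs)=\fs$ already means by definition that $\fs$ occurs in $\bQ_\fs$ with a nonzero coefficient.
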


\begin{proof}
Consider the linear map 
\begin{align*}
\LiA \colon\ \Sw &\rightarrow K_\infty \\
\sum_{\fs \in \cI_w} c_{\fs} \cdot \fs &\mapsto  \sum_{\fs \in \cI_w} c_{\fs} \LiA(\fs).
\end{align*}
By Theorem \ref{thm_MZV=CMPLV}, its image is $\cZ_{w}^{(\bbF_q)}$.
Its kernel is $\Rw$ by definition.
Note that we have $\Rw =\Qw$ by Theorem \ref{thm_R2}.
Therefore, $\LiA$ induces an isomorphism $\overline{\LiA}$ between two $\bbF_q$-linear spaces:
\begin{align*}
\overline{\LiA} \colon\ \Sw/\Qw &\overset{\sim}{\rightarrow}  \cZ_{w}^{(\bbF_q)} \\
\sum_{\fs \in \cI_w} c_{\fs} \cdot \fs + \Qw &\mapsto  \sum_{\fs \in \cI_w} c_{\fs} \LiA(\fs).
\end{align*}
By Propositions \ref{prop_half} and \ref{prop_other_half}, we have $\dim_{\bbF_q}\Qw = 2^{w-1}-d_w$.
Since $\dim_{\bbF_q} \Sw = \#\cI_w = 2^{w-1}$, we obtain
\begin{equation}\label{eqn_9.1}
\dim_{\bbF_q} \cZ_w^{(\bbF_q)} = \dim_{\bbF_q} \Sw/\Qw = d_w.
\end{equation}

Moreover, for any $\fs \in \cM_w$, we have explicitly constructed an element $\bQ_{\fs} \in \Qw$ such that $s(\bQ_\fs)=\fs$ in Proposition \ref{prop_half}.
For any coset $\sum_{\fs \in \cI_w} c_{\fs} \cdot \fs + \Qw$ in the quotient space $\Sw/\Qw$, we can repeatedly add some $\bbF_q$-multiple of $\bQ_\fs$ to make the coefficient $c_{\fs}$ in the representative $\sum_{\fs \in \cI_w} c_{\fs} \cdot \fs$ become zero, from the minimal $\fs$ in $\cM_w$ to the maximal $\fs$ in $\cM_w$ (with respect to the (depth, lex)-order). 
Thus,
\[
\Sw/\Qw \subset \Span_{\bbF_q} \left\{ \fs + \Qw  ~\Big|~ \fs \in \cB_w^{\prime} \right\}.
\]
By Lemma \ref{lem_cardM} and \eqref{eqn_9.1}, the above inclusion must be an equality, and therefore 
\[
\left\{ \fs + \Qw  \mid \fs \in \cB_w^{\prime} \right\}
\]
is a basis of $\Sw/\Qw$.
Applying the isomorphism $\overline{\LiA}$, we conclude that 
\[
\left\{ \LiA(\fs)  \mid \fs \in \cB_w^{\prime} \right\}
\]
is a basis of $\cZ_{w}^{(\bbF_q)}$.
\end{proof}

\begin{remark}
Note that $\cM$ is explicitly given in Definition \ref{def_M}. 
Therefore, $\cB_w^{\prime} =\cI_w \setminus \cM$ is explicit.
In fact, we can write $\cB_w^{\prime}$ as a disjoint union of seven sets, such that each set can be described in one line.
We do not need to present the explicit description of $\cB_w^{\prime}$ here for the following reason.
After we found the basis $\left\{ \LiA(\fs)  \mid \fs \in \cB_w^{\prime} \right\}$ of $\cZ_{w}^{(\bbF_q)}$, we asked ChatGPT if there exists a more elegant basis.
ChatGPT (GPT-5.6 Sol, OpenAI) returned a positive answer; that is, the basis given in Theorem \ref{thm_Z}. 
We present it in the next subsection.
\end{remark}

\subsection{An elegant basis}

The following theorem is proved by ChatGPT (GPT-5.6 Sol, OpenAI). 
It has been digested and rewritten by the authors.

\begin{theorem}\label{thm_Z2}
$\left\{ \LiA^{\star}(\fs) ~\big|~ \fs \in \cB_w  \right\}$ is an $\bbF_q$-basis of $\cZ_w^{(\bbF_q)}$, where $\cB_w = \cB \cap \cI_w$ and
\[
\cB = [q]^{\bullet} \bigsqcup \left\{ (\fh,m,\ft) \in \cI ~\Big|~ \fh,\ft \in [q]^{\bullet}, m \in [2q] \setminus [q] \right\}.
\]
\end{theorem}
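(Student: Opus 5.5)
We need $\#\cB_w = d_w$ together with a spanning (or independence) argument. For the count: $[q]^\bullet$ has generating function $\frac{1-x}{1-2x+x^{q+1}}$, and the middle part $m\in[2q]\setminus[q]$ contributes $x^{q+1}\frac{1-x^q}{1-x}$. Hence
\[
\sum_{w}\#\cB_w x^w=\frac{1-x}{1-2x+x^{q+1}}+\frac{(1-x)x^{q+1}(1-x^q)}{(1-2x+x^{q+1})^2},
\]
and, after subtracting the constant term coming from $\emptyset$, this should equal $\frac{x(1-x^q)(1-2x+x^q)}{(1-2x+x^{q+1})^2}$, which is a routine rational-function check. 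Since $\dim_{\bbF_q}\cZ_w^{(\bbF_q)}=d_w$ by Theorem \ref{thm_Z1}, it then suffices to prove that $\{\LiAS(\fs)\mid \fs\in\cB_w\}$ spans $\cZ_w^{(\bbF_q)}$.

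By Lemma \ref{lem_Z}, $\LiAS(\fs)=\LiA(\sfZ\fs)$, so the claim becomes $\Sw=\Qw+\Span_{\bbF_q}\{\sfZ\fs\mid\fs\in\cB_w\}$. It is convenient to pull everything back by $\sfZ^{-1}$: we must show that every tuple $\fs\in\cI_w\setminus\cB_w$ is congruent modulo $\sfZ^{-1}\Qw$ to a combination of tuples that are ``larger'' in a suitable order, with $\cB_w$ as the terminal set. Tuples outside $\cB$ are of two kinds: those with some entry $>2q$, and those with at least two entries $>q$. For each such $\fs$ I would produce an element of $\sfZ^{-1}\Qw$ whose extremal tuple is $\fs$, following the strategy of Proposition \ref{prop_half}. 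The tools are Lemma \ref{lem_Ld}, which gives $\sfZ^{-1}\sfL\sfZ=\sfLd$, and the identity \eqref{eqn_8.10}, which gives $\sfZ^{-1}(\sfE\sfZ\fa,\cdot)$. With these, Example \ref{lem_compact_form_of_Q} transports to a compact star-form of $\sfZ^{-1}\bQ(\fh,\boxed{{}^{q+}\sfZ\fu},\boxed{{}^{q+}\sfZ\fv})$ expressed through $\sfLd$. Its leading term, in the (depth,$\overleftarrow{\text{lex}}$)-order $\triangleleft$ as in Lemma \ref{lem_J}, should be $(\fh,{}^{q+}\fu,{}^{q+}\fv)$ or a close variant. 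Taking $\fh\in[q]^\bullet$ (Lemma \ref{lem_ini_nor}) and pairing the first two entries exceeding $q$, or splitting a single entry $>2q$ as ${}^{q+}$ applied to ${}^{q+}$, should assign to each $\fs\notin\cB$ a relation with leading tuple $\fs$.

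With these relations in hand, Lemma \ref{lem_min_tuple_imply_dim_lower_bound} shows that the $\#(\cI_w\setminus\cB_w)=2^{w-1}-d_w$ relations are independent. Hence they form a basis of $\sfZ^{-1}\Qw$, whose dimension is exactly $2^{w-1}-d_w$ by Propositions \ref{prop_half} and \ref{prop_other_half}. The triangular reduction used in the proof of Theorem \ref{thm_Z1} then shows that the cosets of $\cB_w$ span $\Sw/\sfZ^{-1}\Qw$, and applying $\overline{\LiA}\circ\sfZ$ finishes the proof.

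The main obstacle is the leading-term bookkeeping. In the starred picture the $\sfLd$ terms have signs and sticky contributions, and the leading tuples must be pinned down exactly and shown to realize every non-$\cB$ tuple bijectively. This is especially delicate for tuples with a single entry $>2q$, and for boundary cases such as $\fv=\emptyset$ and the $r_2=0$ relations. I would first test small cases ($q=2$, $w\le 8$) to pick the correct order ($\prec$ versus $\triangleleft$) and the correct choice of $\fu,\fv$.
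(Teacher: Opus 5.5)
Your framework is the paper's: the count $\#\cB_w=d_w$ (your generating function is right), the passage to $\Sw/\sfZ^{-1}\Qw$, one element of $\sfZ^{-1}\Qw$ with prescribed $\triangleleft$-minimal tuple for each $\fs\in\cI_w\setminus\cB_w$, and the triangular reduction closed off by $\dim_{\bbF_q}\Qw=2^{w-1}-d_w$. The gap is the one step with real content, the determination of these minimal tuples. You leave it as a hope, and the shape you predict is false.

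Take $\fh=\fv=\emptyset$, $\fu=(1)$, i.e.\ $\bQ(\boxed{q+1},\boxed{q})$. Applying $\sfZ^{-1}$ to its nine tuples, the fully merged tuple $(2q+1)$ receives coefficient $-1+1+1-1+1-1+1-1+1=1$. So the $\triangleleft$-minimal tuple is $(2q+1)$, not $(q+1,q)$. Conversely, tuples outside $\cB$ such as $(2q+1)$ or $(2q+1,1)$ never have your shape $(\fh,{}^{q+}\fu,{}^{q+}\fv)$. This is exactly the single-entry-$>2q$ difficulty you anticipated.

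The first slot also has to be $\sfZ\fh$, not $\fh$. With a bare $\fh$, $\sfZ^{-1}$ coarsens the prefix too, and the leading tuple begins with the single entry $\wt(\fh)$ instead of $\fh$. Ranging over $\fh\in[q]^{\bullet}$ would then miss tuples such as $(1,1,2q+1)$.

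The missing ingredient is Lemma \ref{lem_last}:
\[
s_{\triangleleft}\Big(\sfZ^{-1}\bQ\big(\sfZ\fh,\boxed{{}^{q+}\sfZ\fu},\boxed{{}^{q+}\sfZ\fv}\big)\Big)=\big(\fh,{}^{q+}\fu^{+q},\fv\big)\qquad(\fh,\fv\in\cI,\ \fu\in\cI_{>0}).
\]
So both added $q$'s land on the two ends of $\fu$, not at the fronts of $\fu$ and $\fv$. To prove it, rewrite the compact form of Example \ref{lem_compact_form_of_Q} using $\sfE\sfZ\bA=\sfZ(\bA,1)$, $\sfC\sfZ\bA=\sfZ(q,\bA)$, $(\sfZ\bA,\sfZ\bB)=\sfZ\big((\bA,\bB)-\bA\boxplus\bB\big)$ and $\sfL\sfZ=\sfZ\sfLd$. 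This gives an explicit eight-term formula for $\sfZ^{-1}\bQ$. Its depth-minimal tuples come from the sticky part of $-(\fh,{}^{+}\sfLd(\fu^{+q},\fv))$:
\begin{itemize}
\item the $+q$ at the end of $\fu$ is the term $\fu\boxplus(q,\fv)=(\fu^{+q},\fv)$, produced when $\sfZ\fu$ is concatenated with $\sfC\sfZ\fv=\sfZ(q,\fv)$;
\item the $q$ at the front is the $q-1$ of $\sfLd$ placed on the first entry (the reverse-lex smallest placement), plus the $1$ appended by $\sfE$.
\end{itemize}
The only other term of that depth, $(\fh\boxplus(q,\fu),{}^{+}\sfLd\fv)$, loses because $\triangleleft$ compares from the right.

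This one formula covers both kinds of tuples outside $\cB$. If $\dep\fu=1$ it yields an entry $u_1+2q>2q$; if $\dep\fu\ge 2$ it yields two entries $>q$ enclosing $\fu$. Hence every $\fs\in\cI_w\setminus\cB_w$ is a leading tuple; pick one preimage, since the map is not injective. You need no restriction $\fh\in[q]^{\bullet}$, no Lemma \ref{lem_ini_nor}, and no separate treatment of $\fv=\emptyset$, because ${}^{q+}\emptyset=(q)$ covers the $\boxed{q}$ relations. With this lemma in place, the rest of your argument goes through.
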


We need a last lemma. 
Recall that, for any $\bS \in \mathscr{S}^{(\bbF_q)} \setminus \{\bzero\}$, we denote by $s_{\triangleleft}(\bS)$ the minimal tuple appearing in $\bS$ with respect to the (\text{depth}, $\overleftarrow{\text{lex}})$-order $\triangleleft$.

\begin{lemma}\label{lem_last}
For any $\fh, \ft \in \cI$ and $\fm \in \cI_{>0}$, we have
\[
s_{\triangleleft}\Big( \sfZ^{-1}\bQ\left( \sfZ\fh, \boxed{{}^{q+}\sfZ\fm}, \boxed{{}^{q+}\sfZ\ft} \right) \Big) = \left( \fh, {}^{q+}\fm^{+q}, \ft \right).
\]
\end{lemma}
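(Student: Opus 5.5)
The plan is to compute $\sfZ^{-1}\bQ\left( \sfZ\fh, \boxed{{}^{q+}\sfZ\fm}, \boxed{{}^{q+}\sfZ\ft} \right)$ in closed form in the tuple basis and then read off its $\triangleleft$-minimal tuple. I would work throughout with the compact form of Example \ref{lem_compact_form_of_Q}:
\[
\bQ\left( \bH, \boxed{{}^{q+}\bM},\boxed{{}^{q+}\bT} \right) = \Big( \sfE(\bH,\sfC\bM), \sfL\bT \Big) - \Big( \sfE\bH,\sfL(\bM,\sfC\bT) \Big),
\]
taking $\bH=\sfZ\fh$, $\bM=\sfZ\fm$ and $\bT=\sfZ\ft$.

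The first step is to commute every operator past $\sfZ$, using the identities already established in the paper:
\begin{itemize}
\item $\sfZ(\fa,\fb)=(\sfZ\fa,\sfZ\fb)+\sfZ(\fa\boxplus\fb)$, which is \eqref{eqn_Z(a,b)};
\item $\sfE\sfZ\fa=\sfZ(\fa,1)$, the case $a=1$ of \eqref{eqn_Ea1};
\item $\sfL\sfZ=\sfZ\sfLd$, from Lemma \ref{lem_Ld};
\item $\sfC\sfZ\fs=\sfZ(q,\fs)$, which follows from \eqref{eqn_Z(a,b)} applied to $(q,\fs)$.
\end{itemize}
With these, $(\sfZ\fh,\sfC\sfZ\fm)=\sfZ(\fh,q,\fm)-\sfZ(\fh\boxplus(q,\fm))$. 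Applying $\sfE$ converts each piece into $\sfZ(\cdot,1)$. Similarly, $\sfL(\sfZ\fm,\sfC\sfZ\ft)$ becomes $\sfZ\sfLd(\fm,q,\ft)$ minus a $\boxplus$-correction. Finally, one more use of \eqref{eqn_Z(a,b)} in the form $(\sfZ\fa,\sfZ\fb)=\sfZ(\fa,\fb)-\sfZ(\fa\boxplus\fb)$ absorbs the outer concatenations. The outcome is an explicit formula for $\sfZ^{-1}\bQ$ as a signed sum of a bounded number of families. Typical families are $(\fh,q,\fm,1,\sfLd\ft)$, $(\fh,q,\fm,{}^{+}\sfLd\ft)$, $(\fh,1,\sfLd(\fm,q,\ft))$ and $(\fh,{}^{+}\sfLd(\fm,q,\ft))$, together with their $\boxplus$-merged analogues, in which $\fh$ is glued to $q$ and $\fm$ to $q$. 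The cases $\fh=\emptyset$ and $\ft=\emptyset$ need separate handling, since then $\sfE\emptyset=(1)$ and $\sfLd\emptyset=(q-1)$.

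The second step is to sort these families by depth. Recall that $\sfLd\fs=(q-1)\shuffle\fs-(q-1)\widetilde{*}\fs$. The shuffle part raises depth by one and the sticky part preserves depth, so the lowest-depth contributions come only from the sticky parts and from the $\boxplus$-merged families. The target tuple $(\fh,{}^{q+}\fm^{+q},\ft)$ has depth $\dep\fh+\dep\fm+\dep\ft$. In it, $q$ is added to the first entry of $\fm$ (a merge of the inserted $q$ into $\fm$) and $q=1+(q-1)$ is added to the last entry of $\fm$ (a sticky $(q-1)$ landing on the last entry of $\fm$, plus the $+1$ from $\sfE$). I therefore expect all tuples of depth below $\dep\fh+\dep\fm+\dep\ft$ to be absent. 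At exactly that depth, the sticky terms in which $(q-1)$ lands elsewhere should cancel in pairs between the two halves of $\bQ$; this mirrors the cancellation $\bS_1=\bzero$ in Lemma \ref{lem_gIndQ}.

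The third step is to compare the surviving tuples of that depth in reverse lexicographic order. Reading from the right, $\ft$ is untouched only in the families where nothing was added to $\ft$. Among those, the last entry of the $\fm$-block is minimal exactly when the extra $q$ sits there, and so on leftward. This should single out $(\fh,{}^{q+}\fm^{+q},\ft)$ with coefficient $\pm1$.

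The main obstacle is the cancellation bookkeeping in the second step. Many terms of equal depth arise, with signs coming both from $\sfZ^{-1}$ and from $\sfLd$. I would first verify the claim on small cases, for instance $\dep\fh,\dep\fm,\dep\ft\le 1$, to pin down which families survive. I would then organize the general computation by the position where the $(q-1)$ sticks, namely in $\fm$, at the inserted $q$, or in $\ft$, and pair terms accordingly.
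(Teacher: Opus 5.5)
Your route is the paper's. You rewrite $\bQ$ in the compact form of Example \ref{lem_compact_form_of_Q}, push $\sfZ$ outward using \eqref{eqn_Z(a,b)}, $\sfE\sfZ\fa=\sfZ(\fa,1)$, $\sfL\sfZ=\sfZ\sfLd$ and $\sfC\sfZ\fs=\sfZ(q,\fs)$, apply $\sfZ^{-1}$, and read off the $\triangleleft$-minimal tuple. Carried out, this gives exactly eight terms:
\begin{align*}
&(\fh,q,\fm,1,\sfLd\ft) - (\fh,q,\fm,{}^{+}\sfLd\ft) - (\fh\boxplus(q,\fm),1,\sfLd\ft) + (\fh\boxplus(q,\fm),{}^{+}\sfLd\ft)\\
&\quad - (\fh,1,\sfLd(\fm,q,\ft)) + (\fh,{}^{+}\sfLd(\fm,q,\ft)) + (\fh,1,\sfLd(\fm^{+q},\ft)) - (\fh,{}^{+}\sfLd(\fm^{+q},\ft)).
\end{align*}

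Your second step, however, predicts the wrong mechanism, and the ``cancellation bookkeeping'' you single out as the main obstacle does not exist. Write $\fh=(h_1,\ldots,h_k)$ and $\fm=(m_1,\ldots,m_j)$. At the minimal depth $\dep\fh+\dep\fm+\dep\ft$, only two families contribute, through their sticky parts. These are $(\fh\boxplus(q,\fm),{}^{+}\sfLd\ft)$, which reaches this depth only if $\fh\neq\emptyset$ and $\ft\neq\emptyset$, and $-(\fh,{}^{+}\sfLd(\fm^{+q},\ft))$. Nothing cancels between them: they never share a tuple, since the entry in position $k$ is $h_k+q$ in the first family and $h_k$ in the second.

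Your attribution of the target is also reversed when $j\ge 2$. The $+q$ on the last entry $m_j$ is the $q$ that $\sfC$ inserts in front of $\ft$, merged through $\fm\boxplus(q,\ft)=(\fm^{+q},\ft)$. The $+q$ on $m_1$ is $1+(q-1)$: the $1$ comes from $\sfE\sfZ\fh=\sfZ(\fh,1)$ via $(\fh,1)\boxplus(\cdot)$, and the $q-1$ is the sticky part placed at the leftmost position. The $q$ that $\sfC$ inserts in front of $\fm$ never merges into $\fm$; in the $\sfZ^{-1}$-picture it stays separate or merges into $h_k$. Following your heuristic, a sticky $q-1$ on $m_j$ produces $(\fh,m_1+1,m_2,\ldots,m_j+2q-1,\ft)$, which is not minimal.

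With the correct picture the argument is short and needs no pairing:
\begin{itemize}
\item In $-(\fh,{}^{+}\sfLd(\fm^{+q},\ft))$ the target appears with coefficient $+1$. Every other sticky placement is $\triangleleft$-larger, because its $q-1$ sits further to the right.
\item Every same-depth tuple of $(\fh\boxplus(q,\fm),{}^{+}\sfLd\ft)$ has its $\ft$-block enlarged, so it is $\triangleleft$-larger when compared from the right.
\item If $\ft=\emptyset$ that family has larger depth, and if $\fh=\emptyset$ it is $\bzero$.
\item All remaining families have strictly larger depth.
\end{itemize}
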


\begin{proof}
We first claim the following identities: For any $\bA,\bB \in \mathscr{S}^{(\bbF_q)}$, we have
\begin{align}
(\sfZ\bA,\sfZ\bB) &= \sfZ\Big( \left(\bA,\bB\right) - \bA \boxplus \bB  \Big), \label{921} \\
\sfE\sfZ\bA &=\sfZ(\bA,1), \label{922} \\
\sfC\sfZ\bA &= \sfZ(q,\bA). \label{923}
\end{align}
In fact, Equation \eqref{921} follows from \eqref{eqn_Z(a,b)}.
Equation \eqref{922} follows from the special case $a=1$ of \eqref{eqn_Ea1}.
Recall that by our definition, $q \boxplus \emptyset = \bzero$ and ${}^{q+} \emptyset = (q)$, so $\sfC\fs = (q,\fs) + q\boxplus\fs$ for any $\fs \in \cI$, including $\fs = \emptyset$. 
By \eqref{eqn_Z(a,b)}, we have
\[
\sfZ(q,\bA) = (q,\sfZ\bA) + \sfZ(q\boxplus\bA).
\]
For $\fa = \emptyset$ or $\dep(\fa)=1$, one checks directly $\sfZ(q\boxplus\fa)=q\boxplus\sfZ\fa$.
For $\dep(\fa) \ge 2$, by Definition \ref{def_Z_operator}, we have $\left\{ q \boxplus \ft \mid \ft \in \operatorname{Coars}(\fa) \right\} = \operatorname{Coars}(q \boxplus \fa)$. 
Therefore, $\sfZ(q\boxplus\bA) = q \boxplus \sfZ\bA$. 
Thus,
\[
\sfZ(q,\bA) = (q,\sfZ\bA) + q \boxplus \sfZ\bA = \sfC\sfZ\bA,
\]
which proves the last claim \eqref{923}.

Now, by Example \ref{lem_compact_form_of_Q}, we have
\begin{equation}\label{924}
\bQ\left( \sfZ\fh, \boxed{{}^{q+}\sfZ\fm},\boxed{{}^{q+}\sfZ\ft} \right) = \Big( \sfE(\sfZ\fh,\sfC\sfZ\fm), \sfL\sfZ\ft \Big) - \Big( \sfE\sfZ\fh,\sfL(\sfZ\fm,\sfC\sfZ\ft) \Big).
\end{equation}
Using the identities \eqref{921}--\eqref{923}, and the identity $\sfL\sfZ=\sfZ\sfLd$ in Lemma \ref{lem_Ld}, we successively extract the operator $\sfZ$ on the right-hand side of \eqref{924} to the outermost layer.
Then, we apply the operator $\sfZ^{-1}$.
In this way, we obtain
\begin{align*}
&~ \sfZ^{-1}\bQ\left( \sfZ\fh, \boxed{{}^{q+}\sfZ\fm},\boxed{{}^{q+}\sfZ\ft} \right)  \\
=&~ (\fh,q,\fm,1,\sfLd\ft) - (\fh,q,\fm,{}^{+}\sfLd\ft) -(\fh\boxplus(q,\fm),1,\sfLd\ft) + (\fh\boxplus(q,\fm),{}^{+}\sfLd\ft) \\
&~ -(\fh,1,\sfLd(\fm,q,\ft)) + (\fh,{}^{+}\sfLd(\fm,q,\ft))  +(\fh,1,\sfLd(\fm^{+q},\ft)) - (\fh,{}^{+}\sfLd(\fm^{+q},\ft)).
\end{align*}
The $\triangleleft$-minimal tuple appearing on the right-hand side of the above equation is $\left( \fh, {}^{q+}\fm^{+q}, \ft \right)$.
Indeed, it is the $\triangleleft$-minimal tuple appearing in the last term $(\fh,{}^{+}\sfLd(\fm^{+q},\ft))$.
Tuples appearing in other terms except for $(\fh\boxplus(q,\fm),{}^{+}\sfLd\ft)$ have greater depth.
If $\fh = \emptyset$, then $(\fh\boxplus(q,\fm),{}^{+}\sfLd\ft) = \bzero$; that is, no tuple appears.
We may assume $\fh \neq \emptyset$ below.
If $\ft = \emptyset$, then the tuple $(\fh\boxplus(q,\fm),{}^{+}\sfLd\ft)$ also has greater depth.
If $\ft \neq \emptyset$, then any tuple appearing in $(\fh\boxplus(q,\fm),{}^{+}\sfLd\ft)$ with the same depth as $\left( \fh, {}^{q+}\fm^{+q}, \ft \right)$ is $\triangleleft$-greater, because we compare entries from right to left.
\end{proof}

Now, we prove Theorem \ref{thm_Z2}.
\begin{proof}[Proof of Theorem \ref{thm_Z2}]
As in the proof of Theorem \ref{thm_Z1}, the induced linear operator $\overline{\LiA}$ is an isomorphism from $\Sw/\Qw$ to $\cZ_w^{(\bbF_q)}$.
Recall $\LiA(\sfZ\fs) = \LiAS(\fs)$ (see Lemma \ref{lem_Z}).
Therefore, it suffices to prove that 
\[
\left\{ \sfZ\fs + \Qw  \mid \fs \in \cB_w  \right\}
\]
is a basis of the quotient space $\Sw/\Qw$.

Using Lemma \ref{lem_last} and the minimal-tuple argument in the proof of Theorem \ref{thm_Z1}, we obtain
\begin{equation}\label{999}
\Sw/\sfZ^{-1}\Qw \subset \Span_{\bbF_q}\left\{ \fs + \sfZ^{-1}\Qw ~\Big|~ \fs \in \cI_w \setminus\{ \left( \fh, {}^{q+}\fm^{+q}, \ft \right) \mid \fh,\ft \in \cI, \fm \in \cI_{>0}  \}  \right\}.
\end{equation}
It is easy to see
\[
\cI_w \setminus\{ \left( \fh, {}^{q+}\fm^{+q}, \ft \right) \mid \fh,\ft \in \cI, \fm \in \cI_{>0}  \} = \cB_w
\]
and
\begin{align*}
\sum_{w=0}^{\infty} \#\cB_w x^w &= \frac{1-x}{1-2x+x^{q+1}} + \frac{1-x}{1-2x+x^{q+1}} \cdot (x^{q+1}+\cdots+x^{2q}) \cdot \frac{1-x}{1-2x+x^{q+1}}  \\
&= 1 + \frac{x(1-x^q)(1-2x+x^q)}{(1-2x+x^{q+1})^2}.
\end{align*}
So $\#\cB_w = d_w$ for $w \ge 1$.
Since $\dim_{\bbF_q}\Qw = 2^{w-1}-d_w$ by Propositions \ref{prop_half} and \ref{prop_other_half}, the inclusion in \eqref{999} must be an identity and 
\[
\Sw = \sfZ^{-1}\Qw \bigoplus \Span_{\bbF_q}\left\{ \fs \mid \fs \in \cB_w  \right\}.
\]
Applying the $\sfZ$ operator, we obtain
\[
\Sw = \Qw \bigoplus \Span_{\bbF_q}\left\{ \sfZ\fs \mid \fs \in \cB_w  \right\},
\]
which completes the proof.
\end{proof}

Theorems \ref{thm_Z1} and \ref{thm_Z2} together prove Theorem \ref{thm_Z}.

\vspace*{3mm}
\begin{flushright}
\begin{minipage}{148mm}\sc\footnotesize
J.\,Hu: School of Mathematical Sciences, Fudan University, Shanghai, China \\
{\it E-mail address}: \href{mailto:jinyuanhu2004@gmail.com}{{\tt jinyuanhu2004@gmail.com}}
\vspace*{3mm}
\end{minipage}
\end{flushright}

\begin{flushright}
\begin{minipage}{148mm}\sc\footnotesize
H.\,Huang: School of Mathematical Sciences, Peking University, Beijing, China  \\
{\it E-mail addresses}: \href{mailto:hanqing2007@gmail.com}{{\tt hanqing2007@gmail.com}}, \href{mailto:hanqing2007@stu.pku.edu.cn}{{\tt hanqing2007@stu.pku.edu.cn}} 
\vspace*{3mm}
\end{minipage}
\end{flushright}

\begin{flushright}
\begin{minipage}{148mm}\sc\footnotesize
L.\,Lai: School of Mathematical Sciences, Xiamen University, Fujian, China \\
{\it E-mail addresses}: \href{mailto:lilaimath@gmail.com}{{\tt lilaimath@gmail.com}}, \href{mailto:lilai@xmu.edu.cn}{{\tt lilai@xmu.edu.cn}} 
\vspace*{3mm}
\end{minipage}
\end{flushright}

\begin{flushright}
\begin{minipage}{148mm}\sc\footnotesize
K.\,Li: School of Mathematical Sciences, Peking University, Beijing, China\\
{\it E-mail addresses}: \href{mailto:likunyue2004math@163.com}{{\tt likunyue2004math@163.com}}, \href{mailto:2601110011@stu.pku.edu.cn}{{\tt 2601110011@stu.pku.edu.cn}}
\vspace*{3mm}
\end{minipage}
\end{flushright}

\end{document}